\def\beq{ \begin{equation} }
\def\eeq{ \end{equation} }
\def\beqx{ \begin{equation*} }
\def\eeqx{ \end{equation*} }
\def\beqa{\begin{eqnarray}}
\def\eeqa{\end{eqnarray}}
\def\beqax{\begin{eqnarray*}}
\def\eeqax{\end{eqnarray*}}
\def\mn{\medskip\noindent}
\def\eopt{\hfill$\square$}
\numberwithin{equation}{section}
\newtheorem{theorem}{Theorem}
\newtheorem{lemma}{Lemma}[section]
\newtheorem{proposition}{Proposition}[section]
\newtheorem{corollary}{Corollary}
\newcommand{\ep}{\varepsilon}
\newcommand{\n}{\noindent}
\newcommand{\R}{\mathbb{R}}
\begin{document}

\title{Evolution in predator-prey systems}
\author{Rick Durrett\fnref{fn1}}
\ead{rtd1@cornell.edu}
\author{John Mayberry\corref{cor1}\fnref{fn2}}
\ead{jm858@cornell.edu}
\address{Department of Mathematics, Cornell University, Ithaca, New York 14853, USA}

%\date{\today}
\cortext[cor1]{Corresponding author: Tel.: +1 607 255 8262;  fax: +1 607 255 7149}
\fntext[fn1]{Partially supported by NSF grant DMS 0704996 from the probability program.}
\fntext[fn2]{Partially supported by NSF RTG grant DMS 0739164.}

\begin{abstract}
We study the adaptive dynamics of predator-prey systems modeled by a dynamical system in which the traits of predators and prey are allowed to evolve by small mutations. When only the prey are allowed to evolve, and the size of the mutational change tends to 0, the system does not exhibit long term prey coexistence and the trait of the resident prey type converges to the solution of an ODE. When only the predators are allowed to evolve, coexistence of predators occurs. In this case, depending on the parameters being varied, we see (i) the number of coexisting predators remains tight and the differences in traits from a reference species converge in distribution to a limit, or (ii) the number of coexisting predators tends to infinity, and we calculate the asymptotic rate at which the traits of the least and most ``fit'' predators in the population increase. This last result is obtained by comparison with a branching random walk killed to the left of a linear boundary and a finite branching-selection particle system.
\end{abstract}
\begin{keyword}
predator-prey \sep adaptive dynamics \sep coexistence \sep Lotka-Volterra equations  \sep branching random walk \sep branching-selection particle system

\MSC \, primary 92D15 \sep 92D25; \,  secondary 60J60 \sep 60K35
\end{keyword}
\maketitle
\section{Introduction} \label{intro}
The rapidly developing field of adaptive dynamics emphasizes the combined effects of evolution and ecological interactions on population dynamics. To describe the general framework of this theory, consider a population of individuals, each attached with a trait or strategy $x$ that characterizes its ability to survive and propagate. The current distribution of traits governs the population dynamics by describing interactions between different individuals and their environment. Underlying this ecological process, is a slower mutational process that occasionally introduces a new trait into the population. Understanding the interplay of these fast ecological and slow mutational processes is the primary objective of adaptive dynamics. Foundations of the theory were laid in early 1990's by Hofbauer and Sigmund \cite{HSA}, Metz, Nisbet, and Geritz \cite{MNG}, and Dieckmann, Marrow, and Law \cite{DML} and focused on macroscopic models, i.e., ODE models describing large population limits. In their 1996 paper \cite{DL}, Dieckmann and Law suggest that: ``A proper mathematical theory of evolution should be dynamical...The dynamics ought to be underpinned by a microscopic theory''. A rigorous foundation for microscopic models via multi-type branching processes has now been developed (see for example Champagnat and Lambert \cite{CL} and Champagnat, Ferri\`ere, and M\'el\'eard \cite{CFM}).

In this paper, we study the dynamics of coexistence that arise as a consequence of introducing rare, small mutations into a model for predator-prey interactions. The novelty of our work lies in the establishment of coexistence of a large number of types. This phenomenon is known as polymorphic evolution. Two other notable examples of polymorphic evolution in the adaptive dynamics literature are: (i) evolutionary branching (first described in Geritz et al.~\cite{GM} and more recently studied in Champagnat and M\'el\'eard \cite{CM} from a microscopic perspective) which describes coexistence of types with diverging traits and (ii) the Tube Theorem of Geritz et al.~\cite{GK}, \cite{Ger} where coexistence of a resident and invading type with similar survival strategies occurs inside of a ``tube'' in which the sum of the invader and resident population sizes stays close to the former resident attractor. The situation we encounter more closely resembles the second scenario: types with very similar traits can coexist. In our model, this is due to the fact that interspecific competition (competition with individuals of different types) has less of an effect than intra-specific competition (competition with individuals of the same type).

%Second, evolutionary branching has only been studied in models which incorporate competitive interactions (species competing for resources), but not predator-prey type interactions. The difference, is of course, that in competitive models, each species is hurt by the presence of other types whereas in predator-prey models, predators are helped by the presence of more prey. Results on dimorphism are similar to our results in the first respect (coexistence of similar types), but still differ in respect to the second (including predator-prey interactions). Our model provides the first example of coexistence in an adaptive dynamics type model with predator-prey interactions.

%the context of a population of types competing for the resources of an environment without

Since our focus will be on the dynamics of the random process of types that emerges from our underlying mutational process, we shall take a macroscopic perspective of population dynamics, using a Lotka-Volterra system of ODE's to describe predator-prey interactions. In particular, we will suppose that if we have $M$ prey types and $N$ predator types, then the densities are governed by the ODE's
\begin{eqnarray}
\frac{du_i}{dt} & =& u_i\left( \beta_i (1-\sum_k u_k) - 1 - \sum_j \alpha_{i,j} v_j \right) \nonumber \\
\frac{dv_j}{dt} & =& v_j\left( \sum_i \alpha_{i,j} u_i - \delta_j -  v_j \right) \label{LV}
\end{eqnarray}
where the $u_i$, $1\le i \le M$ are the densities of the prey, and the $v_j$, $1 \le j \le N$ are the densities of the predators. Our main interest is the effect of small mutations in the resident types on the equilibrium behavior of (\ref{LV}). While the co-evolutionary case in which both predator and prey are allowed to vary is certainly of interest and can lead to exotic behavior (see, for example, Dieckmann et al.~\cite{DML} and Dercole et al.~\cite{DIR}), we will here only consider the two cases of fixed predator/evolving prey and evolving predator/ fixed prey. Such examples are also of interest and have been studied in laboratory experiments (see, for example, Jones and Ellner \cite{JE}).

Following the usual approach in adaptive dynamics, we shall assume that mutations take place on a much slower time scale than the population dynamics reach equilibrium. In particular, suppose that we are considering predator evolution and we currently have $k$ predator types (which we call the resident types) and one prey type coexisting in equilibrium. We introduce a small density of a new type of predator, called the mutant type, with traits chosen according to a specified mutation distribution, and let the densities evolve according to (\ref{LV}) until a possibly new equilibrium is reached before introducing the next mutant type into the population. By traits, we mean the parameters in (\ref{LV}) (birth, death, and consumption rates) that characterize each individuals ability to survive and propagate. We then repeat this process, using only those predators which could coexist at the previous step. In this way, we obtain a Markov chain of resident types with transitions determined by the outcomes of the ecological interactions between the previous residents and the mutant. Once we have introduced some preliminary results for the ODE, we will formulate this process more precisely.

%This ``toy model'' can be thought of as an embedding (at the times of mutation) of a more general model in which mutations occur at some small rate $\gamma$ and we look at the limiting behavior of \eqref{LV} on the timescale $t/\gamma$ as $\gamma \to 0$.

Our evolutionary algorithm leads to a variety of different scenario depending on the underlying parameters being varied.
\begin{description}
\item{(a)} If we have a single, fixed predator and allow mutations in the prey's traits $(\alpha,\beta)$, then prolonged coexistence of prey does not occur and the traits of the resident prey converge to the solution of an ODE (see Theorem \ref{preymain}).
\item{(b)} If we have a single, fixed prey and allow the consumption rate $\alpha$ of predators to evolve, then coexistence of predators occurs, but the number of coexisting predators remains tight and the differences of the parameters from a reference type converge in distribution to a limit (see Theorem \ref{deltafixedmain}).
\item{(c)} If we have a single, fixed prey and allow the death rate $\delta$ of predators to evolve, the number of coexisting predators tends to infinity and we can calculate the speed at which the traits of the least and most fit predators in the population increase (see Theorem \ref{alphafixedmain}).
\end{description}
In all three cases, our results are more mathematically interesting than biologically relevant since in (b), for example, the consumption rate of all predators currently present in the population increases without bound.

The remainder of this section is dedicated to statements of these results and some conjectures for future research. Proofs of the three results in (a) - (c) above will be contained in Sections \ref{preysec}-\ref{alphafixedsec}.

\subsection{Prey Evolution} \label{preysecintro}

We begin with the case in which we have a single predator with fixed death rate $\delta > 0$ and we allow prey types to evolve. Throughout the remainder of this subsection when we refer to \eqref{LV}, we shall always assume that $N=1$ and let $v$ denote the density of our single predator. Prey types are characterized by their two dimensional trait vectors $y = (\alpha,\beta) \in \R^2$ and we say that prey types $y_1,...,y_M$ can \emph{coexist} with the predator if whenever we run \eqref{LV} started from positive initial densities, $v(0), u_i(0) > 0$, the densities $v(t)$, $u_i(t)$ remain bounded away from zero for all time. Our first step is to discuss criteria for coexistence. We use the notation $u = (u_1,u_2,...,u_M)$ and
\begin{align*}
\Gamma_{M,1} &= \{(u,v) \in \R^{M+1}: v, u_i \geq 0 , \, \quad \sum u_i \leq 1\} \\
\Gamma_{M,1}^{J,+}&= \{(u,v) \in \Gamma_{M,1}: v, u_i > 0 , \, \forall i \in J \}
\end{align*}
for any $J \subset\{1,2,...M\}$. If $J =\{1,...,M\}$, we simply write $\Gamma_{M,1}^+ = \Gamma_{M,1}^{J,+}$. Note that $\Gamma_{M,1}$ is invariant under \eqref{LV}. Here and elsewhere, we shall use $|\cdot|$ to denote the cardinality of a finite set, the absolute value of a real number, and the Lebesgue measure of a set of real numbers depending on the context.

\begin{proposition} \label{preyODEmain}
For all prey $y_i \in \R^2$, $i \leq M$, with different birth rates, \eqref{LV} has an explicitly calculable equilibrium $\sigma = (\sigma_1,...,\sigma_M,\sigma_{M+1}) \in \Gamma_{M,1}$ which is globally attracting on $\Gamma_{M,1}^+$. Furthermore, if $J_\sigma = \{i \leq M: \sigma_i >0\}$, then $|J_\sigma| \leq 2$ and $\sigma$ is globally attracting on $\Gamma_{M,1}^{J_\sigma,+}$ as well.
\end{proposition}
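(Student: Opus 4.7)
The plan is to first enumerate the equilibria via a linear-algebraic observation, then isolate the attractor by invasion analysis, and finally establish global attraction with a Lyapunov function.

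For the bound $|J_\sigma| \leq 2$, at any equilibrium $\sigma$ with predator density $v^* := \sigma_{M+1} > 0$ and prey set $J = \{i : \sigma_i > 0\}$, setting each prey growth rate in \eqref{LV} to zero yields the affine conditions
\[
\beta_i (1 - s^*) - \alpha_i v^* = 1, \qquad i \in J,
\]
where $s^* = \sum_k \sigma_k$. Viewed as a linear equation in the two unknowns $(1 - s^*, v^*)$, this can hold for three or more indices only if the trait vectors $\{(\alpha_i, \beta_i) : i \in J\}$ are collinear in $\R^2$; in that case the remaining constraints $\sum_{i \in J} \sigma_i = s^*$ and $\sum_{i \in J} \alpha_i \sigma_i = v^* + \delta$ leave a $(|J|-2)$-parameter family of equilibria, which cannot be asymptotically isolated. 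Combined with the distinct-$\beta_i$ hypothesis, this forces $|J_\sigma| \leq 2$ and also produces closed-form expressions: for each pair $\{i, j\}$ with $\beta_i \neq \beta_j$ the $2 \times 2$ system above determines $(s^*, v^*)$, and the predator-balance together with $\sigma_i + \sigma_j = s^*$ recovers $(\sigma_i, \sigma_j)$; the one-prey equilibria are computed similarly. The specific attractor $\sigma$ is then pinned down by a greedy invasion procedure: among single-prey equilibria, look for one that no other prey $k$ can invade (i.e., $\beta_k(1-\sigma_i) - 1 - \alpha_k v^* \leq 0$ for every $k \neq i$); if one exists, take it, and otherwise adjoin the best invader to form a two-type coexistence point.

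For global attraction on $\Gamma_{M,1}^+$, I would use a Volterra-type Lyapunov function
\[
V(u,v) = \sum_{i \in J_\sigma} c_i \Bigl(u_i - \sigma_i - \sigma_i \log \tfrac{u_i}{\sigma_i}\Bigr) + d\Bigl(v - \sigma_{M+1} - \sigma_{M+1}\log \tfrac{v}{\sigma_{M+1}}\Bigr) + \sum_{i \notin J_\sigma} c_i u_i,
\]
with positive weights $c_i, d$ chosen so that the quadratic part of $\dot V$ is negative semi-definite in the effective coordinates $(s - s^*, v - v^*)$, while the linear contributions $c_i u_i [\beta_i(1-s^*) - 1 - \alpha_i v^*]$ coming from non-resident prey are non-positive by the uninvadability inequalities established in the previous step. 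LaSalle's invariance principle then yields convergence to $\sigma$ on $\Gamma_{M,1}^+$, and restricting the same Lyapunov function to the invariant face $\{u_i = 0 : i \notin J_\sigma\}$ gives the stronger attraction statement on $\Gamma_{M,1}^{J_\sigma,+}$.

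The main obstacle is the Lyapunov construction. Because the prey interact only through the aggregate $s$ but with type-dependent weights $\beta_i$, the natural interaction matrix fails to be sign-symmetric and the textbook Goh weights $c_i = 1/\beta_i$ do not immediately produce $\dot V \leq 0$. The remedy is to exploit the rank-one structure of the prey competition to collapse the quadratic form in $\dot V$ to just two effective variables $s - s^*$ and $v - v^*$; choosing $(c_i, d)$ that accomplish this while simultaneously controlling the cross-terms with non-resident prey is where the bulk of the technical work lies.
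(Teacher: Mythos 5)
Your linear-algebra argument for $|J_\sigma|\le 2$ is essentially the same as the paper's: the paper rewrites \eqref{LV} in Lotka--Volterra form with $M=3$ and observes $\det(A)=0$, which is the same degeneracy you identify when three prey must solve a $2\times 2$ affine system in $(1-s^*, v^*)$. Where you genuinely diverge is the global-attraction argument, and that is where your proposal has a real gap.

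The paper does not construct a Lyapunov function by hand. It proceeds by case analysis (Lemmas~\ref{2prey1pred} and~\ref{3prey1pred}): it uses the Lotka--Volterra index theorem (HS 13.4.4) to show there is a unique regular, saturated equilibrium; for the coexistence case it checks the Routh--Hurwitz conditions on the $3\times 3$ Jacobian $J_{\sigma^2}$ directly and then invokes HS Theorem 15.3.1 for global attraction; and for the non-coexistence cases it uses repelling functions from \cite{RD2} to show trajectories exit $\Gamma^+_{2,1}$ through the face on which the boundary equilibrium is attracting. Your single Volterra-type $V$ is a cleaner idea in principle, but the ``collapse to two effective variables'' you propose does not actually occur. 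Write $w_i = u_i - \sigma_i$, $z = v - \sigma_{M+1}$, $\tilde s = \sum_i w_i$. The quadratic part of $\dot V$ is
\[
Q = -\tilde s \sum_i c_i\beta_i w_i \;+\; z\sum_i (d - c_i)\alpha_i w_i \;-\; d z^2,
\]
and requiring $Q\le 0$ already on the slice $z=0$ forces $c_i\beta_i$ to be constant (otherwise choose $w$ with $\tilde s$ and $\sum c_i\beta_i w_i$ of opposite sign), i.e.\ it \emph{forces} the Goh weights $c_i = K/\beta_i$ which you explicitly say do not suffice. With those weights fixed, the predator cross-term $z\sum_i (d-K/\beta_i)\alpha_i w_i$ cannot be expressed in terms of $\tilde s$ and $z$ alone; it is a genuine extra linear functional of $w$, and on the subspace $\tilde s = 0$ one gets $Q = -dz^2 + z\langle\gamma, w\rangle$ with $\gamma_i = (d-K/\beta_i)\alpha_i$, which is positive for suitable $(w,z)$ unless $\gamma$ is identically zero. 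For $M\ge 3$ that is an overdetermined system in the single scalar $d$; even for $M=2$ the solvability condition $d(\alpha_1-\alpha_2) = K(\alpha_1/\beta_1 - \alpha_2/\beta_2)$ need not produce a \emph{positive} $d$, so the function $V$ is not legitimate. In short, the rank-one structure of the prey competition pins down the $c_i$, but it does not buy you control of the prey--predator coupling, and the ``remedy'' you describe does not close the argument.

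Two smaller points: your greedy invasion procedure for \emph{identifying} the attractor $\sigma$ is stated as a recipe, not an argument; the paper gets uniqueness of the saturated fixed point from the index theorem, which is doing real work you would need to replace. And LaSalle on $\Gamma_{M,1}^+$ requires knowing the $\omega$-limit sits in the largest invariant subset of $\{\dot V = 0\}$ --- straightforward once $\dot V\le 0$ is in hand, but moot given the issue above. If you want to pursue the Lyapunov route you will likely need a non-diagonal or non-Volterra $V$, or to combine a local R--H check with a separate boundary argument, which is in effect what the paper does.
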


Proposition \ref{preyODEmain} follows from Lemmas \ref{2prey1pred} and \ref{3prey1pred} in Section \ref{preysec}. Since we need explicit formulas for the equilibria of \eqref{LV}, we will need to redo some standard results on Lotka-Volterra equations (see, for example, Chapters 13 and 15 of Hofbauer and Sigmund \cite{HS} and Chapter 3 in Takeuchi \cite{Tak}) to prove our results.

We are now ready to describe the Prey Evolutionary Process (Prey EP). This process is a continuous time Markov jump process which keeps track of the current resident prey types in the population. Proposition \ref{preyODEmain} tells us that we will never have more than two coexisting prey types so at time $t$, the state of the Prey EP is $\mathbf{Y}(t) = (Y_1(t),Y_2(t)) \in \R^2 \times \R^2$. For initial conditions, we set $Y_2(0) = (0,0)$, i.e., the second prey species is initially absent, and choose any $Y_1(0) = (\alpha(0),\beta(0))$ satisfying
$$
Y_1(0) \in \mathcal{V} \equiv \left\{(\alpha,\beta) \in R^2_+: \beta > \frac{\alpha}{\alpha-\delta} > 1\right\}.
$$
The reason for this choice of $Y_1(0)$ is that if $M=1$, the globally attracting equilibrium described in Proposition \ref{preyODEmain} satisfies $\sigma_1, \sigma_2 >0$ if and only if the prey type has traits in $\mathcal{V}$ (see \eqref{viable} in Section \ref{preysec}). As long as $Y_1(t) \neq (0,0)$, mutational events occur at rate 1 and after a mutation, the transitions for $\mathbf{Y}(t)$ are determined by the following procedure. We pick one of the non-zero $Y_i(t-)$, $i=1,2$ at random and choose $Y_{new}= (\alpha_{new},\beta_{new})$ uniformly from $B_\ep(Y_i(t-))$, the ball of radius $\ep$ around $Y_i(t-)$. If $Y_2(t-) \neq (0,0)$, let $\sigma$ be the equilibrium obtained in Proposition \ref{preyODEmain} when $M=3$ and the prey have traits $y_1,y_2,y_3 = Y_1(t-), Y_2(t-), Y_{new}$. If $Y_2(t-) = (0,0)$, then let $\sigma$ be the equilibrium obtained in Proposition \ref{preyODEmain} when $M=2$ and the prey have traits $y_1, y_2 = Y_1(t-), Y_{new}$. Note that since the probability of inserting a mutant with the same birth rate as one of the residents is 0, we do not have to worry about the exceptional case where Proposition \ref{preyODEmain} does not apply. If $|J_\sigma|=2$, then we set $Y_1(t)$ and $Y_2(t)$ equal to the parameter values of the two prey with positive equilibrium densities. If $|J_\sigma|=1$, then we set $Y_1(t)$ equal to the parameter values of the single prey with positive equilibrium density and take $Y_2(t)=(0,0)$. If $|J_\sigma|=0$, we set $Y_1(t), Y_2(t)=(0,0)$ and the process enters an absorbing state. We say that the population is monomorphic when $Y_2(t) = (0,0)$ and refer to the events where $Y_2(t)$ jumps from $(0,0)$ as coexistence events.

%
%Letting $(1) = \min\{i: \sigma_i >0\}$ and $(2)=\min\{ i > (1): \sigma_i > 0\}$, the new state of the Prey EP is then
%\beqx
%(Y_1(t),Y_2(t)) = \begin{cases}
% (y_{(1)}, y_{(2)}) & \text{ if } |J_\sigma| = 2\\
%(y_{(1)}, 0) & \text { if } |J_\sigma| = 1 \\
%(0,0) & \text{ if } |J_\sigma|=0.
%\end{cases}
%\eeqx
%Note that by definition of the Prey EP, $(0,0)$ is an absorbing state.

Our first main result says that in the small mutation limit, the population is essentially monomorphic.

\begin{theorem} \label{preymain}
Let $T >0$. As $\ep \to 0$, $Y_1^\ep(t) \equiv Y_1(t/\ep) \to y_1(t)$ in probability uniformly on $[0,T]$. $y_1(t)$ is the unique solution to the ODE
\beq \label{FAD}
\frac{dy_1}{dt} = \frac{2}{3\pi} {\cal N}(y_1(t))
\eeq
with initial conditions $y_1(0) = Y_1(0)$ where $\mathcal{N}(\cdot)$ is explicitly calculable, see (\ref{vectorN}). Furthermore, if we let $Y_2^\ep(t)= Y_2(t/\ep )$ and
$$N_t^\ep \equiv |\{s \leq t: Y_2^\ep(s-) =0, \, Y_2^\ep(s)\neq 0\}|$$
$t \leq T$, be the number of times $Y_2$ jumps from 0 before time $t/\ep$, then as $\ep \to 0$,
$$N^\ep \Rightarrow N$$
where $N$ is a nonhomogeneous Poisson Process on $[0,T]$ and $``\Rightarrow''$ denotes convergence in distribution.
\end{theorem}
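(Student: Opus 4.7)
The plan is to analyze the rescaled Markov jump process $\mathbf{Y}^\ep(t) = \mathbf{Y}(t/\ep)$, in which mutations arrive at rate $1/\ep$ and produce trial traits $Y_{new} = Y_i + \ep Z$ with $Z$ uniform on the $2$D unit disk. I will (i) analyze per-mutation transitions in the monomorphic regime $Y_2 = 0$ to identify the leading-order mean drift of $Y_1^\ep$ and the leading-order coexistence probability, (ii) show that coexistence excursions are short and leave $Y_1^\ep$ essentially unchanged, and (iii) combine (i) and (ii) with a standard jump-to-ODE limit theorem and a standard Poisson limit theorem for point processes. This places the result squarely in the canonical-equation-of-adaptive-dynamics framework.

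For step (i), suppose $Y_1 = y \in \mathcal{V}$ and $Y_2 = 0$, so the resident sits at its unique stable equilibrium $(u^*(y), v^*(y))$ from Proposition \ref{preyODEmain}. A mutant with trait $y' = y + \ep z$ has linearized invasion fitness
\beq
f(y+\ep z,\, y) \;=\; \beta'(1 - u^*) - 1 - \alpha' v^* \;=\; \ep\, \mathcal{N}(y) \cdot z + O(\ep^2),
\eeq
with $\mathcal{N}(y) \propto (-v^*(y),\, 1 - u^*(y))$ modulo the normalization absorbed into (\ref{vectorN}), the expansion using the resident equilibrium $\beta(1 - u^*) - 1 - \alpha v^* = 0$. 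By Proposition \ref{preyODEmain} at $M=2$, invasion occurs to leading order iff $z$ lies in the open half-disk $H(y) = \{|z| < 1,\ \mathcal{N}(y) \cdot z > 0\}$; inspection of the two-prey coexistence criteria further shows that coexistence ($|J_\sigma|=2$) is confined to a strip $C_\ep(y) \subset H(y)$ of area $O(\ep)$, while outside the strip the mutant replaces the resident. Consequently
\beqa
\PP[\,\text{coexistence} \mid Y_1 = y\,] &=& c(y)\,\ep + o(\ep), \\
\EE[\,Y_1^{new} - y \mid Y_1 = y,\ \text{monomorphic replacement}\,] &=& \ep \cdot \frac{2}{3\pi}\,\mathcal{N}(y) + o(\ep),
\eeqa
the second line being the elementary first-moment computation $\frac{1}{\pi}\int_{H(y)} z\,dz = \frac{2}{3\pi}\,\hat{\mathcal{N}}(y)$.

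For step (ii), during a coexistence excursion the two resident traits $Y_1(t-)$ and $Y_2(t-)$ are within $O(\ep)$ of each other, and repeating the analysis of step (i) for $M=3$ via Lemma \ref{3prey1pred} shows that every subsequent mutation has probability $1 - O(\ep)$ of terminating the excursion. An excursion therefore lasts $O(1)$ mutations in the original clock, i.e.\ $O(\ep)$ time in the rescaled clock, and moves $Y_1^\ep$ by at most $O(\ep)$ end to end. For step (iii), the generator of $Y_1^\ep$ restricted to the monomorphic regime satisfies
\beqx
\mathcal{L}^\ep g(y) \;=\; \frac{1}{\ep}\,\EE\bigl[g(Y_1^{new}) - g(y)\bigr] \;\longrightarrow\; \frac{2}{3\pi}\,\mathcal{N}(y) \cdot \nabla g(y)
\eeqx
uniformly on compact subsets of $\mathcal{V}$, with per-jump variance $O(\ep^2)$; combined with step (ii) (which bounds the perturbation to the Dynkin martingale from coexistence periods), a standard tightness-plus-martingale-problem argument (Ethier--Kurtz, Ch.~4) yields $Y_1^\ep \to y_1$ uniformly on $[0,T]$ in probability, where $y_1$ solves (\ref{FAD}). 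The rate of coexistence events per unit rescaled time is $(1/\ep)(c(Y_1^\ep(t))\,\ep + o(\ep)) \to c(y_1(t))$, so the compensator of $N^\ep$ converges to $\int_0^{\,\cdot} c(y_1(s))\,ds$, and a standard convergence criterion for point processes (e.g.\ Jacod--Shiryaev VIII.4.8) gives $N^\ep \Rightarrow N$.

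The main obstacle is the fine geometry in steps (i) and (ii): pinning down the coexistence strip $C_\ep(y)$, computing the leading-order coefficient $c(y)$, and verifying that once in a coexistence state the next mutation almost surely breaks coexistence. This all rests on a careful perturbative analysis of the two-prey- and three-prey-one-predator Lotka--Volterra systems when two of the prey traits are $O(\ep)$-close, for which the explicit equilibrium formulas provided by Lemmas \ref{2prey1pred} and \ref{3prey1pred} are essential.
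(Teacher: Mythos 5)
Your proposal tracks the paper's proof essentially step for step: linearize the invasion fitness around the resident equilibrium to get the half-disk condition and the drift coefficient $2/(3\pi)\,\mathcal{N}(y)$, identify the $O(\ep)$ lens between the tangent invadability curves as the coexistence region, show coexistence excursions occupy vanishing Lebesgue measure of time, invoke an Ethier--Kurtz martingale-problem limit for the ODE, and then read off the Poisson limit from convergence of the compensator (the paper uses Brown (1978); Jacod--Shiryaev VIII.4.8 is the same machinery). The one minor imprecision is your claim that ``every subsequent mutation has probability $1 - O(\ep)$ of terminating the excursion'': the correct statement, and what the paper uses, is that conditional on the mutant landing in $\mathcal{L}_{y_1} \cap \mathcal{L}_{y_2}$ (which occurs with probability roughly $1/2$, not $1 - O(\ep)$), the excursion terminates with probability $1 - O(\ep)$ -- so excursions last geometrically many mutations with $O(1)$ mean, not exactly one. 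The conclusion you need, that excursions occupy $O(\ep)$ rescaled time, still holds, so this does not break the argument. You also left implicit the localization device the paper makes explicit (the stopping time $\tau$ when $Y_1^\ep$ first leaves a compact neighborhood $K_2 + \rho \subset \mathcal{V}$, followed by the observation that $P(\tau \le T) \to 0$), which is needed to make the ``uniformly on compacts'' estimates apply over all of $[0,T]$.
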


We will prove Theorem \ref{preymain} in Section \ref{preysec}. The proof reveals that $|\{t\leq T: Y_2^\ep(t\wedge \tau) > 0\}| \to 0$ (see \eqref{y2neg}) which justifies our earlier claim that prolonged coexistence of prey does not occur. The constant on the right hand side of \eqref{FAD} is $EY^+$ when $(X,Y)$ is chosen at random from the ball of radius 1 and appears due to our choice of mutation distribution. (\ref{FAD}) is essentially a special case of the ``Canonical Equation of Adaptive Dynamics," see (6.2) in Dieckmann and Law \cite{DL} or (1) in Champagnat and Lambert \cite{CL}. In words, evolution proceeds in the direction of fastest increase in fitness. We do not have an explicitly defined fitness, but the infinitesimal drift in the traits is perpendicular to the region of values that cannot invade the resident.

The limiting ODE in Theorem \ref{preymain} is not biologically sensible because the prey birth rate increases without bound. This could be remedied by restricting the permitted values of $(\alpha,\beta)$ to a curve, or making them functions of other underlying parameters (see for example Dieckmann et al.~\cite{DML}, where traits depend on the ``body size'' of the predator and prey). However, since our main interest in including Theorem \ref{preymain} is as a contrast to the results on predator evolution below, we do not here endeavor to carry out the details of this more realistic scenario.

\subsection{Predator Evolution} \label{predatorsecintro}

We now consider the case where we have a single prey with fixed birth rate $\beta > 1$ and density $u$ ($M=1$ in \eqref{LV}), but we allow our predators to evolve. Predators are characterized by their two dimensional trait vector $x = (\alpha,\delta) \in \R_+^2$. As in the previous section, the first step is to develop a criteria for determining coexistence of multiple predators.  The next result, which is proved in Section \ref{predatorsec}, tells us that this can be done by checking a simple algebraic condition. In order to state the result, we define the \emph{characteristic ratio} of predator $x= (\alpha,\delta)$ as $\ell = \alpha/\delta$ and use the notation
\begin{align*}
\Gamma_{1,N} &= \{(u,v_1,...,v_N) \in \R^{N+1}: 0 \leq u \leq 1, v_i \geq 0\} \\
\Gamma_{1,N}^{+} &= \{(u,v_1,...,v_N) \in \R^{N+1}: 0 < u \leq 1, v_i > 0\}.
\end{align*}
Note that $\Gamma_{1,N}$ is invariant for \eqref{LV}.

\begin{proposition} \label{odemain}
For any $N \geq 1$, \eqref{LV} has a unique, globally attracting equilibrium $\sigma$ for initial densities in $\Gamma_{1,N}^+$. This equilibrium has the following characterization. Suppose that predators $x_1,...,x_N$ are ordered by increasing characteristic ratios. Then the globally attracting equilibrium has a positive $i^{th}$ component if and only if $i=1,...,m$ where $m \leq N$ is the largest value of $k$ satisfying the condition:
\begin{equation} \label{comain}
 \sum_{j=1}^k \alpha_j^2 (\ell_k - \ell_j) < r - \beta \ell_k
\end{equation}
and $r=\beta-1 > 0$ is the intrinsic growth rate of the prey.
\end{proposition}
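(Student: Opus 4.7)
My plan is to (1) parametrize candidate equilibria by the subset $S\subset\{1,\dots,N\}$ of surviving predators and solve explicitly; (2) impose self-consistency, forcing $S$ to be a contiguous prefix in the ordering by characteristic ratio; (3) translate the defining inequality into \eqref{comain}; and (4) establish global attraction via a Lyapunov function.

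For steps (1)--(3): setting the right-hand sides of \eqref{LV} to zero with $v_i=0$ for $i\notin S$ yields a linear system whose unique solution is
\[
u^*(S) = \frac{r + \sum_{i\in S}\alpha_i\delta_i}{\beta + \sum_{i\in S}\alpha_i^2},\qquad v_i^*(S) = \alpha_i u^*(S)-\delta_i\ \ (i\in S).
\]
A valid equilibrium requires both positivity ($\alpha_i u^*(S)>\delta_i$ for $i\in S$) and non-invasibility of the absent predators ($\alpha_j u^*(S)\le\delta_j$ for $j\notin S$); both compare $u^*(S)$ with $\delta_i/\alpha_i$, so $S$ must be a contiguous prefix in the ordering by $\ell_i$. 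Cross-multiplying the positivity inequality $u^*(\{1,\dots,k\})>\delta_k/\alpha_k$ by $\alpha_k(\beta+\sum_{j=1}^k\alpha_j^2)$ and regrouping gives exactly $r-\beta\ell_k>\sum_{j=1}^k\alpha_j^2(\ell_k-\ell_j)$, i.e.\ \eqref{comain}. Taking $m$ maximal with this inequality pins down positivity, while non-invasibility at $j>m$ follows by rearranging the failure of \eqref{comain} at $k=m+1$.

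For step (4): use the Volterra-type Lyapunov function
\[
V(u,v) = (u-u^*\log u)+\sum_{i\in S}(v_i-v_i^*\log v_i)+\sum_{i\notin S}v_i
\]
on $\Gamma_{1,N}^+$. Using the equilibrium identities $r = \beta u^*+\sum_{i\in S}\alpha_i v_i^*$ and $\alpha_i u^*-\delta_i = v_i^*$ for $i\in S$, a direct computation shows
\[
\dot V = -\beta(u-u^*)^2 - \sum_{i\in S}(v_i-v_i^*)^2 + \sum_{i\notin S}v_i\bigl((\alpha_i u^*-\delta_i)-v_i\bigr)\le 0,
\]
where the cross terms $\alpha_i(u-u^*)(v_i-v_i^*)$ cancel thanks to the antisymmetry of the prey-predator coupling in \eqref{LV}, and the final sum is $\le 0$ by non-invasibility. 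Hence $\dot V\le 0$ with equality only at the equilibrium, and LaSalle's invariance principle delivers global convergence on $\Gamma_{1,N}^+$ (and hence uniqueness).

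The main obstacle is making step (3) airtight: as $k$ varies, $u^*(\{1,\dots,k\})$ shifts, so in principle adding a predator could pull $u^*$ below the threshold of a previously surviving one and rupture the ``prefix'' structure. I would rule this out by showing that $k\mapsto u^*(\{1,\dots,k\})$ crosses the threshold sequence $\delta_k/\alpha_k$ exactly once, which is implicit in the explicit formula for $u^*$ and the ordering of the $\ell_i$ but needs a careful monotonicity argument to turn ``maximal $k$ in \eqref{comain}'' into a self-consistent $S$.
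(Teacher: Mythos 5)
Your route matches the paper's almost line for line: same equilibrium formulas, same reduction to a prefix via comparison of $u^*(S)$ with the $\ell_i$, same Volterra--Lyapunov function, and the same $\dot V$ cancellation. The paper proves this via Lemma \ref{translaw} (the algebraic coexistence condition) and Lemma \ref{lyap} (the Lyapunov argument).

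The gap you flag at the end is real but lighter than a ``monotonicity of $k\mapsto u^*(\{1,\dots,k\})$'' argument, which in fact fails --- $u^*(\{1,\dots,k\})$ is not monotone in $k$. What is true, and what the paper exploits in the manipulation leading to \eqref{noninvade}, is an exact cancellation: clearing the denominator in $u^*(\{1,\dots,k\})\gtrless\ell_k$ and using $\ell_k\alpha_k^2=\alpha_k\delta_k$, the $k$th-term contributions drop from both sides, giving
\[
u^*(\{1,\dots,k\}) \gtrless \ell_k \quad\Longleftrightarrow\quad u^*(\{1,\dots,k-1\}) \gtrless \ell_k.
\]
(Equivalently, $u^*(\{1,\dots,k\})$ is a mediant of $u^*(\{1,\dots,k-1\})$ and $\ell_k$, hence lies between them.) Combined with $\ell_{k-1}<\ell_k$, this shows the set of $k$ satisfying \eqref{comain} is downward closed, so ``largest $k$'' does select a prefix $\{1,\dots,m\}$. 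It then gives everything you need at once: $v_j^*>0$ for $j\le m$ because $u^*(\{1,\dots,m\})>\ell_m\ge\ell_j$; non-invasibility $u^*(\{1,\dots,m\})\le\ell_{m+1}\le\ell_j$ for $j>m$, by the same cancellation applied to the failure of \eqref{comain} at $k=m+1$; and uniqueness of the valid prefix, since two nested valid prefixes would force $u^*(\{1,\dots,k\})$ to be simultaneously $\le\ell_{k+1}$ and $>\ell_{k+1}$. The paper states this as the observation that \eqref{master} with $\ell_k$ replaced by $\ell_j$, $j<k$, is equivalent to $\sigma_j^k>0$; once you see the cancellation, your argument is complete.
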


We prove Proposition \ref{odemain} in Section \ref{predatorsec}. The definition for the Predator Evolution Process (Predator EP) is similar to the definition of the Prey EP except the state space is now $(\R^2)^\mathbb{N}$ as there is no limit on the number of predators that can coexist. The state of the process at time $t$ is
$$\mathbf{X}(t) = ((\alpha_1(t),\delta_1(t)),(\alpha_2(t),\delta_2(t)),\dotso).$$
For initial conditions, we choose $(\alpha_1(0), \delta_1(0))$ so that $ \alpha_1(0)r /\beta > \delta_1(0)$ and set $\alpha_k(0), \beta_k(0) = 0$ for all $k \geq 2$. Our choice of $\alpha_1(0),\delta_1(0)$ guarantees that the initial predator can coexist with the prey (see Section \ref{predatorsec}). As long as $(a_1(t), \beta_1(t)) \neq (0,0)$, mutations occur at rate 1 and if a mutation occurs at time $t$, transitions are described by the following rules. Define $N_t = \max\{i: (\alpha_i(t-),\beta_i(t-)) \neq (0,0)\}$, choose one of $(\alpha_i(t-),\delta_i(t-))$, $i \leq N_t$ at random, and introduce a new mutant predator with traits $\alpha_{new} = \alpha_i + \ep  U_1$ and $\delta_{new} = \delta_i e^{\ep U_2}$ where $U_1, U_2 \sim$ Uniform[-1,1]. We then order the predators $(\alpha_1(t-),\beta_2(t-)),\dotso (\alpha_{N_t}(t-),\beta_{N_t}(t-)),$ $(\alpha_{new},\beta_{new})$ by increasing characteristic ratios and call the ordered traits $x_1,$ $\dotso,$ $x_{N_t+1}$. Applying Proposition \ref{odemain} with predators $x_1,...,x_{N_t+1}$, we obtain a globally attracting equilibrium $\sigma$ with $\sigma_i > 0$ if and only if $i \leq m$ for some $m \leq N_t+1$. We then set $(\alpha_i(t),\beta_i(t)) = x_i$ for all $i \leq m$ and set $(\alpha_i(t),\beta_i(t)) = (0,0)$ for all $i > m$.

Simulations suggest that we see a growing cloud of coexisting predators with some limiting shape and all predators have consumption rates $\alpha$ going off to infinity and $\log(\ell)$ going to $-\infty$ (see Figures \ref{2Dscatter} and \ref{2Dnumber}). We were not able to analyze the two dimensional system so we will specialize to the two cases where only $\alpha$ or $\delta$ varies and the other remains fixed.

\subsection{The Alpha Predator Evolution Process} \label{deltafixedintro}

In this section, we assume that $\delta=1$ remains fixed and allow for mutations in $\alpha$. Note that the characteristic ratio of a predator is now $1/\alpha$ so that ordering predators by increasing characteristic ratios is equivalent to ordering predators by decreasing consumption rates. We shall prove our results for a discrete time version of the Predator EP in which the $n^{th}$ mutation occurs at time $n$. Once this is done, it is straightforward to generalize the result to continuous time.

To more precisely describe the version we study, suppose that at time 0, we have a single predator $\alpha(0)$ which can coexist with the prey. If at time $n$, we have $N_n$ predators $\alpha_1(n),...,\alpha_{N_n}(n)$ in decreasing order that satisfy \eqref{comain} with $k = N_n$, then at time $n+1$ we choose one of the $\alpha_j(n)$, $j \leq N_n$ at random, introduce a mutant with trait $\alpha_{new} = \alpha_j(n) + \ep U$, $U \sim$ Uniform[-1,1], and then use \eqref{comain} to decide on the state of the process at time $n+1$ in the same manner as we did for the Predator EP. We shall refer to this process as the Alpha Predator Evolution Process (APEP) and use the following notation throughout the remainder of the section:
\begin{itemize}
\item $N_n = $ number of coexisting predators at time $n$.
\item $\alpha_j(n) = j^{th}$ largest $\alpha$ amongst all coexisting predators at time $n$ for $j \leq N_n$
\item $\alpha_{\min}(n) = \alpha_{N_n}(n)$
\item $\alpha_j(n) = \alpha_{\min}(n)$ if $j > N_n$.
\item $d_j(n) = \alpha_j(n) - \alpha_{\min}(n) =$ differences between predator consumption rates.
\item $\Delta_n = (d_1(n),d_2(n),\ldots)$
\end{itemize}
\n Setting $\alpha_j(n) = \alpha_{\min}(n)$ for $j > N_n$ is for convenience so that $d_j = 0$ for all $j\geq N_n$.

Substituting $\delta=1$, the condition (\ref{comain}) for coexistence of $\alpha_1,...,\alpha_N$ simplifies to
\beq
\sum_{j=1}^{N} \frac{\alpha_j}{\alpha_N} (\alpha_j-\alpha_N) < r - \frac{\beta}{\alpha_N}
\label{fixdelcond}
\eeq
Since $\alpha_j/\alpha_N>1$, this implies that all the differences $\alpha_j-\alpha_N$ must be $<r$ so we define $\mathcal{S} := [0,r]^{\mathbb{N}}$ and let $\|\cdot\|_{TV}$ denote the total variation norm on $\mathcal{M}_1(\mathcal{S})$ = space of probability measure on $\mathcal{S}$. We denote by $P^\alpha$ the law of the APEP started from an initial predator with trait $\alpha$.

\begin{theorem} \label{deltafixedmain}
$N_n$ is tight and $\alpha_{\min}(n) \rightarrow \infty$ a.s. as $n \rightarrow \infty$. In addition, there exists a measure $\pi_\ep$ on $\mathcal{S}$ and constant $a_\ep>0$ so that
$$
\| P^{\alpha(0)}(\Delta_n \in \cdot) - \pi_\ep(\cdot)\|_{TV} \rightarrow 0
$$
and $\alpha_{\min}(n)/n \rightarrow a_\ep > 0$ as $n \rightarrow \infty$.
\end{theorem}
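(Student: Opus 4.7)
My strategy is to compare the APEP with an idealized Markov chain on $\mathcal{S}$ that captures the behavior in the limit $\alpha_{\min}\to\infty$, establish ergodicity of the idealized chain, and then transfer the four claims by coupling. Writing (\ref{fixdelcond}) in terms of $d_j=\alpha_j-\alpha_{\min}$ gives
\[
\sum_{j=1}^{N} d_j \;+\; \frac{1}{\alpha_{\min}}\sum_{j=1}^{N} d_j^2 \;+\; \frac{\beta}{\alpha_{\min}} \;<\; r,
\]
which reduces, as $\alpha_{\min}\to\infty$, to the coordinate-free condition $\sum d_j<r$. Let $\tilde\Delta_n$ be the Markov chain on $\mathcal{S}$ whose transitions are defined exactly as for $\Delta_n$ but using this limiting condition; it is an honest $\mathcal{S}$-valued chain that does not depend on any scalar coordinate, and its nonzero-entry count $\tilde N_n$ is the natural proxy for $N_n$.

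The core analytic step is to show that $\tilde\Delta_n$ is positive Harris recurrent with a unique stationary law $\pi_\ep$ and is uniformly ergodic. I would obtain a Foster-Lyapunov drift for $\tilde N_n$: when $\tilde N_n\gg r/\ep$, a mutant that undercuts the current minimum (probability of order $1/\tilde N_n$) shifts every surviving $d_j$ upward by up to $\ep$, boosting $\sum d_j$ by at least $\tilde N_n$ times the shift and forcing many deletions to restore $\sum d_j<r$; other mutations insert a positive $d$-value that typically pushes $\sum d_j$ past $r$ and cause at least one deletion. A quantitative version gives $\EE[\tilde N_{n+1}-\tilde N_n\mid \tilde N_n=k]\leq -c<0$ for $k\geq K_0$, whence tightness. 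Irreducibility and aperiodicity come from the continuous mutation kernel, and the state $\tilde\Delta\equiv 0$ (single resident at the minimum) is reachable with positive probability from any bounded-$\tilde N$ configuration, so standard Meyn-Tweedie theory yields geometric ergodicity in total variation.

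For the transfer, I would couple $\Delta_n$ and $\tilde\Delta_n$ by using the same parent choice and the same uniform $U$ at each step. The two chains disagree only when the outcome of the coexistence test hinges on the $O(1/\alpha_{\min})$ correction in the rewritten inequality, an event of probability $O(1/\alpha_{\min})$ per step. A bootstrap shows first that $\alpha_{\min}(n)\to\infty$ a.s., using that every resident deletion raises $\alpha_{\min}$ while each undercutting mutation drops it by at most $\ep$; this makes the cumulative coupling defect summable. The coupling then gives $\|P^{\alpha(0)}(\Delta_n\in\cdot)-\pi_\ep\|_{TV}\to 0$, and applying the ergodic theorem for Harris chains to the bounded one-step drift of $\alpha_{\min}$ yields $\alpha_{\min}(n)/n\to a_\ep$ with
\[
a_\ep \;=\; \int \EE\bigl[\alpha_{\min}(1)-\alpha_{\min}(0)\,\big|\,\tilde\Delta_0=x\bigr]\,\pi_\ep(dx) \;>\; 0.
\]

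\textbf{Main obstacle.} The delicate step is the drift estimate for $\tilde N_n$: a single mutation can eliminate many predators in a cascade, and the number eliminated depends on the full geometry of $\tilde\Delta_n$ rather than just $\tilde N_n$, so uniform control requires a secondary Lyapunov function (for instance $\sum d_j^2$ or a weighted count) to rule out pathological configurations with many near-equal $d_j$'s that would escape a naive count bound. The circularity between ``$\alpha_{\min}$ is large'' (needed for the coupling) and ``$\Delta_n\approx\tilde\Delta_n$'' (used to infer the positive drift of $\alpha_{\min}$) also needs care, most likely via a bootstrap using crude one-sided bounds on the step-increment of $\alpha_{\min}$ that do not invoke stationarity, followed by the ergodic upgrade to the sharp constant $a_\ep$.
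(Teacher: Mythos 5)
Your high-level architecture matches the paper's: introduce the idealized limiting chain with coexistence criterion $\sum d_j<r$, prove that chain is positive Harris recurrent with stationary law $\pi_\ep$, couple the true (nonhomogeneous) chain to it, and extract the linear speed of $\alpha_{\min}$ from an ergodic theorem. However, two of your bridging steps diverge from the paper and one of them is a genuine gap.

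First, for tightness of $N_n$ you propose a Foster--Lyapunov drift argument on $\tilde N_n$ supplemented by a secondary Lyapunov functional to handle cascading deletions. The paper does not go this route and does not need to: with $M=\lceil 4r/\ep\rceil$, the coexistence inequality forces that at most $M$ of the $d_j$ can exceed $\ep/4$, and with probability $\geq 4^{-M}$ (uniformly over the current state) the next $M$ mutants all land to the right of $\alpha_{\min}+\ep/2$, which flushes every predator with $d_j<\ep/4$ and forces $N_{n+M}\le 2M$. This one-sided construction gives a uniform minorization of the return probability to a fixed bounded set in a fixed number of steps, so tightness and (via the same path-construction to a carefully chosen regenerative set $G$ with controlled spacings) positive Harris recurrence in the sense of Athreya--Ney come out together, with no drift inequality to balance against cascading deletions. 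Your worry about a secondary Lyapunov function is real for the drift approach but is entirely sidestepped by the paper.

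Second, and this is the actual flaw: your claim that the per-step coupling defect of order $O(1/\alpha_{\min}(n))$ is summable once $\alpha_{\min}(n)\to\infty$ is false in the regime you are trying to prove. You want to conclude $\alpha_{\min}(n)\sim a_\ep n$, but then $\sum_n 1/\alpha_{\min}(n) \asymp \sum_n 1/n = \infty$, so the defects are not summable and the argument as stated cannot close. The paper's resolution is different: once the two chains have decoupled, the regenerative-set minorization guarantees they can be re-coupled with probability at least $\lambda/2$ on each subsequent block. The probability $\zeta_n$ of being uncoupled at step $n$ therefore satisfies the contractive recursion $\zeta_{n+1}\le(1-\lambda/2)\zeta_n+\eta_{n+1}$, and since the one-block defect $\eta_n\to 0$ (but need not be summable), the recursion drives $\zeta_n\to 0$. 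This gives TV convergence, and the same device (comparing the indicator $D_n$ of a decoupled step against a two-state dominating Markov chain) lets the ergodic theorem for the limit chain transfer to the real chain and yield $\alpha_{\min}(n)/n\to a_\ep$. Without the contraction from a regenerative minorization, your summability argument does not repair itself.
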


This result is proved in Section \ref{deltafixedsec}. The reason for the difference from Theorem \ref{preymain} is that mutant types with traits similar to the resident type can always invade because predators only suffer density dependent killings from their own type. In the case of prey evolution, this is not the case since inter-and intra- specific competition equally affect the prey.

The key to the proof of Theorem \ref{deltafixedmain} is the observation that as $\alpha_N\to\infty$, the condition (\ref{fixdelcond}) becomes
$$
\sum_{j=1}^{N} (\alpha_j-\alpha_N) < r
$$
and we can show that the differences $\Delta_n$ are asymptotically a positive recurrent Harris chain with stationary distribution $\pi_\ep$. A coupling argument shows that the nonhomogeneous chain also converges to $\pi_\ep$. The linear growth of $\alpha_{min}$ then follows from a standard result on functionals of positive recurrent Markov chains.

Figure \ref{eps01alp3} illustrates the tightness of $N_n$ and linear growth of $\alpha_{\min}$. Figure \ref{verysmalleps} suggests that as the size of the perturbation $\ep\to 0$, the spacings between traits is $O(\ep)$, and the number of coexisting types is $O(1/\ep)$. We believe that if one converts the re-scaled spacings $\Delta_n/\ep$ into a measure by assigning each one mass $\ep$ then as $\ep\to 0$, the distribution of this measure under $\pi_\ep$ converges to a deterministic limit in which the density of particles is roughly, but not exactly, exponential, see Figure \ref{alphaden}.

\subsection{The Delta Predator Evolution Process } \label{fixedalphaintro}

The final evolution process we consider is the Delta Predator Evolution Process (DPEP). The DPEP is defined in continuous time and follows the same rules as the Predator EP except that all predators have fixed  $\alpha=1$ and we only allow for mutations in $\delta$. For convenience, we assume that $\ep=1$ and define $X_j(t) = -\log \delta_j(t)$. We also set $N_t$ = the number of coexisting predators at time $t$. Note that since $\delta_1(t) < \delta_2(t) < \cdots < \delta_{N_t}(t)$ by definition of the DPEP, we always have $X_1(t) > X_2(t) > \cdots >X_{N_t}(t)$ so that $X_{\max}(t) = X_1(t)$ and $X_{\min}(t) = X_{N_t}(t)$ give the traits of the most and least fit predators, respectively. Furthermore, \eqref{comain} implies that
\beq \label{contscond}
e^{-X_{N_t}(t)} \left( \beta + \sum_{j=1}^{N_t} 1-\exp(-[X_j(t)-X_{N_t}(t)]) \right) < r
\eeq
for all $t \geq 0$.

To get started in the analysis of the DPEP, our first step in Section \ref{alphafixedsec} is to prove a simple result which already shows that the behavior is much different from the APEP.

\begin{lemma} \label{alphafixedlem}
As $t \rightarrow \infty$, $N_t \to \infty$ a.s.
\end{lemma}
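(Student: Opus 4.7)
The plan is to show $N_t\to\infty$ by contradiction: assuming $\PP(\liminf_t N_t\le K)>0$ for some finite $K$, derive a contradiction. The argument has three steps: (i) the fittest predator is never killed; (ii) $\delta_{\min}(t)\to 0$ almost surely on the event of bounded $N_t$; and (iii) this smallness forces $N_t$ to exceed $K$. Step (i) follows from Proposition \ref{odemain}: the surviving subset after each mutation is a prefix in the ordering by characteristic ratio, and hence always includes the predator with the smallest $\delta$ (largest $\ell$). In particular $\delta_{\min}(t)$ is non-increasing in $t$.

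For step (ii), note that at each mutation whose pre-mutation state has $N\le K$, the probability of selecting the fittest predator and drawing $U_2\in(-1,0)$ is at least $1/(2K)$. Since $\liminf N\le K$ means that mutations with pre-mutation $N\le K$ occur infinitely often, a conditional Borel--Cantelli argument shows these ``good'' mutations occur infinitely often, each replacing $\delta_{\min}$ by $\delta_{\min}e^{U_2}$ with $U_2$ uniform on $(-1,0)$. By the strong law of large numbers, $\log\delta_{\min}(t)\to-\infty$ almost surely on the event.

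For step (iii), I would track the spread $\rho(t):=\delta_{\max}(t)/\delta_{\min}(t)$. As long as $\delta_{\max}(t)>r/(\beta+1)$, every mutation of the fittest predator that produces a mutant below $\delta_{\min}$ fails to coexist with the full population of $N_t+1$ predators (since the resulting LHS of \eqref{contscond} exceeds the old $L(t)$ by roughly $\delta_{\max}>r-L(t)$), and the surviving-subset rule forces a ``purge'' that strictly decreases $\delta_{\max}$. Conversely, $\delta_{\max}$ can increase only through a single upward mutation of an existing predator, altering any $\delta_j$ by at most a multiplicative factor $e$. Coupling $\log\rho(t)$ with a Markov chain having bounded upward jumps and positive-probability downward jumps when $\delta_{\max}>r/(\beta+1)$ shows that $\delta_{\max}(t)$ is eventually driven below $r/(\beta+1)$. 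From that point on, the LHS of \eqref{contscond} is bounded by $(K+\beta)\delta_{\max}(t)\to 0$, leaving massive slack; any mutation of the fittest predator with $|U_2|$ small enough then coexists with everyone and strictly increases $N_t$ beyond $K$, giving the desired contradiction.

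The main obstacle is the coupling in step (iii): rigorously controlling $\delta_{\max}(t)$ and ensuring that purges dominate upward jumps on the bounded-$N_t$ event. The crucial observation is that in the regime $\delta_{\max}>r/(\beta+1)$ a purge is deterministically triggered by any fittest-downward mutation (probability $\ge 1/(2K)$ on the event), while upward jumps of $\delta_{\max}$ require a specific mutation of $\delta_{N_t}$ whose probability of success shrinks as $\delta_{\max}\to r/\beta$, so a drift estimate yields the needed descent.
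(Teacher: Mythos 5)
Your steps (i) and (ii) are sound in spirit: the fittest predator (smallest $\ell$, hence smallest $\delta$) is always first in the ordering of Proposition \ref{odemain} and is never killed, so $\delta_{\min}(t)$ is non-increasing; and a conditional Borel--Cantelli argument on $\{\liminf N_t\le K\}$ does give $\delta_{\min}(t)\to 0$ there. (The paper obtains $\delta_{\min}\to 0$ unconditionally, by noting that $N$ changes by at most $1$ per mutation so the fittest is selected with probability $\ge 1/(n+1)$ at the $n$-th mutation, and $\sum 1/n=\infty$.) Step (iii), however, has two real gaps. First, a computational one: inserting a mutant with $\delta_{\text{new}}<\delta_{\min}$ increases the left side of \eqref{contscond} by exactly $\delta_{\max}-\delta_{\text{new}}$, not ``roughly $\delta_{\max}$''; when the population is tightly clustered this increment is small, so the claimed deterministic purge need not occur and the coupling drift estimate is not justified. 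Second, and more serious, even granting $\delta_{\max}\to 0$ along the subsequence where $N_t\le K$, showing that one well-chosen mutation pushes $N_t$ above $K$ does not contradict $\liminf N_t\le K$ -- you need that once $N_t$ exceeds $K$ it never returns, and your argument says nothing about persistence.

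Both problems disappear if you use the two facts the paper relies on and your proposal omits. (a) In the coordinate $X_j=-\log\delta_j$, the consecutive spacings always satisfy $|X_i-X_{i+1}|\le 1$: each new particle lands within distance $1$ of a surviving parent, and removals occur only at the left end, so this property is preserved. Hence the $M$-th smallest death rate obeys $\delta_M\le e^{M-1}\delta_{\min}\to 0$. (b) Once $\delta_M(\beta+M)<r$ holds, it holds forever: insertions can only decrease the $M$-th smallest $\delta$ (removals never touch it as long as $N\ge M$), and then \eqref{contscond} is automatically satisfied for the $M$ fittest predators, so $N_t\ge M$ for all later $t$. Combining (a), (b) with $\delta_{\min}\to 0$ gives $N_t\ge M$ eventually for every fixed $M$, which is $N_t\to\infty$ directly -- no contradiction framework, no tracking of $\delta_{\max}$, no coupling of $\log\rho$ are needed.
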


Our next result describes the asymptotic rates at which the smallest and largest predator traits increase. In what follows, we let $S_t$ be a random walk starting at $0$ that takes jumps at rate 1 uniform on $[-1,1]$. The theory of large deviations tells us that
$$
\Lambda(x) = \lim_{t\to\infty} \frac{1}{t} \log P( S_t > xt )
$$
exists and can be calculated in terms of the moment generating function of $S_t$.

\begin{theorem} \label{alphafixedmain}
$X_{\max}(t)/t \rightarrow a$ and $X_{\min}(t)/ t \rightarrow b$ a.s.~as $t \rightarrow \infty$ where $a \approx 0.9053$ and $b \approx 0.5667$ satisfy the equations
\beq \label{ab}
\Lambda(a) = -1, \qquad \Lambda(b) = -1 +b.
\eeq
Furthermore, we have $\liminf (1/t) \log N_t \geq b$ a.s.
\end{theorem}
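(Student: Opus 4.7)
The plan is to analyze the three quantities $X_{\max}(t)$, $X_{\min}(t)$, and $N_t$ by bracketing the DPEP between two simpler processes: an unrestricted branching random walk (BRW), and a BRW killed to the left of a linear boundary. Because the selection rule in Proposition \ref{odemain} removes only predators with the smallest $X$ values, the top of the cluster is insulated from killing, while the bottom is effectively a BRW running up against a moving barrier determined self-consistently by $\log N_t$.

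\textbf{Leading edge.} First I would construct the pure BRW $\tilde Z(t)$ in which every particle produces, at rate $1$, an offspring placed at $X_{\text{parent}}-U$ with $U\sim\text{Uniform}[-1,1]$, coupled to the DPEP through the same driving Poisson processes so that the DPEP's particles form a subset of $\tilde Z(t)$'s. Then $X_{\max}(t)\le \max\tilde Z(t)$, and classical BRW theory (Kingman, Hammersley, Biggins) yields $\max\tilde Z(t)/t\to a$ with $\Lambda(a)=-1$; this equation arises from the many-to-one identity $\EE[\#\{\text{particles near }at\}]\approx e^{t(1+\Lambda(a))}$, which is critical at $1+\Lambda(a)=0$. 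For the matching lower bound, I exploit that the DPEP's kill rule never removes the current top particle: tracing the lineage of $X_{\max}(t)$ backward yields a genealogy distributed exactly as in $\tilde Z(t)$, so $X_{\max}(t)/t\to a$ from below as well.

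\textbf{Trailing edge and population growth.} Bounding $1-e^{-(X_j-X_{\min})}\le 1$ in (\ref{contscond}) gives $X_{\min}(t)\ge \log((\beta+N_t-1)/r)$, so $\log N_t \le X_{\min}(t)+O(1)$, which ties the two quantities together. For $\liminf(1/t)\log N_t\ge b$ (and hence $\liminf X_{\min}(t)/t\ge b$), I would compare with a BRW killed at a linear barrier of slope $b'<b$ arbitrarily close to $b$; a many-to-one plus ballot argument shows the killed BRW survives with population growing at rate $1+\Lambda(b')$, which approaches $1+\Lambda(b)=b$ as $b'\uparrow b$, and a coupling argument matching the DPEP's configuration-dependent threshold against the deterministic line $b's$ transports these surviving particles into the DPEP. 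For the matching upper bound $\limsup X_{\min}(t)/t \le b$, compare with a finite branching--selection particle system whose cluster speed is governed by the same critical relation $1+\Lambda(b)=b$; heuristically, the trailing edge cannot advance faster than this critical BRW-with-barrier speed because otherwise the coexistence inequality would leave no room for new lower descendants to survive.

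\textbf{Main obstacle.} The hard step will be making the two couplings in the trailing-edge analysis rigorous. The DPEP's kill rule in (\ref{contscond}) is a nonlinear function of the entire particle configuration rather than a clean linear barrier, so one must quantify how closely $\log N_t$ tracks $X_{\min}(t)$ and establish a ``shape'' result for the trailing portion of the cluster in order to justify replacing the self-consistent threshold by a deterministic slope $b$. The upper bound on $X_{\min}$ via the finite branching--selection comparison is especially delicate, since it requires a matching lower bound on the rate at which the DPEP's bottom particle seeds surviving daughters, and hence sharp control on the local density of the cluster near its trailing edge.
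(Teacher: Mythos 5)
Your plan to bracket the DPEP between an unrestricted BRW and a killed BRW matches the spirit of the paper, but there are two concrete gaps, one of which is fatal to the leading-edge analysis as you have described it.

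\textbf{Leading-edge lower bound.} The observation that the DPEP kill rule never removes the current rightmost particle does \emph{not} imply $\liminf X_{\max}(t)/t \ge a$. The ancestral lineage of the current maximum is a single path, and a single path in the BRW moves far slower than the front: the front reaches speed $a$ only because it can draw on exponentially many particles, a few of which by chance take large rightward excursions. Knowing that the current top is never killed tells you nothing about whether \emph{its descendants} survive long enough to supply the front, because those descendants can and do drift back into the killing zone. The paper closes this gap with a three-step comparison: it shows (Lemma \ref{edgetotoy}) that once $N_t \ge M$ the DPEP's top $M$ positions dominate the finite branching--selection system $Y^M$ that always keeps the rightmost $M$; it then shows (Lemma \ref{toytobrw}) that $Y^M$ dominates a fresh BRW over the interval containing its first $M-1$ births; and finally (Lemma \ref{toy2}) a renewal argument, together with uniform integrability estimates \eqref{goal1}--\eqref{goal2}, gives $\liminf Y^M_1(t)/t \ge EZ_{\max}(B_M)/EB_M \to a$. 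None of this is replaced by a single-lineage argument; you need the whole population near the front.

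\textbf{Tying $\log N_t$ to $X_{\min}$.} The bound you claim, $X_{\min}(t) \ge \log((\beta+N_t-1)/r)$, does not follow from $1 - e^{-(X_j - X_{\min})}\le 1$. Substituting that bound into \eqref{contscond} gives $e^{-X_{\min}}(\beta + S) < r$ with $S \le N_t-1$, i.e.\ it upper-bounds $S$, which weakens rather than strengthens the inequality; if the particles cluster tightly near $X_{\min}$ then $S = o(N_t)$ and no bound on $N_t$ results. The inequality you want holds in the \emph{other} direction, and only at kill times: immediately before a kill the coexistence condition is violated, $e^{-X_{\min}}(\beta+S) \ge r$, and bounding $S \le N_t$ then gives $N_t e^{-X_{\min}} \gtrsim r$, i.e.\ $\log N_t \ge X_{\min} + O(1)$. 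This is exactly what the paper exploits for the upper bound on $X_{\min}$: combined with $N_t \le Z_t([X_{\min}(t),\infty))$ and $I(c) < c$ for $c>b$, the supposition $X_{\min}(t) \ge (b+\ep)t$ forces $N_t e^{-X_{\min}(t)}\to 0$, contradiction. Your proposed finite branching--selection comparison for the $X_{\min}$ upper bound is both vaguer and unnecessary; the BRW count argument does the job directly. Your killed-BRW argument for the lower bound on $\log N_t$ is in the spirit of the paper's Lemma \ref{killedBRW} and the contradiction argument using the quantity $F(t)$ in \eqref{thisguy}, and can likely be made rigorous along those lines, but the deduction $\liminf X_{\min}/t \ge b$ from it needs a correct inequality relating $\log N_t$ to $X_{\min}$, which is the one at kill times above, not the one you wrote.
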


We will prove Theorem \ref{alphafixedmain} in Section \ref{alphafixedsec}. To explain why it is true, it is convenient to adopt the perspective that the different predator types correspond to particles and their traits correspond to positions on the real line. Let $Z_t$ be the branching random walk in which particles give birth at rate 1 and their offspring are displaced by an amount uniform on $[-1,1]$. A result of Biggins \cite{JB1} implies that $r_t$, the position of right-most particle at time $t$ in the branching random walk, has $r_t/t \to a$ and
$$
\frac{1}{t} \log Z_t([xt,\infty)) \to 1 + \Lambda(x)
$$
for $0 \le x < a$ so $(1/t) \log Z_t([bt,\infty)) \to b$. Since we can construct $Z_t$ in such a way that all particles in $X(t)$ are in $Z_t$, we must have $\limsup X_{\max}(t)/t \leq a$ a.s.  The definition of $b$ and an argument by contradiction using (\ref{contscond}) gives the upper bound $\limsup_{t\to\infty} X_{\min}(t)/t \le b$ for the speed of the left-most particle.

To bound $\liminf_{t\to\infty} X_{\max}(t)/t$, we consider the following branching-selection particle system: at any time $t$, we have $M$ particles with positions $Y_1^M(t) > \cdots > Y_M^M(t)$, all giving birth at rate one. Whenever a new particle is born, we reorder and delete the leftmost particle. Using techniques from the study of the APEP in Section \ref{deltafixedsec}, we could show that $Y_1^M(t)/t \to a_M$, but instead we complete the proof of the first result by showing
\beq \label{TOYlim}
\lim_{M\to\infty} \liminf_{t\to\infty} Y^M_1(t)/t = a.
\eeq
Nina Gantert has pointed out to us that Berard and Gouere (2008) have recently proved
$$
a - a_M \sim C(\log(M))^{-2}
$$
for a related discrete time model in which all $M$ particle split into two and only the right-most $M$ are kept. This confirms a slow rate of convergence, which was predicted much earlier by Brunet and Derrida \cite{BD}, and which we observed in our numerical attempts to verify the limit in (\ref{TOYlim}), see Figure \ref{aMslow}.

To bound $\liminf_{t\to\infty} X_{\min}(t)/t$, we study the branching random walk with killing at $-K + \gamma t$. Our result given in Lemma \ref{killedBRW} is a cousin of a result of Kesten \cite{HK} for branching Brownian motion on $[0,\infty)$ where during its lifetime, each particle moves according to Brownian Motion with drift $\mu < 0$ and variance $\sigma^2$, all particles die at rate $c$ and give birth to a mean $m$ number of offspring upon death with particles killed when they hit 0. Kesten's result states that the system has positive probability of survival when $\mu < \mu_0 = (2\sigma^2c(m-1))^{1/2}$ (Theorem 1.1, (1.6)), and in this supercritical case, if we start with one particle at $x$, then for every interval $I$, $Z_t(I)/E_x Z_t(I) \to W$ a.s for some finite random variables $W$ (Theorem 1.1 (1.5)). However, Kesten's efforts are concentrated on the exotic behavior in the critical case $\mu = \mu_0$, and he says ``so far we have only an ugly and complicated proof of the growth results in the supercritical case, and we shall therefore not prove Theorem 1.1.'' In section 5, we show that using ideas of Biggins \cite{JB1} it is easy to prove results for $(1/t) \log Z_t([ct,\infty))$.

The result $\limsup_{t\to\infty} X_{\min}(t)/t \le b$ implies that if $T$ is large and we start the branching random walk with one particle at $X_{max}(T)$ at time $T$ then all of the particles in the branching random walk with killing at $(b+\ep)t$ are present in the $X_i(t)$. If $X_{\min}(t)$ is too far to the left then we would contradict (\ref{contscond}). The last part of the proof suggests that most particles are near $X_{\min}(t)$. Simulations (see Figure \ref{Xdens}) further suggest that:

\mn
{\bf Conjecture.} {\it
If we put mass $\exp(-X_{\min}(t))$ at $X_i(t)-X_{\min}(t)$ then this measure converges to a deterministic limit, which again is roughly but not exactly exponential.}

\mn
However, proving this seems to be a difficult problem. Recently, Durrett and Remenik \cite{DR} have proved convergence of the toy model to the solution of a free boundary problem as $M \to \infty$.

The final conclusion $\liminf_{t\to\infty} (\log N_t)/t \ge b$ follows from the result for $X_{\min}(t)$ and the proof of Lemma \ref{alphafixedlem}. Since the result comes from replacing (\ref{contscond}) by
$e^{-X_N(t)}(\beta+N) < r$, it seems unlikely that $b$ is the right constant, but finding the right constant would require proving the conjecture.

The proof of $\liminf_{t\to\infty} X_{\max}(t)/t \ge a$ also leads to the following result regarding the limiting behavior of the most fit predator in the APEP as the mutation radius $\ep \to 0$.

\begin{corollary} \label{fixdeltspeed}
If we run the APEP in continuous time and let
$$a^*_\ep = \lim_{t\to\infty} \alpha_{\max}^\ep(t)/t,$$
then $\lim_{\ep\to 0} a^*_\ep = a$.
\end{corollary}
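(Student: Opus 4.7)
The plan is to prove matching upper and lower bounds for $a^*_\ep$, with the lower bound adapted directly from the proof of $\liminf_{t\to\infty} X_{\max}(t)/t \ge a$ given in Theorem \ref{alphafixedmain}.

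For the upper bound $\limsup_{\ep\to 0} a^*_\ep \le a$, I would couple the continuous-time APEP with a branching random walk $Z^\ep_t$ in which each particle reproduces at the same rate as an APEP predator mutates and the offspring is displaced from the parent by $\ep U$ with $U\sim\mathrm{Uniform}[-1,1]$. Because the APEP only removes predators, via the coexistence condition \eqref{fixdelcond}, and never inserts extras, every APEP trait is the position of some particle in $Z_t^\ep$ at all times. Applying the Biggins-type speed formula to the rescaled BRW and taking $\ep \to 0$ under the natural time-scaling that defines the continuous-time APEP yields $\limsup_{\ep\to 0} a^*_\ep \le a$.

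For the lower bound $\liminf_{\ep\to 0} a^*_\ep \ge a$, fix $M$ and combine Theorem \ref{deltafixedmain} with the heuristic that $N \sim 1/\ep$ coexisting predators are present in stationarity (the discussion following Theorem \ref{deltafixedmain}) to show that, once $\ep$ is small and $t$ is large, the APEP contains at least $M$ coexisting predators with overwhelming probability. Then, using the common mutation clocks, I would construct a coupling of the top $M$ predator traits of the APEP with the $M$-particle branching-selection system $Y^M$ from Section \ref{alphafixedsec} under which $\alpha_{\max}^\ep(t) \ge Y_1^M(t)$ for all large $t$. This gives $\liminf_{\ep\to 0} a^*_\ep \ge $ (speed of $Y_1^M$); sending $M \to \infty$ and invoking \eqref{TOYlim} yields $\liminf_{\ep\to 0} a^*_\ep \ge a$.

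The main obstacle is constructing the coupling with $Y^M$ rigorously in the presence of the APEP's non-ordinal selection rule. The coexistence condition \eqref{fixdelcond} depends on the whole configuration of traits through a weighted quadratic in the $\alpha_i$'s, not merely on their rank order. The key technical step is to show that once $\alpha_{\min}$ is sufficiently large, this quadratic rule acts on the top $M$ particles like the purely ordinal ``delete the leftmost'' rule of $Y^M$, so that the coupling dominance is preserved uniformly in $\ep$. Matching the $\ep$-scale jumps of the APEP with the unit jumps of $Y^M$ in the speed comparison requires a compensating time rescaling, which is where the limit $a^*_\ep \to a$ (rather than $a^*_\ep \to 0$) ultimately comes from.
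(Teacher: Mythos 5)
Your general plan --- compare upward with a branching random walk and downward with the $M$-particle branching-selection system, then send $M\to\infty$ using \eqref{TOYlim} --- matches the paper's overall strategy, and you correctly put your finger on the crux: the coexistence rule \eqref{fixdelcond} depends on the whole configuration through a weighted quadratic, not on rank order, so it is not a priori the same as the ``delete the leftmost'' rule of $Y^M$. But you leave that obstacle unresolved, and that is precisely the step the paper supplies.

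The paper does not try to show the quadratic rule acts like the ordinal rule. Instead it chooses $\ep$ small enough that $\ep M(M-1)/2 < r$ and observes, by induction on the birth events in the coupling of Lemma \ref{toytobrw}, that the consecutive spacings among the top $M$ APEP traits remain $\le \ep$ at all times. Under the assumed inequality this forces $\sum_{j=1}^{M}(\alpha_j-\alpha_M)\le \ep M(M-1)/2 < r$, so by \eqref{fixdelcond} (for large $\alpha_{\min}$) the top $M$ particles are \emph{never} removed by selection. Once one knows the top $M$ survive forever, there is no need to compare selection rules at all: the top $M$ APEP traits automatically dominate the branching random walk $\zeta$ started at $\alpha_{\max}$ exactly as in Lemma \ref{toytobrw}, and the renewal argument of Lemma \ref{toy2} gives the speed. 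This also makes your appeal to the heuristic $N\sim 1/\ep$ unnecessary --- that claim is only a conjecture in the paper, and it plays no role in the proof; the only input needed is the elementary bound $\ep M(M-1)/2 < r$, which you control by choosing $\ep$ small for fixed $M$. Without the spacing induction, the coupling you propose cannot be shown to preserve domination through a removal event, so as written your lower bound has a genuine gap.
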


\section{Prey Evolution} \label{preysec}

In this section we will prove Theorem \ref{preymain}. We first establish Proposition \ref{preyODEmain} as a consequence of Lemmas \ref{2prey1pred} and \ref{3prey1pred} below. Note that these results only cover $M \leq 3$, but the proof of Lemma \ref{3prey1pred} can easily be extended to show that coexistence for $M \geq 4$ is not possible. Our results rely heavily on the notion of invadability (see Durrett \cite{RD2}) and results on Lotka-Volterra systems (see Chapter 13 and 15 of Hofbauer and Sigmund \cite{HS}) which we shall quote as needed. We shall also make use of the notation introduced in Section \ref{preysecintro} and assume throughout this section that $N=1$ in \eqref{LV}. Therefore, we have a single predator, whose density we shall denote by $v$, with fixed death rate $\delta >0$.

\subsection{Prey ODE Results}

Let $y_1 = (\alpha_1,\beta_1) \in \R_+^2$. To determine when the predator and a prey with trait $y_1$ can coexist, we note that if $\beta_1>1$ then in the absence of predators the prey reach an equilibrium density
\beq
\sigma_1^0(y_1) = (\beta_1-1)/\beta_1.
\label{preyeq}
\eeq
If the prey are in equilibrium then the predators can increase when $v$ is small if
\beq
\alpha_1 \sigma_1^0(y_1) - \delta >0.
\label{predinv}
\eeq
Using the formula for $\sigma_1^0(y_1)$, we see that this holds if and only if $\alpha_1>\delta$ and
\beq
\beta_1 > \frac{\alpha_1}{\alpha_1-\delta} > 1.
\label{viable}
\eeq
We call this set of $(\alpha_1,\beta_1)$ the {\it viable region} for prey and label it $\mathcal{V}$. See Figure \ref{preyinvadepic} for an example.

Algebra shows that when (\ref{viable}) occurs, there is a predator-prey equilibrium $\sigma^1(y_1)$ with coordinates
\beqx
\sigma_1^{1}(y_1)= \frac{ (\beta_1-1) + \alpha_1\delta }{ \beta_1  + \alpha^2_1 },
\qquad
\sigma_2^{1}(y_1) = \frac{(\beta_1-1)\alpha_1 - \beta_1\delta}{ \beta_1  + \alpha^2_1}.
%\label{twospecieseq}
\eeqx
A second prey type with trait $y_2=(\alpha_2,\beta_2)$ can invade the first prey and the predator in equilibrium when
\begin{align*}
\beta_2 (1-\sigma_1^{1}(y_1)) - 1 - \alpha_2 \sigma_2^{1}(y_1) &> 0.
%\label{2inv13int}
\end{align*}
By interchanging the roles of $y_1$ and $y_2$ we get the condition for the first prey to invade the second prey
and predator in equilibrium. If both prey traits are viable and the two invadability conditions hold, then it is shown in Section 7.1 of \cite{RD2} that there is coexistence in the ODE, i.e., the three densities stay bounded away from 0. That the densities actually converge to a positive equilibrium in this case is the result of Lemma \ref{2prey1pred} below.

Following \cite{RD2}, we use $\succ$ for ``invades" (prey $j$ can invade prey $1,...,j-1$ in equilibrium if its density will increase whenever $1,...,j-1$ are in equilibrium with the predator and a small initial density of $j$'s is introduced). Using the new notation and defining
$$
F(y_1,y_2) = \beta_2 (1-\sigma_1^1(y_1)) - 1 - \alpha_2 \sigma_2^1(y_1),
$$
we have $2 \succ 1$ if and only if
$$
y_2 \in \{y: F(y_1,y) > 0\}=:\mathcal{L}_{y_1}
$$
and $1 \succ 2$ if and only if
$$
y_2 \in \{y: F(y,y_1) > 0\} =:\mathcal{U}_{y_1}.
$$
We call the boundary curves $L_{y_1} \equiv \{y: F(y_1,y) = 0\} $ and $U_{y_1} \equiv \{y: F(y,y_1) = 0\}$ the invadability curves and note that we have the formulas
\begin{align*}
(\alpha,\beta) \in U_{y_1} &\Leftrightarrow \beta = g(y_1,\alpha) \\
(\alpha,\beta) \in L_{y_1} &\Leftrightarrow \beta = h(y_1,\alpha)
\end{align*}
where
$$
g(y_1,\alpha) = \frac{(\beta_1-1)\alpha^2 +(\alpha_1-\beta_1\delta)\alpha + \beta_1}{1 + \alpha_1(\alpha-\delta)}
$$
and
$$
h(y_1,\alpha) = \frac{\alpha \sigma_2^1(y_1)+1}{1-\sigma_1^1(y_1)}
$$
are well defined provided $y_1,(\alpha,\beta) \in \mathcal{V}$. Calculus shows that the curve $U_{y_1}$ is tangent to the curve $L_{y_1}$ at $y_1$ and we let $\mathcal{N}(y_1)$ denote the corresponding unit normal vector:
\beq
\mathcal{N}(y_1) = c (-\sigma_2^1(y_1),1-\sigma_1^1(y_1))
\label{vectorN}
\eeq
where $c$ is chosen to make the length one. The situation is depicted in Figure \ref{preyinvadepic}. Lemma \ref{2prey1pred} describes the set of all possible ecological outcomes based on this splitting of type space.

\begin{lemma} \label{2prey1pred}
Let $y_1,y_2 \in \mathcal{V}$ with $\beta_1 \neq \beta_2$ and suppose that $(u_1(0),u_2(0),v(0)) \in \Gamma_{2,1}^+ $. Then one of the following must be true
\begin{description}
\item[(a)] $y_2 \in \mathcal{L}_{y_1} \cap \mathcal{U}_{y_1}$ in which case the solution to \eqref{LV} converges to a unique positive equilibrium $\sigma^2(y_1,y_2)= (\sigma_1^2(y_1,y_2), \sigma_2^2(y_1,y_2), \sigma_3^2(y_1,y_2)) \in \Gamma_{2,1}^+$.
\item[(b)]  $y_2 \in \mathcal{L}_{y_1}$, but $y_2 \notin \mathcal{U}_{y_1}$ in which case the solution to \eqref{LV} converges to the equilibrium $(0,\sigma_1^1(y_2),\sigma_2^1(y_2))$.
\item[(c)] $y_2 \in \mathcal{U}_{y_1}$, but $y_2 \notin \mathcal{L}_{y_1}$ in which case the solution to \eqref{LV} converges to the equilibrium $(\sigma_1^1(y_1),0,\sigma_2^1(y_1))$.
\end{description}
\end{lemma}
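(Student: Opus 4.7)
The strategy is to combine invadability analysis at the boundary of $\Gamma_{2,1}$ with Lotka--Volterra stability theory in the interior. First, I would enumerate the boundary equilibria of \eqref{LV}: besides the origin and the prey-only fixed points (both unstable in the $v$-direction since $y_i \in \mathcal{V}$), the essential ones are $E_i = (\sigma^1_1(y_i), 0, \sigma^1_2(y_i))$ for $i=1,2$ (with the obvious relabelling of coordinates). Linearizing \eqref{LV} at $E_1$ shows that the transverse eigenvalue in the $u_2$-direction equals $F(y_1, y_2)$, hence is positive iff $y_2 \in \mathcal{L}_{y_1}$; the symmetric statement holds at $E_2$. I would then locate the candidate interior equilibrium algebraically by setting all three per-capita growth rates to zero: using $\beta_1 \ne \beta_2$, elimination between the two prey equations pins down $v$, then $u_1 + u_2$, and finally $(u_1, u_2)$ from the predator equation, yielding a unique triple $\sigma^2(y_1, y_2)$ whose coordinates are all positive precisely under the mutual invadability hypothesis of case (a).

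Cases (b) and (c) are symmetric, so I focus on (b). There $E_2$ has two negative transverse eigenvalues (hence is linearly stable in $\Gamma_{2,1}$) while $E_1$ is unstable. To promote local to global attraction I would use a mixed Lyapunov function
\begin{equation*}
V(u_1, u_2, v) = d_1 u_1 + d_2 \, \phi\!\left(\frac{u_2}{\sigma^1_1(y_2)}\right) + d_3 \, \phi\!\left(\frac{v}{\sigma^1_2(y_2)}\right), \qquad \phi(z) = z - 1 - \log z,
\end{equation*}
with positive weights $d_i$ tuned so that $\dot V \le 0$ on $\Gamma_{2,1}^+$; the non-invadability hypothesis $F(y_2, y_1) \le 0$ is exactly what makes the linear $u_1$-term dissipative, while the $\phi$-terms provide the standard Volterra dissipation at $E_2$. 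Since the algebraic step above rules out an interior equilibrium in this case, LaSalle's invariance principle forces the $\omega$-limit set to be $\{E_2\}$. Case (c) follows by interchanging indices.

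The main work lies in case (a), where one must upgrade the persistence result of \cite{RD2} (trajectories stay bounded away from $\partial \Gamma_{2,1}$) to convergence to $\sigma^2(y_1, y_2)$. The natural attempt is a Volterra--Lyapunov function $V(x) = \sum_i c_i \sigma^2_i \, \phi(x_i/\sigma^2_i)$ with diagonal weights $c_i > 0$ chosen so that $CA + A^T C$ is negative semidefinite, where $A$ is the interaction matrix of \eqref{LV}. Because the prey-prey block of $A$ has rank one (both prey compete identically for the shared resource $1 - u_1 - u_2$), one is essentially forced to pick $c_1 \beta_1 = c_2 \beta_2$ to make that block semidefinite, and must then choose $c_3$ to balance the remaining predator cross terms; this balancing is delicate and may fail for some parameter regimes. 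If admissible weights exist, LaSalle plus the uniqueness of $\sigma^2$ closes the case; otherwise I would fall back on a Dulac-type argument in the invariant interior to rule out periodic orbits and invoke index theory at the unique equilibrium. Identifying the admissible Lyapunov weights---or executing the Dulac alternative---is the principal obstacle I anticipate.
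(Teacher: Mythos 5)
Your reading of the boundary structure is right: the transverse eigenvalue of the Jacobian at $E_1=(\sigma_1^1(y_1),0,\sigma_2^1(y_1))$ in the $u_2$-direction is exactly $F(y_1,y_2)$, and the interior equilibrium can be pinned down algebraically using $\beta_1\neq\beta_2$. You have also put your finger on the real obstacle: the prey block of $A$ is $\begin{pmatrix}-\beta_1&-\beta_1\\-\beta_2&-\beta_2\end{pmatrix}$, so for any diagonal $C=\mathrm{diag}(c_1,c_2)$ the $2\times2$ principal minor of $CA+A^TC$ is $-(c_1\beta_1-c_2\beta_2)^2$, forcing $c_1\beta_1=c_2\beta_2$ for semidefiniteness; after that normalization one still must kill the predator cross-terms, which imposes $\alpha_1(c_3-c_1)=\alpha_2(c_3-c_2)$, and positivity of the resulting $c_3$ requires $(\alpha_1\beta_2-\alpha_2\beta_1)$ and $(\alpha_1-\alpha_2)$ to share a sign. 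This can fail, so $A$ is \emph{not} Volterra--Lyapunov stable for all parameters in the statement, and your direct Lyapunov route does not close. Note also that your case (b) ``mixed'' Lyapunov function hits the identical prey-block constraint, so it is not an escape from the difficulty, and your fallback --- ``a Dulac-type argument in the invariant interior'' --- is not a well-formed plan in dimension three: Bendixson--Dulac is intrinsically planar, and the higher-dimensional Busenberg--van den Driessche version has extra hypotheses you would still have to check. As written, this is a genuine gap.

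The paper routes around the Lyapunov-weight problem with Lotka--Volterra-specific machinery you did not use. The central tool is the index theorem for LV systems (Thm 13.4.4 in \cite{HS}): the indices of regular saturated equilibria in $\Gamma_{2,1}$ sum to $(-1)^3=-1$. In case (a), mutual invadability plus Thm 7.1 of \cite{RD2} gives coexistence, and Thms 13.3.1 and 13.5.2 of \cite{HS} give a unique regular equilibrium $\sigma^2\in\Gamma_{2,1}^+$; linear stability is then checked by Routh--Hurwitz on the explicit $J_{\sigma^2}=\mathrm{diag}(\sigma^2)A$, and global attraction is obtained from Thm 15.3.1 of \cite{HS}. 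In case (b), the index count rules out an interior equilibrium (it would create a second regular saturated equilibrium), so some coordinate must tend to $0$; the invadability pattern, fed into the repelling-function construction from the proof of Thm 7.1 in \cite{RD2}, then forces trajectories out of $\Gamma_{2,1}^+$ through the face $u_1=0$ minus its edges, on which $(\sigma_1^1(y_2),\sigma_2^1(y_2))$ is globally attracting by the $M=1$ analysis. The index theorem and the repelling-function argument are what your proposal is missing: together they replace the delicate weight-balancing with a clean ``no interior equilibrium, hence forced exit through the correct face'' argument.
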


Since inserting a mutant with the same birth rate as the resident is zero, the condition $\beta_1 \neq \beta_2$ does not impose any additional restrictions and saves us the headache of dealing with a scenario in which we have an infinite number of equilibria.

Before beginning the proof, we note that setting $u_3 = v$, we can rewrite \eqref{LV} in Lotka-Volterra form as
$$
\frac{du_i}{dt} = u_i(r_i + (Au)_i)
$$
where $r_i = \beta_i-1$, $i=1,2$, $r_3 = -\delta$, and
$$
A = \left[\begin{array}{ccc} -\beta_1 & -\beta_1 & -\alpha_1\\
-\beta_2 & -\beta_2  & -\alpha_{2}  \\
\alpha_{1} & \alpha_{2} & -1
\end{array}\right].
$$
If $f_i(u) = du_i/dt$, then the components of the Jacobian are given by
\beq \label{jacob}
\frac{df_i(u)}{du_j} = \delta_{ij}(r_i+(Au)_i) + u_iA_{ij}.
\eeq
Let $J_p$ denote the value of the Jacobian matrix $J = (df_i/du_j)$ evaluated at $p$. Following \cite{HS} (see pages 155, 159), we shall say that an equilibrium point $p$ for \eqref{LV} is \emph{regular} if $\det(J_p) \neq 0$ and \emph{saturated} if $r_i+(Ap)_i\leq 0$ for all $i$. Note that if we have an equilibrium $p \in \Gamma_{2,1}^+$, then $p$ is trivially saturated since in this case, $r_i +(Ap)_i = 0$ for all $i$. More generally, an equilibrium point $\sigma$ with $\sigma_i = 0$ for all $i \in I$ is saturated if it cannot be invaded by types $i \in I$. In particular,  $(\sigma_1^1(y_1),0,\sigma_2^1(y_1))$ and $(0,\sigma_1^1(y_2),\sigma_2^1(y_2))$ are saturated exactly when $y_2 \notin \mathcal{L}_{y_1}$ and $y_2 \notin \mathcal{U}_{y_1}$, respectively. The assumptions $y_1,y_2 \in \mathcal{V}$ and $\beta_1,\beta_2 > 1$ imply that $\sigma^0(y_1)$, $\sigma^0(y_2)$, and the origin are never saturated. It is easy to see that since $\beta_1 \neq \beta_2$, there can be no other possible equilibria $\sigma \notin \Gamma_{2,1}^+$.

Let
$$
ind(p) =  sign(\det(J_p))
$$
denote the \emph{index} of a regular equilibrium $p$. The index theorem for Lotka-Volterra equations (13.4.4 in \cite{HS}) tells us that if all saturated equilibria $p$ are regular, we must always have
\beq \label{sat}
\sum_{p: p \, saturated} ind(p) = (-1)^3 = -1.
\eeq
The key to the proof will be showing that \eqref{LV} has a unique saturated fixed point in all three cases (a) - (c). However, since it is not always true that a unique saturated fixed point is globally attracting (see pg. 195 in \cite{HS}), we need to work a little bit harder to get the result. To ease notation, we shall let $F_i$ denote the face in $\Gamma_{2,1}$ on which $u_i=0$ and $E_{i,j}$ denote the edge where $u_i=u_j=0$.

%We also recall that $A$ is VL-stable if there exists a diagonal matrix $D$ so that
%\beqx
%\sum_{i,j} d_i A_{ij} y_i y_j < 0
%\eeqx
%for all $y \in R^3$. If $A$ is VL-stable and there is a positive equilibrium for the associated Lotka-Volterra system, then the positive equilibrium is globally attracting by Theorem 15.3.1 in Hofbauer and Sigmund \cite{HS}

\begin{proof}
Suppose first that we are in case {\bf (a)} so that $(\sigma_1^1(y_1),0,\sigma_2^1(y_1))$ and $(0,\sigma_1^1(y_2),\sigma_2^1(y_2))$ are not saturated. Then Theorem 7.1 in \cite{RD2} implies we have coexistence and hence by Theorems 13.3.1 and 13.5.2 in \cite{HS}, \eqref{LV} has a unique regular equilibrium $\sigma^2 \in \Gamma_{2,1}^+$. To show that it is globally attracting, we will show that all eigenvalues of $J_{\sigma^2}$ have negative real parts. The conclusion that $\sigma^2$ is globally attracting on $\Gamma_{2,1}^+$ then follows by Theorem 15.3.1 in \cite{HS} (A is  Volterra-Lyapunov stable with $d_i=\sigma^2_i$). The Routh-Hurwitz (R-H) conditions (see pages 702-703 of Murray \cite{JM} for the version used here or \cite{AD} for an elementary proof) tell us that all eigenvalues of $J_{\sigma^2}$ will have negative real parts if (1) trace$(J_{\sigma^2})  < 0$, (2) $\det(J_{\sigma^2}) < 0$, and (3)
\beqx \label{rhcond}
\det(J_{\sigma^2}) > \text{trace}(J_{\sigma^2}) \Sigma_2
\eeqx
where $\Sigma_2$ is the sum of the $2\times 2$ principal minors of $J_{\sigma^2}$. But since $r+A\sigma^2 = 0$, substituting $\sigma^2$ into \eqref{jacob} yields
$$
J_{\sigma^2}  =  \left[\begin{array}{ccc} -\sigma^2_1 \beta_1 & -\sigma^2_1 \beta_1 & -\sigma^2_1 \alpha_1\\
-\sigma^2_2 \beta_2 & -\sigma^2_2 \beta_2  & -\sigma^2_2 \alpha_{2}  \\
 \sigma^2_3 \alpha_{1} &  \sigma^2_3 \alpha_{2} & -\sigma^2_3
\end{array}\right].
$$
so that the first R-H condition is obvious and the third follows from a simple algebraic calculation. The second condition follows from \eqref{sat} since $\sigma^2$ is the unique saturated equilibrium point for \eqref{LV} and is regular.

Suppose now we are in case {\bf (b)} so that $(0,\sigma_1^1(y_2),\sigma_2^1(y_2))$ is saturated, but $(\sigma_1^1(y_1),0,\sigma_2^1(y_1))$ is not saturated. If we let $\sigma = (0,\sigma_1,\sigma_2) = (0,\sigma_1^1(y_2),\sigma_2^1(y_2))$, then
$$
J_\sigma = \left[\begin{array}{ccc}  r_1+(A\sigma)_1 &0 & 0  \\-\sigma_1 \beta_2 & -\sigma_1 \beta_2 & -\sigma_1 \alpha_2\\
\sigma_2 \alpha_{1} &  \sigma_2 \alpha_{2} & -\sigma_2 \gamma
\end{array}\right].
$$
The assumptions in {\bf (b)} imply that $2 \succ 1,3$ and $1\nsucc 2,3$ so we must have $r_1 + (A\sigma)_1 < 0$. Therefore,
$$\det(J_\sigma)  = (r_1+(A\sigma)_1)(\beta_2\gamma + \alpha_2^2)\sigma_1\sigma_2 < 0$$
implying that $\sigma$ is regular. If coexistence was possible, then as in the proof of {\bf (a)}, we would have a regular equilibrium $\rho \in \Gamma_{2,1}^+$ and \eqref{LV} would have exactly two regular, saturated equilibria, violating \eqref{sat}. Therefore we know that $ u_i(t) \to 0$ as $t \to \infty$ for some $i=1,2,3$. But since we have the invadability conditions, $1 \succ 0$, $3 \succ 1$, $3 \succ 2$, $2 \succ 1,3$, and $1\nsucc 2,3$, the proof of Theorem 7.1 in \cite{RD2} tells us that there exists a repelling function for the set
$$F \equiv F_3 \cup E_{1,2} \cup F_2$$
and therefore we know that \eqref{LV} must leave $\Gamma_{2,1}^+$ through $F_1\backslash F $ on which $\sigma$ is globally attracting (Lemma 5.0 in \cite{RD2}). The proof of {\bf(c)} is identical after interchanging the roles of $y_1$ and $y_2$.
\end{proof}

Our next result describes the possible outcomes of adding a new species when two are already coexisting.

\begin{lemma} \label{3prey1pred}
Let $y_1, y_2,y_3 \in \mathcal{V}$, $y_2 \in \mathcal{L}_{y_1} \cap \mathcal{U}_{y_1}$, $\beta_1 \neq \beta_2$, and
$$(u_1(0),u_2(0),u_3(0),v(0)) \in \Gamma_{3,1}^+.$$
Then one of the following must be true
\begin{description}
\item[(a)] $y_3 \in \mathcal{L}_{y_1} \cap \mathcal{L}_{y_2}$ and $y_3 \notin \mathcal{U}_{y_1} \cup \mathcal{U}_{y_2}$ in which case the solution to \eqref{LV} converges to $(0,0,\sigma_1^1(y_3),\sigma_2^1(y_3))$.
\item[(b)]  $y_3 \in \mathcal{U}_{y_1} \cap \mathcal{U}_{y_2}$ and $y_3 \notin \mathcal{L}_{y_1} \cup \mathcal{L}_{y_2}$ in which case the solution to \eqref{LV} converges to $(\sigma_1^2(y_1,y_2),\sigma_2^2(y_1,y_2),0,\sigma_3^2(y_1,y_2))$.
\item[(c)] Neither {\bf (a)} nor {\bf (b)} is satisfied in which case either $u_1(t) \to 0$ or $u_2(t) \to 0$.
\end{description}

\end{lemma}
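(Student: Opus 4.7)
The strategy mirrors the proof of Lemma \ref{2prey1pred}, now applied to the four-dimensional Lotka-Volterra system on $\Gamma_{3,1}$ obtained by appending a third prey row/column to the matrix $A$. The relevant structural identity is the index theorem for Lotka-Volterra systems (Theorem 13.4.4 of \cite{HS}), which here reads $\sum_{p \text{ saturated}} \mathrm{ind}(p) = (-1)^4 = 1$. The plan is (i) to enumerate all candidate non-negative equilibria and identify which are saturated under the hypotheses of each case; (ii) use the index identity to rule out an interior equilibrium and to pin down a unique saturated boundary equilibrium in cases (a) and (b); and (iii) promote that saturated equilibrium to a global attractor by combining the Routh-Hurwitz/Volterra-Lyapunov machinery already used in Lemma \ref{2prey1pred} with the repelling-function construction on the boundary from \cite{RD2} (Lemma 5.0 and the proof of Theorem 7.1).

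\textbf{Enumeration and the key invadability subclaim.} The candidate equilibria in $\Gamma_{3,1}$ are the origin, the three $\sigma^0(y_i)$, the three $\sigma^1(y_i)$, any positive $\sigma^2(y_i,y_j)$, and a possible interior fixed point $\sigma^3$. Because each $\beta_i > 1$ and $y_i \in \mathcal{V}$, neither the origin nor any $\sigma^0(y_i)$ is saturated. Whether $\sigma^2(y_i,y_j)$ even exists as a positive equilibrium is decided by Lemma \ref{2prey1pred}(a): it does so exactly when $y_i \in \mathcal{L}_{y_j} \cap \mathcal{U}_{y_j}$, and otherwise the two-prey sub-system converges to an $\sigma^1$ equilibrium. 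The crucial new ingredient is the following observation about the coexistence equilibrium $\sigma^2 = \sigma^2(y_1,y_2)$: setting $S = 1-\sigma^2_1-\sigma^2_2$ and $P = \sigma^2_3$, the invasion exponent of a candidate prey with trait $(\alpha,\beta)$ is the affine function $G(\alpha,\beta) = \beta S - \alpha P - 1$; since $G(y_1) = G(y_2) = 0$, the zero set $\{G = 0\}$ is the straight line through $y_1, y_2$ in trait space, and invadability of $y_3$ into $\sigma^2$ is determined by which side of this line $y_3$ lies on. Comparing $\{G = 0\}$ with the curves $L_{y_i}, U_{y_i}$ (which are tangent at $y_i$ with common normal $\mathcal{N}(y_i)$ computed in \eqref{vectorN}) and using the explicit formulas for $g(y_i,\cdot), h(y_i,\cdot)$, one checks that the hypothesis $y_3 \in \mathcal{L}_{y_1} \cap \mathcal{L}_{y_2}$ in case (a) forces $G(y_3) > 0$, while $y_3 \in \mathcal{U}_{y_1} \cap \mathcal{U}_{y_2}$ in case (b) forces $G(y_3) < 0$.

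\textbf{Case-by-case conclusion.} In case (a), $\sigma^2(y_1,y_3)$ and $\sigma^2(y_2,y_3)$ are not positive equilibria (Lemma \ref{2prey1pred}(b) applied pairwise); $\sigma^1(y_1)$, $\sigma^1(y_2)$ and $\sigma^2(y_1,y_2)$ are each invaded by $y_3$ (the last by the subclaim), while $\sigma^1(y_3)$ is saturated because $y_3 \notin \mathcal{U}_{y_1} \cup \mathcal{U}_{y_2}$ translates to $1, 2 \nsucc 3$. The index identity then forces $\sigma^1(y_3)$ to be the unique saturated equilibrium and rules out an interior $\sigma^3$, so Theorem 13.3.1 of \cite{HS} precludes coexistence of all four species. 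A repelling function on the union of boundary faces $F_3 \cup E_{1,3} \cup E_{2,3} \cup \cdots$ built from the invadability relations $3 \succ 1, 2$, $v \succ 3$, and $1, 2 \nsucc 3$ (following Theorem 7.1 of \cite{RD2}) combined with Lemma 5.0 of \cite{RD2} then makes $\sigma^1(y_3)$ globally attracting on $\Gamma_{3,1}^+$. Case (b) is dual: neither $\sigma^2(y_1,y_3)$ nor $\sigma^2(y_2,y_3)$ is positive (Lemma \ref{2prey1pred}(c)), $\sigma^1(y_3)$ is invaded by $y_1, y_2$, and by the subclaim $y_3$ cannot invade $\sigma^2(y_1,y_2)$, which is therefore the unique saturated equilibrium; the Routh-Hurwitz verification used in Lemma \ref{2prey1pred} together with Theorem 15.3.1 of \cite{HS} upgrades this to global attraction on the face $F_3$. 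For case (c), I enumerate the remaining configurations, in each one show that at most one of the $\sigma^2(y_i,y_j)$ is a positive equilibrium and that $\sigma^1(y_3)$ is not saturated (some $y_i$ invades), use the index identity to exclude an interior fixed point, and then argue that the unique possible attractor lies on a face where one of $u_1, u_2$ has already vanished.

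\textbf{Main obstacle.} The technical heart of the argument is the geometric/algebraic step in the middle paragraph: verifying that the linear invasion function $G$ is strictly positive on $(\mathcal{L}_{y_1} \cap \mathcal{L}_{y_2}) \setminus (\mathcal{U}_{y_1} \cup \mathcal{U}_{y_2})$ and strictly negative on $(\mathcal{U}_{y_1} \cap \mathcal{U}_{y_2}) \setminus (\mathcal{L}_{y_1} \cup \mathcal{L}_{y_2})$. The tangency of $U_{y_i}$ to $L_{y_i}$ at $y_i$ and the explicit formulas for $g(y_i,\alpha)$ and $h(y_i,\alpha)$ make this a concrete but delicate comparison of one line with four curves. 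The case-(c) bookkeeping is also cumbersome but becomes routine once the two clean cases have been resolved.
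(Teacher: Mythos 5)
Your proposal misses the key observation that makes the paper's proof short: when the three-prey, one-predator system is written in Lotka--Volterra form with a $4\times 4$ interaction matrix $A$, the first three columns are $(-\beta_1,-\beta_2,-\beta_3,\alpha_i)^T$, so column differences all lie along the last coordinate axis and $\det(A)=0$. By Theorem 13.5.2 of Hofbauer--Sigmund this immediately precludes coexistence of all four species, after which the paper simply says that sorting out which prey die follows ``the same idea as the proof of Lemma~\ref{2prey1pred}.'' Your index-theorem route to the same conclusion is not valid as written: the index identity requires every saturated equilibrium to be regular, but if an interior equilibrium $\sigma^3$ existed then by \eqref{jacob} its Jacobian would be $\mathrm{diag}(\sigma^3)\,A$ with $\det J_{\sigma^3}=(\prod_i\sigma^3_i)\det(A)=0$, so it would be non-regular and the hypothesis of Theorem 13.4.4 would fail. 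The index theorem therefore cannot ``rule out an interior $\sigma^3$''; you must first establish its non-existence (which is exactly what $\det(A)=0$ buys you), and only then apply the index theorem to the boundary equilibria.

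Beyond that, the ``key invadability subclaim'' --- that $G(y_3)>0$ on $(\mathcal{L}_{y_1}\cap\mathcal{L}_{y_2})\setminus(\mathcal{U}_{y_1}\cup\mathcal{U}_{y_2})$ and $G(y_3)<0$ on the dual region --- is genuinely needed (it is what makes $\sigma^2(y_1,y_2)$ fail to be saturated in case (a)), but you only assert ``one checks'' and defer it as the technical heart. The geometry is subtle because $\{G=0\}$ is a line through both $y_1$ and $y_2$ that crosses each $L_{y_i}$ transversally there, while $U_{y_i}$ is merely tangent to $L_{y_i}$ at $y_i$; establishing the required sign condition amounts to a concrete comparison you have not carried out. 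Your overall strategy for the ``sorting out'' step (enumeration of boundary equilibria, saturation via invadability, repelling functions from Theorem 7.1 and Lemma 5.0 of \cite{RD2}) does match what the paper implicitly intends, so once you add the $\det(A)=0$ observation at the front and actually verify the $G$-sign claim, the argument should close up.
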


\begin{proof}
It is easy to check that if we have $M=3$ in \eqref{LV}, relabel the predator's density as $u_4$ and write \eqref{LV} in Lotka-Volterra form, then $\det(A) = 0$ so that there can be no coexistence of the three prey types by Theorem 13.5.2 in \cite{HS}. Therefore, at least one of the types dies out. Which one(s) can be sorted out using the same idea as the proof of Lemma \ref{2prey1pred}.
\end{proof}

To see why we do not need to be concerned with case {\bf (c)}, we note that because of the tangency of $U_{y_1}$ and $L_{y_1}$, the chance of inserting a mutant which can coexist with a current resident prey is always of order
$$
\ep^{-2} \int_0^\ep x^2 dx = O(\ep).
$$
But then the probability of inserting a third prey which can coexist with two resident prey is also $O(\ep)$ and therefore, we must wait $O(1/\ep^2)$ time steps until the first time we encounter the situation in Lemma \ref{3prey1pred}, {\bf (c)}. Of course, we will still get convergence to an equilibrium that can be determined as in Lemma \ref{2prey1pred} provided $\beta_3 \neq \beta_1, \beta_2.$

%We conclude this section with a summary of the possible ecological outcomes that can occur after each mutation.
%\begin{description}
%\item[(i)] If $y_1(n) > 0$ and $y_2(n)=0$, then to determine $\mathbf{y}(n+1)$, we check if $y_{new} \in \mathcal{L}_{y_1(n)} \cap \mathcal{U}_{y_1(n)}$. If so, then we take $y_1(n+1) = y_1(n)$ and $y_2(n+1) = y_{new}$. If not, then we take $y_1(n+1)= y_1(n+1)$ if $y_{new} \in \mathcal{U}_{y_1(n)}$, $y_1(n+1) = y_{new}$ if $y_{new} \in \mathcal{L}_{y_1(n)}$, and $y_2(n+1)=0$.
%\item[(ii)] If $y_1(n), y_2(n) > 0$, then we have three possibilities. If $y_{new} \in \mathcal{U}_{y_1(n)} \cap \mathcal{U}_{y_2(n)}$, but $y_{new} \notin \mathcal{L}_{y_1(n)} \cup \mathcal{L}_{y_2(n)}$ then we encounter the situation in Lemma \ref{3prey1pred}, {\bf (b)} so we let $\mathbf{y}(n+1) = \mathbf{y}(n)$. If $y_3 \in \mathcal{L}_{y_1} \cap \mathcal{L}_{y_2}$, but $y_3 \notin \mathcal{U}_{y_1} \cup \mathbf{U}_{y_2}$, then we encounter the situation in Lemma \ref{3prey1pred}, {\bf (a)} and we let $y_1(n+1) = y_{new}$ and $y_2(n+1) = 0$. If $y_{new}$ is somewhere else, then we encounter the situation in  Lemma \ref{3prey1pred}, {\bf (c)} and we know either $y_1$ or $y_2$ will die out. We let $n_\ep$ denote the first time this occurs and will show below that $\ep n_\ep \to 0$ as $\ep \to 0$.
%\end{description}

\subsection{Proof of Theorem \ref{preymain}}

Throughout thus section, we shall use $C=C_T$ to denote a positive constant which depends on $T$ and may change from line to line. Write
$$
F^\ep(y_1) = \frac{1}{2\pi \ep^2} \int_{-\ep}^\ep f(y_1,x)dx
$$
where $f(y_1,\alpha) = g(y_1,\alpha)- h(y_1,\alpha) \geq 0.$ We also define
\beqx
p^\ep(t) =\begin{cases} F^\ep(Y_1^\ep(t)) & \text{ if } Y_2^\ep (t) = 0 \\
\max(F^\ep(Y_1^\ep (t)),F^\ep(Y_2^\ep(t))) & \text{ if } Y_2^\ep(t) \neq 0.
\end{cases}
\eeqx
For $Y_1^\ep(t), Y_2^\ep(t)\in \mathcal{V}$, $p^\ep$ gives the probability of inserting a new type that can coexist when only one resident type is present and an upper bound on the probability a new type can coexist with one of the resident types when two resident types are present. Note that the tangency of $g(y_1,\cdot), h(y_1,\cdot)$ and Taylor's theorem imply that
\beq \label{fepslim}
\ep^{-1} F^\ep(y_1) \to c \phi(y_1)
\eeq
for any $y_1 \in \mathcal{V}$ and some constant $c >0$ where
$$
\phi(y_1) = \frac{\partial^2(g-h)}{\partial \alpha^2}(y_1,\alpha_1)
$$
is a continuous function of $y_1$. Therefore, if $Y_1^\ep(t), Y_2^\ep(t) \in K$, $K$ bounded, then there exists $C_K > 0$ so that $p^\ep(t) \leq C_K \ep$.

Choose an open, bounded set $K_1 \subset \mathcal{V}$ and a compact set $K_2 \subset K_1$ so that $y_1(t) \in K_2$ for all $t \leq T$. The existence of $K_1$ is guaranteed since on the boundary of the viable region, $\beta=\alpha/(\alpha-\delta)$ so that the slope of $U_{y_1}$ at $y_1$ is 0 implying that $\mathcal{N}(y_1)$ points straight up, and it is impossible for $y_1(t)$ to leave the viable region. Let $\rho >0$ be small enough so that $K_2 + \rho \subset K_1$ and define
$$\tau = \inf\{t: \mathbf{Y}^\ep(t) \notin (K_2+\rho) \times (K_2+\rho)\}$$
as the first time $Y_1^\ep(t)$ or $Y_2^\ep(t)$ leaves $K_2 + \rho$. Then $p^\ep(t\wedge \tau) \leq C \ep$, $\forall t \leq T$. Since mutations occur at rate $1/\ep$, it follows that the expected number of times before $T \wedge \tau$ that there is one prey type and an inserted type coexists is $\leq C$. If two prey types coexist, Lemma \ref{3prey1pred} implies that with probability $\geq 1-C \ep$, the next time a new type is inserted in $\mathcal{L}_{y_1} \cap \mathcal{L}_{y_2}$, it will replace the two coexisting types. Therefore, if $\ep$ is small, the amount of time during which two types coexist is approximately exponential with mean $2\ep$ so that
\beq \label{y2neg}
|\{t\leq T: Y_2^\ep(t\wedge \tau) > 0\}| \to 0
\eeq
a.s.~as $\ep \to 0$ and hence, we can ignore these isolated episodes when studying the evolution of $Y_1^\ep(t\wedge \tau)$.

%Since mutations always occur to $y_{new} \in B_\ep(Y_i^\ep)$, for some $i=1,2$, we have
%$$
%d(Y_i^\ep(t),Y_1^\ep(0)) \leq 2 \ep \times \text{ Number of Mutations in } [0,T].
%$$
%for $i=1,2$ and since the expected number of mutations in $[0,T]$ is $T/\ep$, there exists a bounded set $K = K_T \subset \mathbb{R}^2$ so that $P(\tau_K^\ep \leq T) \to 0$ as $\ep \to 0$ where
%$$\tau_K^\ep = \inf\{t: \mathbf{Y}^\ep(t) \notin K \times K\}$$
%is the first time $Y_1^\ep(t)$ or $Y_2^\ep(t)$ leaves $K$.

When there is no coexistence, mutations in the direction of $L_{y_1}$ leave the resident type unchanged and mutations in the direction of ${\cal N}(y_1)$ replace the resident so that the infinitesimal mean of $Y_1^\ep(t\wedge \tau)$ is given by
\beq \label{infmean}
b(y_1) = \frac{2}{3\pi} {\cal N}(y_1)
\eeq
where the $2/3\pi$ comes from the fact that if we choose a point at random from the upper half of the ball of radius 1 in the $(\alpha,\beta)$ plane, then the $\beta$ component has density $(4/\pi) \sqrt{1-\beta^2}$ and hence mean
$$
\frac{4}{\pi} \int_0^1 \beta \sqrt{1-\beta^2} \,dy  =\frac{4}{3\pi}.
$$
(\ref{infmean}) then follows by noting that choices from the half of the ball above $L_{y_1}$ occur with probability 1/2. It is clear from the scaling that the entries in the infinitesimal covariance are of order $\ep$ and therefore, the infinitesimal mean and covariance of $Y_1^\ep(\cdot \wedge \tau)$ converge to $b(y_1)$ and $a(y_1) = 0$ respectively. Since $b$ is Lipschitz continuous, the martingale problem for $(a,b)$ is well posed so that convergence of $Y_1^\ep(\cdot \wedge \tau)$ to $y_1$  follows from Theorem 7.4.1 in Ethier and Kurtz \cite{EK}. But then we can choose $\rho$ small enough so that $P(\tau \leq T) \to 0$ and we obtain \eqref{FAD}.

It remains to prove that
$$N_t^\ep = |\{s \leq t : Y_2^\ep(s-)=0, \, Y_2^\ep(s) \neq 0\}|$$
converges to a nonhomogeneous Poisson process. Since $F^\ep(Y_1^\ep(t))$ gives the jump probabilities for $N_t^\ep$ when $Y_2^\ep(t)=0$, the compensator for $N_t^\ep$ is given by
$$
A_t^\ep = \int_0^t \mathbf{1}_{\{Y_2^\ep(s)=0\}}\ep^{-1}F^\ep(Y_1^\ep(s)) ds.
$$
\eqref{fepslim}, \eqref{y2neg}, and \eqref{FAD} then imply that
\beq \label{poislim}
A_t^\ep \to m(t) \equiv \int_0^t  c \phi(y_1(s))ds.
\eeq
$m(t)$ is continuous and deterministic so we conclude from Theorem 1 in Brown \cite{TB} that $N^\ep \Rightarrow N$ where $N$ is a nonhomogeneous Poisson Process with mean function $m(t)$.

\eopt

\section{Multiple Predator ODE facts} \label{predatorsec}

%$$
%\frac{dv_i}{dt} = v_i(r_i + (Av)_i)
%$$
%for $i=1,2,...,N+1$ where $r_i = -\delta_i$, $A_{ii}=-1$, and $A_{i,N+1}= \alpha_i = -A_{N+1,i}$ for $i=1,...,N$, $A_{N+1,N+1} = -\beta$, and $r_{N+1} = \beta-1$.

The goal of this section is the derivation of Lemmas \ref{translaw} and \ref{lyap} which together imply Proposition \ref{odemain}. The first result gives the algebraic condition for existence of positive equilibrium densities and the second proves convergence to equilibrium. See Section \ref{predatorsecintro} for relevant notation.

In the absence of predators, the prey have equilibrium density $\sigma^0 = r/\beta$ where $r = \beta-1 > 0$ by assumption. Suppose we wish to find a positive equilibrium $\sigma^k = (\sigma_0^k,\sigma_1^k,\dotso,\sigma_k^k)$ on the face
$$
\Gamma_{1,k} = \{v \in \Gamma_{1,N} :  v_{k+1}=\cdots v_N = 0\}.
$$
Then, solving the equations $\alpha_j \sigma_0^k - \delta_j - \sigma_j^k = 0$ for $\sigma_j^k$, $j=1,...,k$ we obtain
\beqx
\sigma_j^k = \sigma_j^k(x_1,\dotso,x_k) =  \alpha_j \sigma_0^k - \delta_j
\eeqx
and substituting these expressions into the equation $r - \beta \sigma_0^k - \sum_{j=1}^k \alpha_j \sigma_j^k = 0$ yields
$$
r - \beta \sigma_0^k = \sum_{j=1}^k \alpha_j^2 \sigma_0^k - \sum_{j=1}^k \alpha_j \delta_j
$$
We can conclude that
\beqx
\sigma_0^k = \sigma_0^k(x_1,\dotso,x_k) = \frac{r + \sum_{i=1}^k \alpha_i\delta_i}{\beta + \sum_{i=1}^k \alpha_i^2} > 0
\eeqx
for all $k \geq 0$. To determine when $\sigma_j^k> 0$, $1 \leq j \leq k$, write $S_k = \sum_{i=1}^k \alpha_i^2$ and
$$
\alpha_j \sigma_0^k
= \frac{\alpha_j r + \alpha_j^2\delta_j + \alpha_j \sum_{i\neq j} \alpha_i\delta_i}{\beta + S_k}.
$$
Adding $\delta_j - \delta_j(\beta+S_k)/(\beta+S_k)$, the above
\begin{align*}
& = \delta_j + \frac{\alpha_j r -\beta\delta_j + \sum_{i\neq j} (\alpha_j \alpha_i\delta_i - \alpha_i^2\delta_j)}{\beta + S_k}\\
& = \delta_j + \frac{(\beta + \sum_{i\neq j} \alpha_i^2)(\alpha_j \sigma_0^{k-1}(x_1,...,x_{j-1},x_{j+1},...,x_k) - \delta_j)}{\beta + S_k}
\end{align*}
since $(\beta + \sum_{i\neq j} \alpha_i^2)\sigma_0^{k-1} = r + \sum_{i \neq j} \alpha_i\delta_i$.
From this it follows that $\sigma_j^k$ will be positive if and only if
\beq \label{noninvade}
\sigma_0^{k-1}(x_1,...,x_{j-1},x_{j+1},...,x_k) > \ell_j.
\eeq
where $\ell_j = \delta_j/\alpha_j$ is the characteristic ratio for predator $x_j$.

\begin{lemma} \label{translaw}
Suppose $x_1,...,x_k$ are ordered by increasing characteristic ratios. Then $\sigma_k^k> 0$ if and only if
\beq \label{master}
 \beta \ell_k + \sum_{j=1}^k \alpha_j^2 (\ell_k - \ell_j) < r
\eeq
and if \eqref{master} is satisfied, then $\sigma_j^k >0$ for all $j \leq k$.
\end{lemma}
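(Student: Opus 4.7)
The plan is to exploit the identity $\sigma_j^k = \alpha_j \sigma_0^k - \delta_j$ already derived in this section, which collapses positivity of each component into a one-sided inequality on the single quantity $\sigma_0^k$.

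First I would observe that from $\sigma_j^k = \alpha_j \sigma_0^k - \delta_j$ we have directly, for each $j \leq k$,
\begin{equation*}
\sigma_j^k > 0 \iff \sigma_0^k > \delta_j/\alpha_j = \ell_j.
\end{equation*}
Next I would verify that the master condition \eqref{master} is exactly the algebraic rewriting of $\sigma_0^k > \ell_k$. Using the explicit formula
\begin{equation*}
\sigma_0^k = \frac{r + \sum_{i=1}^k \alpha_i\delta_i}{\beta + \sum_{i=1}^k \alpha_i^2},
\end{equation*}
the inequality $\sigma_0^k > \ell_k$ is equivalent (after clearing the positive denominator) to $r + \sum_i \alpha_i\delta_i > \ell_k(\beta + \sum_i \alpha_i^2)$, and substituting $\alpha_i\delta_i = \alpha_i^2 \ell_i$ rearranges this to
\begin{equation*}
r - \beta \ell_k > \sum_{i=1}^k \alpha_i^2(\ell_k - \ell_i),
\end{equation*}
which is exactly \eqref{master}. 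Applying the first step with $j=k$ gives the ``if and only if'' half of the lemma.

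Finally, for the second half, the ordering hypothesis $\ell_1 \leq \ell_2 \leq \cdots \leq \ell_k$ is used in an essential but trivial way: if \eqref{master} holds then $\sigma_0^k > \ell_k \geq \ell_j$ for every $j \leq k$, so the equivalence in the first step yields $\sigma_j^k > 0$ for all $j \leq k$. There is no real obstacle here; the content of the lemma is the algebraic observation that, once predators are ordered by characteristic ratio, the single threshold ``$\sigma_0^k$ exceeds the largest $\ell_j$'' automatically forces all other thresholds, so the binding coexistence constraint is the one coming from the predator of largest characteristic ratio.
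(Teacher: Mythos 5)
Your proof is correct, and it is cleaner than the paper's. The paper routes the positivity of $\sigma_j^k$ through the invadability criterion \eqref{noninvade}, namely $\sigma_j^k > 0$ iff $\sigma_0^{k-1}(x_1,\dots,x_{j-1},x_{j+1},\dots,x_k) > \ell_j$, which required the block of algebra culminating in the identity $\alpha_j\sigma_0^k - \delta_j = \delta_j + \bigl(\beta + \sum_{i\neq j}\alpha_i^2\bigr)\bigl(\alpha_j\sigma_0^{k-1}(\hat{x}_j) - \delta_j\bigr)/(\beta + S_k)$, and then clears denominators in $\ell_k < \sigma_0^{k-1}(x_1,\dots,x_{k-1})$ to reach \eqref{master}. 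You instead read the positivity condition $\sigma_0^k > \ell_j$ straight off the formula $\sigma_j^k = \alpha_j\sigma_0^k - \delta_j$ and clear denominators in $\sigma_0^k > \ell_k$; the substitution $\alpha_i\delta_i = \alpha_i^2\ell_i$ then produces \eqref{master} in one line, and since the $i=k$ term $\alpha_k^2(\ell_k-\ell_k)$ vanishes, the two routes land at exactly the same inequality. Your version has the advantage of making the monotonicity step trivial: since all the $\sigma_j^k$ share the single threshold $\sigma_0^k$, the ordering $\ell_j \le \ell_k$ immediately forces $\sigma_j^k > 0$ for all $j \le k$, whereas the paper has to argue that replacing $\ell_k$ by $\ell_j$ on the left of \eqref{master} preserves the inequality and then "reverse the algebra." The paper's detour is not wasted effort, though: the quantity $\sigma_0^{k-1}(\hat{x}_j)$ is the equilibrium prey density when predator $j$ is absent, so \eqref{noninvade} is the mutual-invadability condition that is used conceptually elsewhere in the paper and in the definition of the evolution process. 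Your proof establishes the lemma more economically but does not surface that interpretation.
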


\begin{proof} \eqref{noninvade} implies that $\sigma_k^k > 0$ if and only if
\beq \label{temp}
\ell_k < \sigma_0^{k-1}(x_1,\dotso,x_{k-1}) = \frac{ r + \sum_{j=1}^{k-1} \alpha_j^2 \ell_j}{\beta + \sum_{j=1}^{k-1} \alpha_j^2}
\eeq
where we have used the definition of $\ell_j = \delta_j/\alpha_j$ on the right. Multiplying both sides by the denominator of the right and then rearranging terms, we obtain \eqref{master} (since $\ell_k-\ell_k = 0$). Now suppose that \eqref{master} holds. Then since $\ell_k > \ell_j$, for all $j=1,...,k-1$, \eqref{master} also holds if we replace $\ell_k$ by $\ell_j$, $j < k$ on the left and reversing the algebra used to derive \eqref{master} from \eqref{temp} shows that this is equivalent to $\sigma_j^k > 0$, proving the result.
\end{proof}

\begin{lemma} \label{lyap}
Suppose we have a collection of predators $x_1,...,x_N$ ordered by increasing $\ell$'s and let $k \leq N$ be the largest integer for which \eqref{master} is satisfied (with the convention that $k=0$ if \eqref{master} fails for all $k \leq N$). Then $\sigma = (\sigma_0^k,\sigma_1^k,\ldots,\sigma_k^k, 0,\dotso,0)$ is a globally attracting fixed point on $\Gamma_{1,N}^+$ with Lyapunov function
\begin{equation*}
V(u,v_1,...,v_N) = u - \sigma_0^k \log u + \sum_{i=1}^k (v_i - \sigma_i^k \log v_i) + \sum_{i=k+1}^N v_i.
\end{equation*}
\end{lemma}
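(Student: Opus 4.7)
The plan is to verify that $V$ is a strict Lyapunov function for \eqref{LV} on $\Gamma_{1,N}^+$ with its global minimum only at $\sigma$, then invoke LaSalle's invariance principle. The first step is to differentiate $V$ along trajectories:
\[
\dot V = \dot u\Bigl(1-\frac{\sigma_0^k}{u}\Bigr) + \sum_{i=1}^k \dot v_i\Bigl(1-\frac{\sigma_i^k}{v_i}\Bigr) + \sum_{i=k+1}^N \dot v_i.
\]
Writing $\tilde u = u - \sigma_0^k$ and $\tilde v_i = v_i - \sigma_i^k$ for $i\le k$, the equilibrium identities $r = \beta\sigma_0^k + \sum_{j\le k}\alpha_j\sigma_j^k$ and $\delta_i = \alpha_i\sigma_0^k - \sigma_i^k$ (for $i\le k$) let one rewrite $r-\beta u - \sum_j \alpha_j v_j = -\beta\tilde u - \sum_{j\le k}\alpha_j\tilde v_j - \sum_{j>k}\alpha_j v_j$ and $\alpha_i u - \delta_i - v_i = \alpha_i\tilde u - \tilde v_i$ for $i\le k$.

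Substituting into $\dot V$, the cross terms $\tilde u\sum_{j\le k}\alpha_j\tilde v_j$ cancel between the first two sums. Combining the remaining piece $-\tilde u\sum_{j>k}\alpha_j v_j$ with the last sum yields $\sum_{j>k} v_j(\alpha_j\sigma_0^k-\delta_j-v_j) = \sum_{j>k}\bigl[\alpha_j v_j(\sigma_0^k-\ell_j) - v_j^2\bigr]$, using $\delta_j=\alpha_j\ell_j$. Hence
\[
\dot V = -\beta\tilde u^2 - \sum_{i=1}^k \tilde v_i^2 + \sum_{j>k}\bigl[\alpha_j v_j(\sigma_0^k-\ell_j) - v_j^2\bigr].
\]
The key structural input enters next: if $k<N$, failure of \eqref{master} at $k+1$ is, after the algebra of Lemma~\ref{translaw}, equivalent to $\ell_{k+1}\ge\sigma_0^k(x_1,\dotso,x_k)$. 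Because the predators are ordered by increasing characteristic ratios, $\ell_j\ge\ell_{k+1}\ge\sigma_0^k$ for every $j>k$, so each summand is bounded by $-v_j^2\le 0$. Therefore $\dot V\le 0$ on $\Gamma_{1,N}^+$, with equality forcing $\tilde u = 0$, $\tilde v_i = 0$ for $i\le k$, and $v_j = 0$ for $j>k$, i.e., $(u,v)=\sigma$.

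Finally, $x\mapsto x-a\log x$ (with $a>0$) is bounded below on $(0,\infty)$ and blows up at $0$ and $+\infty$, so $V$ is bounded below on $\Gamma_{1,N}^+$ and its sublevel sets stay away from the faces $\{u=0\}$ and $\{v_i=0:\,i\le k\}$ as well as from infinity. Thus every trajectory starting in $\Gamma_{1,N}^+$ is relatively compact in $\Gamma_{1,N}^+$, and LaSalle's invariance principle together with $\{\dot V = 0\}=\{\sigma\}$ yields convergence to $\sigma$ from any initial point in $\Gamma_{1,N}^+$. The main obstacle is really only the algebraic bookkeeping that makes the cross terms cancel; the conceptual content beyond Lemma~\ref{translaw} is just the observation that maximality of $k$ is equivalent to $\sigma_0^k \le \ell_{k+1}$, which is exactly what prevents the non-coexisting predators from invading.
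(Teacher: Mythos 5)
Your proof is correct and follows the same route as the paper: differentiate $V$, use the equilibrium identities to collapse $\dot V$ into $-\beta\tilde u^2-\sum_{i\le k}\tilde v_i^2-\sum_{j>k}\bigl(v_j(\delta_j-\alpha_j\sigma_0^k)+v_j^2\bigr)$, and invoke maximality of $k$ through Lemma~\ref{translaw}/\eqref{noninvade} to get $\ell_j\ge\sigma_0^k$ for $j>k$, making every term nonpositive with equality only at $\sigma$. One small imprecision in the wrap-up: when $k<N$, trajectories are not relatively compact in $\Gamma_{1,N}^+$ itself, since the limit $\sigma$ has $\sigma_j=0$ for $j>k$ and so lies on the boundary of $\Gamma_{1,N}^+$; the sublevel sets of $V$ (intersected with the forward-invariant $\Gamma_{1,N}$) are instead compact in $\{u>0,\ v_i>0\ (i\le k),\ v_j\ge 0\ (j>k)\}$, and that is the region on which LaSalle's invariance principle should be applied.
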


\begin{proof} Differentiating $V$ yields
\begin{eqnarray*}
  \frac{dV}{dt} &=& (u - \sigma_0^k)(r - \beta u - \sum_{i=1}^k \alpha_iv_i - \sum_{i=k+1}^N \alpha_iv_i) \\
  & & + \sum_{i=1}^k (v_i - \sigma_i^k)( -\delta_i -v_i +  \alpha_i u) +  \sum_{i=k+1}^N v_i( -\delta_i -v_i +  \alpha_i u) \\
  &=& - \beta (u - \sigma_0^k)^2 - \sum_{i=1}^k (v_i - \sigma_i^k)^2 - \sum_{i=k+1}^N v_i(\delta_i-\alpha_i \sigma_0^k)) - \sum_{i=k+1}^N v_i^2.
\end{eqnarray*}
If $(u,v_1,...,v_N) = \sigma$, this expression is 0 and otherwise it is $< 0$ since Lemma \ref{translaw} and \eqref{noninvade} imply that
$$
\ell_i \geq \ell_{k+1} > \sigma_0^k
$$
for all $i \geq k+1$ so that all terms on the left are negative.

\end{proof}

\section{Proof of Theorem \ref{deltafixedmain}} \label{deltafixedsec}

In this section, we prove Theorem \ref{deltafixedmain} for the APEP as defined in Section \ref{deltafixedintro} and use the notation defined there. We also define the Markov chain $Y_n = (\alpha_{\min}(n),\Delta_n)$.

If $\delta_j=1$ for all $j$, \eqref{master} with $k=N$ can be rewritten as:
\beq \label{master2}
\sum_{j=1}^N \frac{\alpha_j}{\alpha_N} (\alpha_j - \alpha_N) < r - \frac{\beta}{\alpha_N}.
\eeq
Our first step is to show

\begin{lemma} \label{tight}
The sequence $N_n$ is tight.
\end{lemma}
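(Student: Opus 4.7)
The plan is a Foster-Lyapunov drift argument applied to $V_n = N_n$. Since each step introduces one mutant and retains the longest coexisting prefix of the re-sorted list, $N_{n+1} - N_n \leq 1$ always, so it suffices to find $K_0 < \infty$ and $c > 0$ such that
\begin{equation*}
\EE[N_{n+1} - N_n \mid \mathcal{F}_n] \leq -c \quad \text{whenever } N_n \geq K_0.
\end{equation*}

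The static starting point is that $\alpha_j \geq \alpha_N$ for every $j$, so \eqref{master2} gives
\begin{equation*}
\sum_{j=1}^N (\alpha_j - \alpha_N) \leq \sum_{j=1}^N \frac{\alpha_j}{\alpha_N}(\alpha_j - \alpha_N) < r - \frac{\beta}{\alpha_N} \leq r.
\end{equation*}
Thus the differences $d_j := \alpha_j - \alpha_N$ always satisfy $\sum_j d_j < r$; in particular, when $N_n$ is large the $d_j$'s concentrate near $0$, since at most $\lfloor r/\ep \rfloor$ of them can exceed $\ep$. A structural observation I would extract from \eqref{master2}: if $\alpha_1 > \cdots > \alpha_N$ coexist then so does every prefix $\alpha_1 > \cdots > \alpha_k$ (reducing $k$ strictly decreases the LHS and increases the RHS of \eqref{master2}), so after inserting a mutant, $N_{n+1}$ is found by testing the coexistence condition at $k = N+1, N, N-1, \ldots$ until it first holds.

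The dynamical half would go as follows. A mutant $\alpha_{new} = \alpha_j + \ep U$ with $\alpha_j$ chosen uniformly among current predators, when $N_n$ is large, lands near the crowded region $[\alpha_N, \alpha_N + \ep]$ with overwhelming probability. If $\alpha_{new} < \alpha_N$, the prefix $\alpha_1,\ldots,\alpha_N$ is automatically preserved by the monotonicity above, so $N_{n+1} \in \{N, N+1\}$ and this case contributes no negative drift. The drift must therefore come from $\alpha_{new} > \alpha_N$: the additional positive term $(\alpha_{new}/\alpha_N)(\alpha_{new} - \alpha_N)$ on the left of \eqref{master2} pushes the already near-saturated condition past $r - \beta/\alpha_N$, triggering a cascade of decrements in $k$ that strips off multiple predators before coexistence is restored.

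The main obstacle is quantitative control of this cascade. Each decrement of $k$ decreases the left-hand side of \eqref{master2} only by an amount of order $(k-1)(\alpha_{k-1}-\alpha_k)$, which by the concentration of the $d_j$'s is too small to restore coexistence in one or two steps. Making this precise requires a careful case analysis of where $\alpha_{new}$ lands relative to the existing cloud, together with the budget $\sum d_j < r$ and the monotonicity observation, to show that at least two predators are shed with probability bounded away from $0$ in $N_n$. With this in hand, a standard Foster-Lyapunov argument delivers tightness of $(N_n)$.
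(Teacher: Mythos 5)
You set up a Foster--Lyapunov drift argument and correctly identify the relevant structural features of the chain: the budget $\sum_j d_j < r$ forced by \eqref{master2}, the monotonicity of the coexistence criterion in the block size $k$ (so the surviving set after each step is always a prefix of the sorted list), and the bound $N_{n+1}-N_n\le 1$. What is missing is precisely what you acknowledge as the main obstacle: verifying $\mathbb{E}[N_{n+1}-N_n\mid \mathcal{F}_n]\le -c$ for $N_n$ large, which requires showing that the \emph{expected} number of predators shed per step strictly exceeds one when $N_n$ is large. Your heuristic presumes the condition \eqref{master2} is ``near-saturated'' whenever $N_n$ is large, but the chain need not be near saturation (immediately after a large shed, $\sum d_j$ can be far below $r$), so the cascade you describe is not guaranteed to trigger. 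And, as you note, shedding the bottom predator only reduces $\sum d_j$ by roughly $(N-1)d_{N-1}$, which can be arbitrarily small when the cloud is densely packed, so even quantifying the cascade when it does trigger is delicate. So this is a genuine gap, not merely a detail to fill in.

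The paper takes a different route that avoids drift estimates entirely: a one-shot uniform minorization. With $M=\lceil 4r/\ep\rceil$, the budget shows at most $M$ of the $\alpha_j$ can lie above $\alpha_{\min}(n)+\ep/4$. With probability at least $4^{-M}$, the next $M$ mutants all land to the right of $\alpha_{\min}(n)+\ep/2$. If any predator at or below $\alpha_{\min}(n)+\ep/4$ survived through time $n+M$, then by the prefix structure you correctly identified all $M$ new mutants would also still be present, forcing $\sum_j d_j(n+M)>M\ep/4>r$, a contradiction of \eqref{master2}. Hence $P(\Delta_{n+M}\in A^{2M}\mid Y_n=y)\ge 4^{-M}$ uniformly in $y$, and tightness follows from this geometric-trials bound without ever touching $\mathbb{E}[N_{n+1}-N_n]$. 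Your structural observations (especially the prefix monotonicity) are correct and in fact underlie the paper's contradiction step, but the drift argument as proposed is incomplete and it is not clear it can be completed along the lines you sketch.
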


\begin{proof}
Define the sets $A^m = [0,r]^m \times \{0\}^\mathbb{N}$, for $m \in \mathbb{N}$. Then $\Delta_n \in A^m$ if and only if $N_n \leq m$. Let $M = M(r,\ep) = \lceil \frac{4r}{\ep}\rceil$ be the smallest integer $> 4r/\ep$ and suppose that $Y_n = y \in \mathbb{R}^{+} \times \mathcal{S}$. From \eqref{master2}, at most $M$ of the $\alpha_j(n)$'s can be $\ge \alpha_{\min}(n) + \ep/4$. With probability at least $1/4^M$, the next $M$ mutants will be inserted to the right of $\alpha_{\min}(n) + \ep/2$. But then none of the predators to the left $\alpha_{\min}(n) + \ep/4$ can be in the coexisting set at time $n+M$ because otherwise, by the definition of the APEP, any predator with $\alpha > \alpha_{\min}(n) + \ep/2$ would also be in the set, and since there are at least $M$ such predators,
$$
\sum_{j=1}^{\infty} d_j(n+M) > M (\ep/2 - \ep/4) > r
$$
contradicting \eqref{master2}. Therefore, we have the uniform lower bound
\begin{equation} \label{rec}
P(\Delta_{n+M} \in  A^{2M} | Y_n = y) \geq 4^{-M}
\end{equation}
which holds for all $y \in \mathbb{R}^{+} \times \mathcal{S}$. Since this bound is uniform in $y$,
tightness follows.
\end{proof}

\begin{lemma} \label{convtoMC}
As $n \rightarrow \infty$, the marginal transition probabilities for $\Delta_n$:
$$
p_\alpha(\Delta,\cdot) := P(\Delta_{n+1} \in \cdot| Y_n = (\alpha,\Delta))
$$
converge in total variation to the transition probabilities for a time homogeneous Markov Chain $X_n$ with transition probabilities $p(\Delta,\cdot)$.
\end{lemma}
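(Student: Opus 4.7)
The plan is to define a candidate limit kernel $p$ by replacing the coexistence condition \eqref{fixdelcond} with its $\alpha\to\infty$ limit, and then to control the total variation distance $\|p_\alpha(\Delta,\cdot) - p(\Delta,\cdot)\|_{TV}$ by bounding the Lebesgue measure of the set of perturbations $U$ for which the two coexistence procedures yield different outcomes. Since the state at time $n$ is $(\alpha_{\min}(n),\Delta_n)$, the conclusion ``as $n\to\infty$'' will follow from the more precise claim that, for each fixed $\Delta$, $\|p_\alpha(\Delta,\cdot) - p(\Delta,\cdot)\|_{TV} = O(1/\alpha)$ as $\alpha\to\infty$.

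\mn
\textbf{Step 1 (Representation).} Given $(\alpha,\Delta)$ with $N = N(\Delta)$ nonzero coordinates, the next state is $\Delta_{n+1}=\Phi_\alpha(\Delta,J,U)$ for a deterministic map $\Phi_\alpha$, where $J\in\{1,\ldots,N\}$ and $U\in[-1,1]$ are independent and uniform. Here $\Phi_\alpha$ (i) forms the multiset $\{\alpha+d_1,\ldots,\alpha+d_N,\alpha+d_J+\ep U\}$, (ii) sorts it in decreasing order, (iii) iteratively drops the smallest entry until the remaining predators satisfy \eqref{fixdelcond}, and (iv) re-centers by subtracting the minimum surviving entry. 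Let $\Phi_\infty$ be defined the same way except that step (iii) uses the limiting inequality $\sum_{i=1}^{k}(d_i - d_k) < r$, and let $p(\Delta,\cdot)$ be the law of $\Phi_\infty(\Delta,J,U)$.

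\mn
\textbf{Step 2 (Threshold perturbation and TV bound).} Multiplying \eqref{fixdelcond} through by $\alpha_k$ and substituting $\alpha_i = \alpha + d_i$ shows that, for each candidate cutoff $k$, the condition becomes
$$
\alpha\sum_{i=1}^{k}(d_i - d_k) + \sum_{i=1}^{k} d_i(d_i - d_k) < r\alpha + r d_k - \beta,
$$
which differs from its limit by terms of order $1/\alpha$, uniformly on the bounded set $d_i\in[0,r]$ with $N$ bounded (Lemma \ref{tight}). The sort in step (ii) depends only on $U$ and $\{d_i\}$, not on $\alpha$, so the $\alpha$-dependence of $\Phi_\alpha$ enters only through the coexistence thresholds in step (iii). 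Viewed as a function of $U\in[-1,1]$, each such threshold moves by $O(1/\alpha)$ relative to its $\Phi_\infty$ counterpart. Consequently, the disagreement set
$B_\alpha(\Delta,j) = \{U\in[-1,1]: \Phi_\alpha(\Delta,j,U) \neq \Phi_\infty(\Delta,j,U)\}$
is contained in a finite union of intervals of total Lebesgue measure $O(1/\alpha)$, and for any Borel $A\subset\mathcal{S}$,
$$
|p_\alpha(\Delta,A) - p(\Delta,A)| \leq \frac{1}{2N}\sum_{j=1}^N |B_\alpha(\Delta,j)| = O(1/\alpha),
$$
which yields the claim.

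\mn
The main obstacle is the non-degeneracy underlying Step 2: one must verify that each threshold inequality has nonzero derivative in $U$ at the critical crossing, so that the $O(1/\alpha)$ perturbation of the inequality produces an $O(1/\alpha)$ shift of the critical $U$-value rather than a larger one. This is a short direct calculation using the explicit polynomial form of \eqref{fixdelcond}; the degenerate configurations where the slope vanishes form a Lebesgue-null subset of $(\Delta,j)$, which is harmless for the pointwise TV statement. By construction $p(\Delta,\cdot)$ is a well-defined Markov transition kernel on $\mathcal{S}$, providing the time-homogeneous limit chain $X_n$ referenced in the statement.
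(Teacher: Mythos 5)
Your proposal is correct and follows the same idea as the paper's proof: observe that multiplying \eqref{fixdelcond} by $\alpha_k$ and letting $\alpha\to\infty$ reduces the coexistence condition to $\sum_{j\le k}(d_j-d_k)<r$, and take the limit kernel to be the one generated by this limiting algorithm. The paper's proof is essentially a sketch that stops after identifying the limiting algorithm; your write-up supplies the quantitative step that makes the total-variation claim precise (the $O(1/\alpha)$ perturbation of the thresholds, the representation via $\Phi_\alpha$ and $\Phi_\infty$, and the disagreement-set bound), which is the same argument carried to completion.
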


\begin{proof}
Suppose that $Y_n = (\alpha,\Delta)$. Since $0 \le \alpha_j(n)-\alpha_{\min}(n)\le r$ for all $n \geq 1$, we can see that as $\alpha \to \infty$, (\ref{master2}) simplifies to
\beq
\sum_{j=1}^{N} d_j(n) < r.
\label{master2b}
\eeq
This implies that, in the limit, the differences evolve according to the following algorithm:
pick a species $1\le k\le N_n$ at random, insert a random mutation in $(d_k(n)-\ep,d_k(n)+\ep)$,
and then modify the algorithm in Theorem \ref{odemain} to use (\ref{master2b}) instead of
(\ref{master2}) with the rule that we shift the differences before calculating the sum if the new insertion is left of 0.
\end{proof}

Our next result concerns the limiting behavior of $X_n$. Writing $x$ instead of $\Delta$ for the vector of differences, we set
$$
p(x,A) = P(X_{n+1} \in A| X_n = x).
$$

\begin{lemma} \label{Harris}
$X_n$ is a positive recurrent, Harris Chain and hence, has a unique stationary distribution $\pi$.
\end{lemma}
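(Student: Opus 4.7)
The plan is to verify the standard definition of a positive recurrent Harris chain (see Section 5.6 of Durrett \cite{RD2}): exhibit a set $A \subset \mathcal{S}$, a probability measure $\rho$ on $\mathcal{S}$, and constants $\ep_0 > 0$, $n_0 \in \mathbb{N}$ such that (a) $P_x(\tau_A < \infty) = 1$ for every $x \in \mathcal{S}$ and $\sup_{x \in A} E_x[\tau_A] < \infty$, and (b) the $n_0$-step kernel satisfies $p^{n_0}(x, \cdot) \geq \ep_0 \rho(\cdot)$ for every $x \in A$. These conditions together yield Harris recurrence, positive recurrence, and hence the existence of a unique stationary distribution $\pi$.

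For (a), set $M := \lceil 4r/\ep \rceil$, $K := 2M$, and take $A := \{\Delta \in \mathcal{S} : N(\Delta) \leq K\}$, where $N(\Delta) = 1 + \#\{j : d_j > 0\}$ is the number of coexisting predators encoded by $\Delta$. The argument in the proof of Lemma \ref{tight} uses only the coexistence constraint \eqref{master2b}, which is precisely the transition rule for the limiting chain $X_n$, so the same computation shows that from any $x \in \mathcal{S}$ the chain enters $A$ within $M$ steps with probability at least $4^{-M}$. Iterating this uniform lower bound, $\tau_A$ is stochastically dominated by $M$ times a geometric random variable of parameter $4^{-M}$, which gives $P_x(\tau_A < \infty) = 1$ uniformly in $x$ and $\sup_{x} E_x[\tau_A] \leq M \cdot 4^M < \infty$.

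Condition (b) is the main technical obstacle, and I would verify it by tracking the explicit dependence of the transition kernel on the uniform random variables driving the mutation process. Starting from any $x \in A$ with $N_x \leq K$, the strategy is to construct an explicit scenario involving $n_0 \leq 2K$ successive mutations in which the choice of which predator to mutate and the range of $U$ at each step are specified so that, first, the chain is driven into the low-complexity stratum $\mathcal{S}_2 := \{(d, 0, 0, \dotso) : d \in [0, r)\}$ by iteratively inserting mutants to the right of the most fit predator so as to inflate $\sum_j d_j$ beyond $r$ and force \eqref{master2b} to knock out bottom predators one at a time, and, second, one or two additional smoothing mutations produce, conditionally on everything preceding, a density with respect to Lebesgue measure on some fixed interval $[a, b] \subset (0, r)$ that is bounded below by a universal positive constant depending only on $\ep$, $r$, and $K$. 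Taking $\rho$ to be uniform on the one-dimensional set $\{(d, 0, 0, \dotso) : d \in [a, b]\}$ then yields the required minorization $p^{n_0}(x, \cdot) \geq \ep_0 \rho(\cdot)$. The delicate point is to choose the subintervals for $U$ at each step conservatively enough that both the combinatorial outcome (which and how many old predators get removed) and the lower bound on the resulting density hold uniformly over all $x \in A$, rather than depending on the particular configuration of differences in $x$.
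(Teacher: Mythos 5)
Your framework matches the paper's: reduce to the Athreya--Ney criteria (existence of a regenerative set $A$ with $P^x(\tau_A<\infty)=1$, bounded $E^x\tau_A$ on $A$, and a minorization $p^{n_0}(x,\cdot)\ge \ep_0\rho(\cdot)$ on $A$), and your verification of the hitting-time condition by re-running the argument of Lemma~\ref{tight} is essentially identical to the paper's.

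The gap is in the minorization. You propose to drive the chain into the two-type stratum $\mathcal{S}_2=\{(d,0,0,\dotsc):d\in[0,r)\}$ by inserting mutants to the right and knocking out bottom predators ``one at a time.'' But this cannot reduce the number of coexisting types to $2$ from a typical state in $A$: each mutation step adds exactly one new predator, and in a configuration all of whose consecutive spacings are at most $\ep$ (which is the only kind of configuration the chain can reach), the sum $\sum_j d_j$ before the insertion is at most $N(N-1)\ep/2$, so the coexistence constraint \eqref{master2b} cannot be violated at all, let alone violated twice in cascade, unless $N$ is already of order $\sqrt{r/\ep}$. Concretely, knocking out the two leftmost particles after inserting at $\alpha_1+u$ requires a spacing $\alpha_{N-1}-\alpha_N$ so small that $u-(N-1)(\alpha_{N-1}-\alpha_N)\ge r-\sum_{j<N}d_j>0$, which quickly becomes impossible as $N$ decreases; once $N\lesssim\sqrt{2r/\ep}$ no predator can be removed at all. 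So $\mathcal{S}_2$ is not accessible from $A$, and the proposed $\rho$ supported there cannot serve as the minorizing measure. The paper sidesteps this by choosing a ``good'' set $G$ with $\kappa\approx\sqrt{4r/\ep}$ types and all spacings in $(\ep/2,\ep/2+\eta)$ --- i.e., at the natural equilibrium population level rather than below it --- reaching $G$ in $\kappa$ steps by building the configuration from scratch with controlled right-insertions, and taking $\rho$ to be normalized Lebesgue measure on $G$. Your plan would need to be replaced by something of this kind, with the target set having $\Theta(\sqrt{r/\ep})$ types, not $2$.
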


\begin{proof}
Following the arguments in Athreya and Ney \cite{AN}, it suffices to show that there exists a ``regenerative'' set $A \subset \mathcal{S}$ satisfying:
\begin{description}
\item[(C1)] $P^x(\tau_A < \infty) = 1$ for all $x \in \mathcal{S}$ where $\tau_A$ is hitting time of $A$.
\item[(C2)] There exists a probability measure $\rho$ on $A$, $\lambda > 0$, and $\kappa \in \mathbb{N}$ so that $p^{\kappa}(x,B) \geq \lambda \rho(B)$ for all $x \in A$, $B \subset A$.
\end{description}

The same calculation that led to \eqref{rec} shows that $A^{2M}$ satisfies the condition in (C1), but (C2) may not hold for this set. We therefore define a set $G$ (for good) that will be reached from $A^{2M}$ with probability 1 and satisfies (C2). To this end, let
$$
\kappa = 1 + \sup\left\{k : \sum_{j=1}^k j = \frac{k(k+1)}{2} < 2r/\ep\right\}
$$
and choose $\eta$ small enough so that
\begin{equation} \label{etabnd}
\sum_{j=1}^{k} j(\ep/2 + \eta) < r.
\end{equation}
Let $G = \{ d_i-d_{i+1} \in (\ep/2,\ep/2+\eta)$ for $i< \kappa$ and $d_i=0$ for $i \ge \kappa\}$. In other words, $d \in G$ corresponds to $\kappa$ types coexisting with $\alpha$'s that have spacings between $\ep/2$ and $\ep/2 + \eta$ units apart.

The first step in showing that (C1) and (C2) hold for $A=G$ is to show that if $X_0 = x \in A^{2M}$, then we can get to $A$ in $\kappa$ steps by the following path: first, we choose $d_1$ (the predator with the largest values of $\alpha$) as our mutating predator at time $1$ (which happens with probability at least $(2M)^{-1}$) and then choose a mutant type $g_1$ in $(d_1+ \ep/2,d_1 + \ep/2 + \eta)$ (which happens with probability $\eta/(2\ep)$). The next time step, we choose $g_1$ as our mutating type (which happens with probability at least $(2M+1)^{-1}$) and then mutate to $g_2 \in (g_1+\ep/2,g_1+\ep/2 + \eta)$. If we continue for $\kappa$ steps, then each $g_j$, $1\le j\le \kappa$ will be at least as big as $d_1 + j\ep /2$ so that by \eqref{master2b}, no member of the coexisting set at time $0$ will remain at time $\kappa$. Furthermore, by \eqref{etabnd}, the shifted set $d_j' = g_{\kappa-j+1} - g_{1}$, $1\le j\le \kappa$  will satisfy \eqref{master2b} and therefore, $X_{\kappa} \in G$. It is clear from the construction that we have
\begin{equation} \label{rec2}
p^{\kappa}(x,G) \geq \left(\frac{\eta}{2\ep (2M+\kappa)}\right)^{\kappa}
\end{equation}

To prove (C2) holds, we first consider cylinder sets of the form $B = \{ d_i - d_{i+1} \in B_i \subset (\ep/2,\ep/2+\eta)$ for $i< \kappa$ and $d_i=0$ for $i \ge \kappa\}$. Then if $x \in G$, taking the same path that led to \eqref{rec2} yields the lower bound
\begin{equation} \label{lowerbnd}
p^{\kappa}(x,B) \geq \frac{|B_1|\cdots|B_{\kappa-1}|}{(2\ep)^{\kappa-1}}\left(\frac{1}{2M + \kappa}\right)^\kappa.
\end{equation}
If we let $\rho = $ Lebesgue measure on $G$ normalized to be a probability and recall that the Radon-Nikodym derivative $dp^\kappa(x,\cdot)/d\rho(\cdot)$ evaluated at a general measurable set $B$ can be written as the limit of $p^\kappa(x,B_k))/\rho(B_k)$ where $B_k$ is a sequence of cylinder sets, {\bf (C2)} follows.

To check positive recurrence, we let $\tau_A$ be the first hitting time of our regenerative set $G$. \eqref{rec} and \eqref{rec2} tell us that there is a positive constant $\theta = \theta(r,\ep)$ so that
$$
p^{2M+\kappa}(x,G) \geq \theta > 0
$$
for any $x \in \mathcal{S}$. Therefore, we have $E^x(\tau_A) \leq (2M+\kappa)/\theta < \infty$, completing the proof.
\end{proof}

The construction in the previous lemma also yields:

\begin{lemma} \label{alphaminlim}
$\alpha_{\min}(n) \rightarrow \infty$ a.s. as $n \rightarrow \infty$.
\end{lemma}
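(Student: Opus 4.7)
The plan is to reduce the claim to showing that the largest consumption rate $\alpha_{\max}(n) := \alpha_1(n)$ among the coexisting predators tends to $\infty$, and then establish this via the boosting construction from the proof of Lemma \ref{Harris} together with the conditional second Borel--Cantelli lemma.

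The reduction relies on two facts. First, $\alpha_{\max}(n)$ is monotonically non-decreasing in $n$: the $k=1$ case of \eqref{master} with $\delta=1$ says the predator with the largest $\alpha$ is always in the coexisting set as soon as $\alpha_1 > \beta/r$; the initial condition $\alpha(0) r/\beta > 1$ forces $\alpha_1(0) > \beta/r$, and since any mutation can only raise the largest $\alpha$-value, the inequality propagates inductively, showing the top predator is never eliminated. Second, \eqref{master2} forces $\alpha_{\max}(n) - \alpha_{\min}(n) < r$, because the $j=1$ term $(\alpha_1/\alpha_N)(\alpha_1 - \alpha_N)$ already dominates $\alpha_1 - \alpha_N$ and is bounded by the full LHS of \eqref{master2}, which is $< r - \beta/\alpha_N < r$. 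Hence $\alpha_{\min}(n) \ge \alpha_{\max}(n) - r$, and it suffices to prove $\alpha_{\max}(n) \to \infty$ a.s.

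For that, I would iterate the uniform bound \eqref{rec} via the conditional Borel--Cantelli lemma to produce a.s.\ finite stopping times $T_0 < T_1 < T_2 < \cdots$ with $T_{k+1} \ge T_k + M + \kappa$ and $N_{T_k} \le 2M$. At each $T_k$, the specific $\kappa$-step mutation path from the proof of Lemma \ref{Harris}---which inserts $g_1 < g_2 < \cdots < g_\kappa$ with $g_1 \ge \alpha_{\max}(T_k) + \ep/2$ and consecutive spacings in $(\ep/2,\ep/2+\eta)$---has conditional probability at least $\theta := (\eta/[2\ep(2M+\kappa)])^\kappa > 0$, uniformly in the state at time $T_k$. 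A second application of the conditional Borel--Cantelli lemma to the disjoint windows $(T_k, T_k + \kappa]$ gives that almost surely infinitely many of these attempts succeed. On each successful attempt, the argument in Lemma \ref{Harris} rules out every old coexisting predator at time $T_k + \kappa$, so the coexisting set is exactly $\{g_1, \ldots, g_\kappa\}$ and $\alpha_{\max}(T_k + \kappa) \ge g_\kappa > \alpha_{\max}(T_k) + \ep/2$. Combined with monotonicity, this forces $\alpha_{\max}(n) \to \infty$ a.s., and then $\alpha_{\min}(n) \to \infty$ a.s.\ by the bound $\alpha_{\min} \ge \alpha_{\max} - r$.

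The main point that requires care is that Lemma \ref{Harris} justifies the removal of old predators using the limiting condition \eqref{master2b}, whereas the actual process satisfies \eqref{master2}. This is not a genuine obstacle: the LHS of \eqref{master2} dominates that of \eqref{master2b} term by term since $\alpha_j/\alpha_N \ge 1$, while the RHS of \eqref{master2} is $r - \beta/\alpha_N < r$, so any configuration ruled out by \eqref{master2b} is also ruled out by \eqref{master2}. Thus no circular appeal to the conclusion $\alpha_{\min}(n) \to \infty$ is needed to run the boosting step, and the argument is self-contained at finite $\alpha$.
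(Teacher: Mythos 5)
Your proof is correct and follows essentially the same route as the paper: both modify the regeneration construction from Lemma \ref{Harris} to get a uniform positive probability of boosting $\alpha_{\max}$ by a fixed amount over a block of steps, apply the conditional Borel--Cantelli lemma, and then pass from $\alpha_{\max}$ to $\alpha_{\min}$ via the bound $\alpha_{\max}(n)-\alpha_{\min}(n)<r$. You are more explicit than the paper about two points the paper uses implicitly---the monotonicity of $\alpha_{\max}(n)$ (needed for the Borel--Cantelli argument to actually force divergence) and the fact that the limiting condition \eqref{master2b} is weaker than the finite-$\alpha$ condition \eqref{master2}---but these are clarifications, not a different argument.
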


\begin{proof}
We can modify the construction in the previous Lemma to show that there exist constants $K, J \geq 1$, $\rho > 0$ so that
\begin{equation*}
P(\alpha_1((n+1)K) - \alpha_1(nK) \geq J \ep/2 | Y_{nK} = y) \geq \rho
\end{equation*}
for any $y \in \R^{+} \times \mathcal{S}$ and $n \geq 0$. Therefore, $\alpha_1(n) \rightarrow \infty$ a.s. by the Borel-Cantelli Lemma and the result follows since $\alpha_1(n) - \alpha_{\min}(n) < r$.
\end{proof}

\begin{theorem} \label{convtoeq}
As $n \rightarrow \infty$, $\|P^\alpha(\Delta_n \in \cdot) - \pi(\cdot)\|_{TV} \rightarrow 0$ for any initial $\alpha \in \R^{+}$.
\end{theorem}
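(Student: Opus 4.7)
The plan is to combine uniform ergodicity of the limit homogeneous chain $X_n$ with the almost-sure divergence $\alpha_{\min}(n)\to\infty$ (Lemma \ref{alphaminlim}) via a triangle inequality over an intermediate window of length $k$: the non-homogeneous $k$-step kernel should be close to the homogeneous one because $\alpha_{\min}$ is large, and the homogeneous $k$-step kernel should be close to $\pi$ for large $k$ by ergodicity.

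First I would upgrade Lemma \ref{Harris} to uniform geometric ergodicity. The minorization $p^\kappa(x,\cdot)\ge\lambda\rho(\cdot)$ on $x\in G$ combined with the uniform hitting-time bound $\sup_x E^x\tau_G\le(2M+\kappa)/\theta$ established in the proof of Lemma \ref{Harris} yields, via a standard regeneration argument (each visit to $G$ affords an independent chance $\lambda$ to resample from $\rho$ in $\kappa$ steps, and the number of failures before success is geometric), constants $C<\infty$ and $\hat\rho\in(0,1)$ with $\sup_x\|p^k(x,\cdot)-\pi\|_{TV}\le C\hat\rho^k$. Second, I would refine Lemma \ref{convtoMC} to a uniform-in-$\Delta$ statement: inspection of its proof shows that replacing \eqref{master2} by \eqref{master2b} changes the coexistence outcome only when the inserted mutant falls in a set of Lebesgue measure $O(1/\alpha)$, because the discrepancy between the two conditions is controlled by $\beta/\alpha$ and by $(\alpha_j-\alpha_N)^2/\alpha_N\le r^2/\alpha$, quantities depending on $\Delta\in\mathcal{S}$ only through the universal constant $r$. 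This gives
\[
\delta_M := \sup_{\alpha\ge M,\ \Delta\in\mathcal{S}} \|p_\alpha(\Delta,\cdot)-p(\Delta,\cdot)\|_{TV}\longrightarrow 0 \qquad\text{as } M\to\infty.
\]

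Finally, I would couple $\Delta_{n+\cdot}$ and a homogeneous copy $X_\cdot$ of the limit chain, both started at $\Delta_n$, step by step via the maximal coupling of $p_{\alpha_{\min}(n+j)}(\cdot,\cdot)$ and $p(\cdot,\cdot)$. On the event $A_{n,k,M}=\{\alpha_{\min}(n+j)\ge M,\ 0\le j\le k\}$ a union bound yields $P(\Delta_{n+k}\ne X_k\mid\mathcal F_n)\le k\delta_M$, and averaging over $\Delta_n$ and applying the triangle inequality gives
\[
\|P^{\alpha}(\Delta_{n+k}\in\cdot)-\pi\|_{TV}\le k\delta_M + P(A_{n,k,M}^c) + C\hat\rho^k.
\]
Given $\eta>0$, choose $k$ with $C\hat\rho^k<\eta/3$, then $M$ with $k\delta_M<\eta/3$, then by Lemma \ref{alphaminlim} take $n$ large enough that $P(A_{n,k,M}^c)<\eta/3$. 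The main obstacle is the second step: the excerpt states Lemma \ref{convtoMC} only pointwise in $\Delta$, so one must retrace its proof to extract the uniform rate; this uniformity is essentially free once the $\Delta$-independent $O(1/\alpha)$ slack between \eqref{master2} and \eqref{master2b} is tracked carefully, but it is the one technical point that is not immediate from what has been recorded.
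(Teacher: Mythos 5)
Your argument is correct and uses the same core ingredients as the paper (the minorization and uniform return-time bound from Lemma \ref{Harris}, Lemma \ref{alphaminlim}, Lemma \ref{convtoMC}, and step-by-step maximal coupling), but the decomposition is genuinely different. The paper runs a single coupling from time $0$: it pairs a stationary copy $\tilde X_n$ of the limit chain with the time-changed differences $\Delta_{n\kappa}$ via maximal coupling, and when the two components decouple it uses the minorization $q(x,\zeta)\ge\lambda$ to re-couple with probability $\ge\lambda/2$, yielding the recursion $\zeta_{n+1}\le(1-\lambda/2)\zeta_n+\eta_{n+1}$. Because the per-step failure probability there is an average, $\eta_{n+1}=\bigl\|\int q_{\alpha_{\min}(n\kappa)}(x,\cdot)\,\pi(dx)-\int q(x,\cdot)\,\pi(dx)\bigr\|_{TV}$, the paper only needs the pointwise-in-$\Delta$ conclusion of Lemma \ref{convtoMC}: bounded convergence against $\pi$ plus the a.s.\ divergence of $\alpha_{\min}$ finishes it. Your three-term bound over a window of length $k$ instead compares $\Delta_{n+\cdot}$ with a fresh homogeneous chain started at $\Delta_n$ and invokes uniform ergodicity of the limit chain to close the loop; this avoids the re-coupling recursion but, as you flag, appears to require the uniform-in-$\Delta$ rate $\delta_M\to 0$. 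Two remarks. First, that extra uniformity can be dispensed with inside your own scheme: rather than taking $\sup_\Delta$, condition on $\mathcal F_{n+j}$ and bound the step-$j$ failure probability by $E\bigl[\,\|p_{\alpha_{\min}(n+j)}(\Delta_{n+j},\cdot)-p(\Delta_{n+j},\cdot)\|_{TV}\,\bigr]$, which tends to $0$ by bounded convergence and Lemma \ref{alphaminlim}, exactly as in the paper; then no uniform modulus is needed. Second, your upgrade of Lemma \ref{Harris} to uniform geometric ergodicity is sound: the bound $p^{2M+\kappa}(x,G)\ge\theta$ for all $x$ established there, composed with the $\lambda$-minorization of $p^\kappa$ on $G$, gives a uniform Doeblin condition $p^{2M+2\kappa}(x,\cdot)\ge\theta\lambda\,\rho(\cdot)$ and hence the desired $\sup_x\|p^k(x,\cdot)-\pi\|_{TV}\le C\hat\rho^k$. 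So the proposal is correct; it trades the paper's re-coupling recursion for a cleaner triangle inequality, at the cost of (an avoidable) stronger uniformity in the kernel-convergence step.
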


\begin{proof}
It suffices to prove the result for the subsequences $n=m\kappa+j$ for $0 \le j < \kappa$, but then by using the Markov property at time $j$, it is enough to prove the result for $n=m\kappa$ and a general initial distribution.
To prepare for the proof, recall that one can modify the state space of a Harris recurrent Markov chain to have a point $\zeta$ that corresponds to being distributed on the set $A$ according to $\rho$ with the exact position being independent of the past.

To prove the result, we will construct a process $(\tilde{X}_n,\tilde{\Delta}_n) $ on $\mathcal{S} \times \mathcal{S}$ so that the marginal law of $\tilde{\Delta}_n$ is the law of $\Delta_{n\kappa}$, the marginal distribution of $\tilde{X}_n$ is $\pi$ for all $n$, and $\mathbb{P}(\tilde{X}_n \neq \tilde{\Delta}_n) \rightarrow 0$ as $n \rightarrow \infty$. Let $U_1,U_2, \ldots$ and $V_1,V_2,\ldots$ be independent and uniform on $[0,1]$. To begin, let $q_\alpha(x,\cdot) \equiv P(\Delta_{\kappa} \in \cdot |\Delta_0 = (\alpha,x))$ and
$$q(x,\cdot) \equiv \lim_{\alpha \to \infty} q_\alpha(x,\cdot) = p^\kappa(x,\cdot)$$
by Lemma \ref{convtoMC}. Define the function $J_n:\mathcal{S} \times [0,1] \rightarrow \mathcal{S}$ by
$$
P(J_n(x,U_n)\in B) = q_{\alpha_{\min}(n\kappa)}(x,B).
$$
Since $q_{\alpha_{\min}(n\kappa)}(x,\cdot) \in \mathcal{M}_1(\mathcal{S})$ and $\mathcal{S}$ is a separable metric space, defining $J_n$ is possible by Theorem 3.2 in Billingsley \cite{PB}. Suppose that $\tilde{X}_n$ has distribution $\pi$, define $Z_{n+1} = J_n(\tilde{X}_n,U_n)$ and
$$
\mu_n(A) \equiv P(Z_n \in A|\alpha_{\min}(n\kappa)) = \int q_{\alpha_{\min}(n\kappa)}(x,A) \pi(dx),
$$
and let $(\tilde{X}_{n+1},Z_{n+1})$ be a maximal coupling of $(X_n,Z_n)$ so that
$$
\mathbb{P}(\tilde{X}_{n+1} \neq Z_{n+1}) = \|\mu_n - \pi\|_{TV}
$$
(see, for example Thorisson \cite{HT}). Then from the definition of $\mu_n$ and $(\tilde{X}_{n+1},Z_{n+1})$ we have
\begin{align*}
\eta_{n+1}  & \equiv \mathbb{P}(\tilde{X}_{n+1} \neq Z_{n+1}) \\
& = \left\| \int q_{\alpha_{\min}(n\kappa)}(x,\cdot) \pi(dx) - \int q(x,\cdot) \pi(dx)\right\|_{TV} \rightarrow 0
\end{align*}
as $n \rightarrow \infty$ by Lemma \ref{convtoMC} and \ref{alphaminlim}.

When $\{\tilde{\Delta}_n=\tilde{X}_n\}$, we set $\tilde{\Delta}_{n+1} = J_n(\tilde{X}_n,U_n) = Z_{n+1}$ so that
$$
P(\tilde{X}_{n+1} \neq \tilde{\Delta}_{n+1}, \tilde{X}_n = \tilde{\Delta}_n) \leq \eta_{n+1}
$$
On $\{\tilde{X}_n \neq \tilde{\Delta}_n\}$, we take $\tilde{\Delta}_{n+1} = J_n(\tilde{X}_n,V_n)$. \eqref{lowerbnd} implies that $q(x,\zeta) \ge \lambda$, so it follows from Lemma \ref{convtoMC} that if  $\alpha_{\min}(n\kappa)\ge \alpha_0$ then
$q(\alpha_{\min}(n\kappa),x,\zeta) \ge \lambda/2$, and we have
$$
P(\tilde{X}_{n+1}=\tilde{\Delta}_{n+1}|\tilde{X}_n \neq \tilde{\Delta}_n) > \lambda/2
$$
so that if $\zeta_n = P(\tilde{X}_n \neq \tilde{\Delta}_n)$, then
$$
\zeta_{n+1} \leq (1-\lambda/2)\zeta_n + \eta_{n+1}.
$$
Iterating, yields the inequality
\beq \label{easysum}
\zeta_{n+1} \leq \sum_{i=1}^{n+1} (1- \lambda/2)^{n+1-i}\eta_i.
\eeq
Since $|1-\lambda/2|<1$ and $\eta_n \to 0$, the right hand side of \eqref{easysum} must also go to zero which yields
$$\|P(\Delta_{n\kappa} \in \cdot) - \pi(\cdot) \|_{TV} \leq P(\tilde{X}_n \neq \tilde{\Delta}_n) = \eta_n \to 0.$$
completing the proof. \end{proof}

It remains to prove the result on the linear growth of $\alpha_{\min}(n)$. Since $\alpha_j-\alpha_{min} \le r$, it suffices to establish this for $\alpha_{max}$. To do this, we look at the chains $Z_n = (X_n,U_n,V_n)$ with $U_n$ uniform on $[0,1]$ giving the index $k=\lceil N_nU_n \rceil$ of the value to be mutated, and $V_n$ independent uniform on $[-\ep,\ep]$ giving the change in the value due to mutation. It is clear that the distribution of $Z_n$ will converge in distribution to the product measure $\tilde{\pi} = \pi \times \hbox{uniform}[0,1] \times \hbox{uniform}[-\ep,\ep]$ so that if we let $f(Z_n) = \alpha_{\max}(n) - \alpha_{\max}(n-1)$ be the amount shifted at the $n^{th}$ step, then $f$ is non-negative and bounded above by $\ep$ so the strong law for functionals of Markov chains implies
\beq \label{limfunct}
\frac{\alpha_{\max}(n) - \alpha_{\max}(0)}{n} = \frac{1}{n} \sum_{m=1}^n f(Z_m) \to \int f(x) \tilde{\pi}(dx) =
\bar\alpha
\eeq
Since $f>0$ with positive probability, $\bar\alpha>0$. To extend this result to the real chain, let $(\tilde{X}_n,\tilde{\Delta}_n)$ be the coupled chain from the proof of Theorem \ref{convtoeq} and define $D_n=1$ if $\tilde{X}_n \neq \tilde{\Delta}_n$ and $D_n =0$ otherwise. From the proof of Theorem \ref{convtoeq}, we can dominate $D_n$ by a Markov Chain $B_n$ that has
\begin{align*}
P(B_{n+1}=1|B_n=0) &= \eta_{n+1} \\
P(B_{n+1}=0|B_n = 1) &= \frac{\lambda}{2}
\end{align*}
i.e., we can define the two processes on the same space so that $B_n \geq D_n$ for all $n$. Coupling $B_n$ with a homogeneous chain $B_n^\rho$ that has $P(B_{n+1}^\rho=1|B_n^\rho=0) = \rho$, $P(B_{n+1}^\rho=0|B_n^\rho = 1) = \lambda/2$, and stationary distribution $\pi^\rho$ with $\pi^\rho(1) = \rho/(\rho+\lambda/2)$ and recalling that $\eta_n \to 0$, it follows that
\beqx
\limsup_{n \to \infty} \frac{1}{n} \sum_{m=1}^n D_m \leq \limsup_{n \to \infty} \frac{1}{n} \sum_{m=1}^n B_m \leq \frac{\rho}{\rho + \lambda/2}.
\eeqx
Since this holds for any $\rho >0$, we must have
\beqx
\lim_{n \to \infty} \frac{1}{n} \sum_{m=1}^n D_m = 0
\eeqx
and the desired result now follows from \eqref{limfunct} and the fact that $0 \leq \alpha_{\max}(n) - \alpha_{\max}(n-1) \leq 1$ for all $n \geq 0$.

\section{Proof of Theorem \ref{alphafixedmain}} \label{alphafixedsec}

In this section, we prove Theorem \ref{deltafixedmain} for the DPEP as defined in Section \ref{deltafixedintro} and use the notation defined there. Since one of the keys to deriving our results will be comparison with a branching random walk, we continue adopting the perspective that $X_j(t)$ refers to the position of particle $j$ on positive half line. Note that if we set $\alpha_j = 1$, $k=N$ in \eqref{master}, we obtain the condition for coexistence:
\beq \label{master3}
\delta_N\left( \beta + \sum_{j=1}^N \left(1 - \frac{\delta_j}{\delta_N} \right) \right) < r.
\eeq

\n {\it Proof of Lemma \ref{alphafixedlem}}. Let $X_i(t) = - \log(\delta_i(t))$ and $X_1(t)> \cdots > X_{M}(t)$ be the rightmost $M$ particles at this time. It should be clear from (\ref{master3}) that if
\beq
e^{-X_M(T)} ( \beta + M ) < r
\label{Ntlb}
\eeq
then we will have $N_t \ge M$ for $t \ge T$. Let $y = - \log(r/(\beta+M))$. The right most particle is increasing in $t$. Since the number of particles changes by $\le 1$ each time and $\sum_{m=1}^\infty 1/m = \infty$ the right-most particle gives birth to the right of its current position plus 1/2 infinitely many times. Thus at some time $T$, we will have a point $\ge y+M$. Since $|X_i(t)-X_{i+1}(t)| \leq 1$ and points are only erased when \eqref{master3} fails, \eqref{Ntlb} follows. \eopt

\subsection{Asymptotics for $X_{max}$}

For the remainder of the paper, we let $Z_t$ be a branching random walk started from one particle at 0, in which particles give birth at rate 1 and displacements are uniform on $[-1,1]$. It is well known that the mean measure
\beq
EZ_t(A) = e^t P( S_t \in A)
\label{meanmeas}
\eeq
where $S_t$ is a continuous time random walk that jumps at rate one and takes step uniform on $[-1,1]$.
If we let $\phi(\theta) = (e^\theta - e^{-\theta})/2\theta$ be the moment generating function for the displacements, then
$$
E e^{\theta S_t} = \sum_{n=0}^\infty e^{-t} \frac{t^n}{n!} \phi^n(\theta) = \exp(t(\phi(\theta)-1))
$$
Chebyshev's inequality implies that if $\theta>0$
\beq
P( S_t > xt ) \le \exp( - t(\theta x - \phi(\theta) + 1) )
\label{ldub}
\eeq
and standard large deviations results imply that for $x \geq 0$,
\beq
\frac{1}{t} \log P( S_t > xt ) \to \Lambda(x) =  - \left( \sup_{\theta>0} \{\theta x - \phi(\theta)\} + 1 \right)
\label{ldasy}
\eeq
where $\Lambda(0) = 0$ and $\Lambda$ is strictly decreasing on $[0,\infty)$.

Biggins \cite{JB1}, Theorem 2 shows that the right-most particle in the branching random walk $Z_{\max}(t)/t \rightarrow a$ a.s. where $a$, defined in \eqref{ab}, is the smallest $x> 0$ such that $\Lambda(x)\le -1$. Since the particles $X_i(t)$ in our evolution model are a subset of those in the branching random walk, we have
$$
\limsup_{t\to\infty} X_1(t)/t \le a.
$$
The remainder of this section is dedicated to the proof of the lower bound
\beq \label{lowerbndgoal}
\liminf_{t\to\infty} X_1(t)/t \ge a.
\eeq

By Lemma \ref{alphafixedlem}, we know there exists some time $T$ so that $N_t \geq M$ for $t \ge T$. By the proof of Lemma \ref{alphafixedlem}, we can take $T$ to be the first time $e^{-X_M(T)} ( \beta + M ) < r$, which is a stopping time, so the future behavior of the process is not affected.

\begin{lemma} \label{edgetotoy}
If we start the toy model at time $T$ with positions equal to the rightmost $M$ particles at this time $X_1(T)> \cdots > X_{M}(T)$, then the $X_i(t)$ and $Y^M_i(t)$ can be defined on the same space so that $X_i(t) \ge Y^M_i(t)$ for all $1\le i \le M$ and $t \ge T$.
\end{lemma}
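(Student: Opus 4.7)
The plan is to construct an explicit monotone coupling under which $X_i(t) \ge Y_i^M(t)$ for every $1 \le i \le M$ and $t \ge T$. Two structural observations about the DPEP drive the argument. First, by the choice of $T$ we have $e^{-X_M(T)}(\beta + M) < r$, so \eqref{contscond} holds strictly for any collection of $M$ particles with positions bounded below by $X_M(T)$; since births only add particles to DPEP and the coexistence rule only ever deletes particles with the largest $\ell$-values (equivalently the smallest $X$-values), none of the top $M$ is ever removed, and each $X_i(t)$ with $i \le M$ is in fact non-decreasing in $t$. Second, if one conditions on only those births originating from current top-$M$ particles, the induced dynamics on the top $M$ itself is exactly the toy model: a birth at $X_k + U$ either lies above $X_M$ (so it enters the top $M$ and pushes out the previous rank-$M$ particle) or below $X_M$ (so the top $M$ is unchanged), and in either case the top-$M$ configuration evolves precisely by the $M$-particle branching-selection rule.

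The coupling itself I would build by Poisson thinning. Split DPEP's $N_t$ independent rate-one birth clocks into a rate-$M$ sub-process of top-$M$ births and an independent rate-$(N_t - M)$ sub-process of births from the remaining particles. Each top-$M$ event carries an independent uniform rank $k \in \{1,\ldots,M\}$ and displacement $U \in [-1,1]$, and I would use the same pair $(k,U)$ to drive the toy model's birth from $Y_k^M$. The thinning property gives the correct marginal on the DPEP side, while by construction the toy-model side has total rate-$M$ with uniform rank selection, matching its definition.

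With this coupling I would prove $X_i(t) \ge Y_i^M(t)$ by induction on birth events, starting from $X_i(T) = Y_i^M(T)$. Between events positions are frozen, so the inequality is preserved. At a shared event with rank $k$ and displacement $U$, the inductive hypothesis $X_k \ge Y_k^M$ yields $X_k + U \ge Y_k^M + U$, so the $(M+1)$-element multisets $\{X_1,\ldots,X_M,X_k+U\}$ and $\{Y_1^M,\ldots,Y_M^M,Y_k^M+U\}$ are pointwise comparable. The elementary fact that pointwise inequality survives sorting --- if $a_i \ge b_i$ for all $i$ then the $j$-th largest of the $a$'s is at least the $j$-th largest of the $b$'s, immediate from $\max_{|S|=j}\min_{i\in S}a_i \ge \max_{|S|=j}\min_{i\in S}b_i$ --- then shows that the top $M$ of the DPEP multiset dominates the updated toy configuration element-wise. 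Since DPEP's actual post-event top $M$ dominates the top $M$ of any subset of its post-event particles, and in particular of this $(M+1)$-element one, the inequality propagates. At a DPEP-only event (a birth from a particle outside the current top $M$) the toy model is untouched and DPEP's top $M$ can only move upward, so the inequality is trivially preserved there as well.

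The main obstacle is purely bookkeeping: the identity of the rank-$k$ top-$M$ particle in DPEP changes whenever a non-top birth reshuffles the ordering, so one has to verify that the rank-based coupling really does produce the toy model as its $Y^M$-marginal. This is ultimately automatic because at every instant there are exactly $M$ rate-one clocks attached to the top-$M$ particles, so the thinned process of top-$M$ events is Poisson of rate $M$ with uniform rank selection, matching the toy-model definition verbatim; only a standard measurable construction is needed to handle the jumps in the top-$M$ identity.
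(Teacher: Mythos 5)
Your proposal is correct and takes essentially the same approach as the paper: couple birth times and displacements rank by rank so that DPEP-only births (from particles below the top $M$) can only push the $X$'s upward while coupled top-$M$ births preserve the comparison. Your Poisson-thinning construction and the explicit sorting lemma ($a_i \ge b_i$ for all $i$ implies the order statistics dominate) simply spell out more carefully what the paper summarizes as coupled births making the vectors ``move in parallel.''
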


\begin{proof}
Couple the birth times of $X_i(t)$ and $Y^M_i(t)$ and the displacements of their offspring. Recall that if a birth from $X_k(t)$ with $k > M$ lands to the right of some $X_i(t)$, $i \leq M$, we renumber the $X_i$ and put them in decreasing order. Births of particles from $X_k(t)$ for $k>M$ may cause the $X$'s to get ahead of the $Y$'s, but coupled births for $i \le M$ cause the vectors of $X$'s and $Y$'s to move in parallel and the desired comparison follows.
\end{proof}

For our next comparison consider the branching random walk started with one particle at $Y^M_1(0)$. Let $T_k$ be the time of the $k$th birth, with $T_0=0$, and for $t \in [T_{k-1},T_k)$ let $\zeta^k_1(t) > \zeta^k_2(t) > \cdots > \zeta^k_k(t)$ be the locations of the particles present.

\begin{lemma} \label{toytobrw}
We can couple the branching random walk and the toy model so that for $t \in [T_{k-1},T_k)$, $Y^M_j(t) \ge \zeta^k_j(t)$ for $1\le j \le k$ and $k<M$.
\end{lemma}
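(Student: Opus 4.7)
The plan is to build a rank-based coupling of the branching random walk with the toy model and then verify the coordinatewise inequality by induction on successive event times. Decompose the rate-$M$ Poisson stream of toy births by the current rank of the parent, producing $M$ independent rate-$1$ Poisson streams $\mathcal{T}_1,\ldots,\mathcal{T}_M$; each toy birth additionally carries an independent $U\sim$ Uniform$[-1,1]$ displacement. To drive the BRW, declare that $\zeta^k_j$ gives birth precisely at those points of $\mathcal{T}_j$ that occur while the BRW has at least $j$ particles, and use the same $U$ for the offspring displacement. Toy events that fire at a rank exceeding the current BRW population are invisible to the BRW. Independence of the $\mathcal{T}_j$'s and of the Uniform$[-1,1]$ marks give the coupled BRW the correct marginal law, started from a single particle at $Y_1^M(0)$.

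With the coupling fixed, I would prove by induction on event times that $Y_j^M(t)\ge \zeta^k_j(t)$ for all $1\le j\le k$ throughout $[T_{k-1},T_k)$ and all $k<M$. The base case at $t=0$ is immediate, and between events positions do not change, so I only need to check that the inequality survives each event. In the first case, a coupled birth at rank $j\le k$ creates $Y_j^M+U$ in the toy system and $\zeta^k_j+U$ in the BRW, after which the toy deletes its minimum. Let $\widetilde W=\{Y_1^M,\ldots,Y_M^M,Y_j^M+U\}$ and $\widetilde Z=\{\zeta^k_1,\ldots,\zeta^k_k,\zeta^k_j+U\}$. The inductive hypothesis gives coordinatewise domination of the size-$(k+1)$ sub-multiset $\{Y_1^M,\ldots,Y_k^M,Y_j^M+U\}\subset\widetilde W$ over $\widetilde Z$, and this domination passes to sorted order statistics; adding points to a multiset only raises its order statistics, so those of $\widetilde W$ dominate those of its sub-multiset; and because $k+1\le M$, deleting the minimum of $\widetilde W$ leaves its top $k+1$ entries untouched. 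Chaining these gives $Y_l^M(t)\ge \zeta^{k+1}_l(t)$ for $l\le k+1$. In the second case, a toy-only birth from a rank $j>k$ leaves the BRW unchanged, and the same ``superset dominates sub-multiset, minimum deletion is harmless for the top $k\le M-1$'' argument gives $Y_l^M(t)\ge \zeta^k_l(t)$ for $l\le k$.

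The main obstacle to make rigorous is confirming that the coupling really delivers a BRW at the marginal level. A fixed BRW particle identity is fed by different rate-$1$ streams $\mathcal{T}_j$ as its rank in the BRW shifts over time, so I would verify via the strong Markov property at each event time that the concatenated birth stream attached to each BRW particle is still a rate-$1$ Poisson process with i.i.d.\ Uniform$[-1,1]$ marks, and that distinct BRW identities draw from disjoint index sets so their birth streams are independent. Once this Poisson bookkeeping is secured, the rest of the proof reduces to the two elementary facts used above (pointwise order is preserved under sorting, and order statistics grow when points are added), and the induction closes without further surprise.
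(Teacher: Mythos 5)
Your proposal reproduces the paper's coupling: drive the rank-$j$ BRW particle with the rank-$j$ toy birth stream $\mathcal{T}_j$ and share the Uniform$[-1,1]$ displacement, so that toy births at ranks $>k$ only push the $Y$'s further ahead while coupled births keep the two sorted vectors moving in parallel. The order-statistics argument you spell out is precisely what the paper's terse ``move in parallel'' is invoking, and your worry about the BRW marginal can be dispatched more quickly by noting that the construction gives the configuration process the correct generator (each of the $k$ present ranks ticks at rate one with an independent Uniform$[-1,1]$ mark), which determines its law.
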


\begin{proof}
Couple the birth times of $\zeta^k_j(t)$ and $Y^M_j(t)$ for $j\le k$ and $t \in (T_{k-1},T_k]$, i.e., there will be no births in $(T_{k-1},T_k)$ and the same particle will give birth at time $T_k$. Births of particles from $Y_j(t)$ for $j>k$ may cause the $Y$'s to get ahead of the $\zeta$'s, but coupled births for $j \le k$ cause the vectors of $\zeta$'s and $Y$'s to move in parallel.
\end{proof}

\begin{lemma} \label{toy2}
Let $B_M$ be the time of the $M$th birth in the branching random walk.
\beqx
\liminf_{t \rightarrow \infty} \frac{Y_1^M(t)}{t} \ge \frac{EZ_{max}(B_M)}{EB_M} \to a \quad \text{as $M\to\infty$}
\eeqx
\end{lemma}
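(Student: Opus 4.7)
My plan is to iterate the one-step coupling of Lemma \ref{toytobrw} in a renewal fashion for the lower bound, couple the toy model with a full branching random walk for a matching upper bound, and then combine the two via Fatou's lemma to push $E[Z_{\max}(B_M)]/E[B_M]$ up to $a$ as $M\to\infty$.

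For the renewal lower bound, set $s_0=0$ and inductively let $s_i=s_{i-1}+B_M^{(i)}$, where $B_M^{(i)}$ is the time of the $M$-th birth in a fresh branching random walk $Z^{(i)}$ started with a single particle at $Y_1^M(s_{i-1})$ at time $s_{i-1}$ and coupled to the subsequent evolution of the toy model as in Lemma \ref{toytobrw}. Let $W^{(i)}$ denote the position at time $s_i$ of the rightmost particle of $Z^{(i)}$, measured from its starting point. Lemma \ref{toytobrw} gives $Y_1^M(s_i)\ge Y_1^M(s_{i-1})+W^{(i)}$, and since each $Z^{(i)}$ is sampled freshly, the strong Markov property of the toy model makes $(B_M^{(i)},W^{(i)})_{i\ge 1}$ i.i.d.\ with common law $(B_M,Z_{\max}(B_M))$. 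Both expectations $E[B_M]=\sum_{k=1}^M 1/k$ and $E[Z_{\max}(B_M)]$ are finite (in particular $|W^{(i)}|\le M$, the maximum tree depth), so the SLLN gives $s_n/n\to E[B_M]$ and $n^{-1}\sum_{i=1}^n W^{(i)}\to E[Z_{\max}(B_M)]$ almost surely. The toy model's rightmost particle is nondecreasing in $t$ (it is never the one deleted, and displacements from other births can only push it further right), so $Y_1^M(t)\ge Y_1^M(s_n)$ on $[s_n,s_{n+1}]$, and $s_{n+1}/s_n\to1$ a.s. Sandwiching yields
\[
\liminf_{t\to\infty}\frac{Y_1^M(t)}{t}\;\ge\;\frac{E[Z_{\max}(B_M)]}{E[B_M]}.
\]

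For a matching upper bound, let $Z'$ be the branching random walk started from $M$ particles at the toy model's initial positions, with every toy-model birth coupled to a birth of $Z'$ from the same parent with the same displacement. Since $Z'$ performs no deletions, the toy model's particle set is a subset of $Z'$'s at all times, so $Y_1^M(t)\le Z'_{\max}(t)$. Biggins' theorem gives $Z'_{\max}(t)/t\to a$ almost surely, whence $\limsup_t Y_1^M(t)/t\le a$, and combining with the renewal bound yields
\[
\frac{E[Z_{\max}(B_M)]}{E[B_M]}\;\le\;a\quad\text{for every }M.
\]

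To close the gap, write
\[
\frac{E[Z_{\max}(B_M)]}{E[B_M]}\;=\;E\!\left[\frac{Z_{\max}(B_M)}{B_M}\cdot\frac{B_M}{E[B_M]}\right],
\]
a product of two nonnegative random variables. Biggins' theorem and $B_M\to\infty$ a.s.\ give $Z_{\max}(B_M)/B_M\to a$ a.s.; meanwhile $B_M=\sum_{k=1}^M T_k$ with $T_k\sim\mathrm{Exp}(k)$ independent has $\mathrm{Var}(B_M)\le\pi^2/6$, so Kolmogorov's convergence theorem makes $B_M-E[B_M]$ a.s.\ convergent, hence $B_M/E[B_M]\to1$ a.s. Fatou's lemma then yields $\liminf_M E[Z_{\max}(B_M)]/E[B_M]\ge a$, which combined with the upper bound of the previous step proves $E[Z_{\max}(B_M)]/E[B_M]\to a$. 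The main technical obstacle is precisely this last step: the pointwise limit $Z_{\max}(B_M)/B_M\to a$ alone only controls $E[Z_{\max}(B_M)/B_M]$, not the ratio of expectations, and a direct uniform-integrability estimate for the product would require extra work; borrowing the upper bound from the BRW comparison and applying Fatou in the other direction sidesteps this entirely.
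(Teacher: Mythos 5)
Your proof is correct, and while the renewal lower bound (iterate the coupling of Lemma \ref{toytobrw} at the stopping times $s_i$ and apply the strong law) is essentially the paper's argument, your treatment of the limit $E[Z_{\max}(B_M)]/E[B_M]\to a$ is genuinely different and cleaner. The paper proves this by dominated convergence: it shows $Z_{\max}(B_M)/E[B_M]\to a$ a.s.\ and then establishes the moment bounds \eqref{goal1}--\eqref{goal2} to produce an integrable dominating variable $\sup_M Z_{\max}(B_M)/E[B_M]$, which accounts for the bulk of the proof. You avoid that entirely by splitting the two inequalities: the inequality $E[Z_{\max}(B_M)]/E[B_M]\le a$ drops out for free by chaining the renewal lower bound with the subset coupling of the toy model inside an undeleted BRW started from the same $M$ initial particles (so that $\limsup_t Y_1^M(t)/t\le a$ a.s.\ by Biggins applied to each of the $M$ ancestral lines); and the reverse inequality $\liminf_M E[Z_{\max}(B_M)]/E[B_M]\ge a$ is Fatou applied to the nonnegative sequence $Z_{\max}(B_M)/E[B_M]\to a$ a.s.\ (nonnegativity is legitimate since the ancestral particle sits at $0$ forever, so $Z_{\max}(t)\ge 0$). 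The a.s.\ convergence $B_M/E[B_M]\to 1$ you justify via Kolmogorov's convergence theorem with $\operatorname{Var}(B_M)\le\pi^2/6$, which the paper merely asserts; this is fine. In short, you trade the paper's uniform-integrability estimates for a one-line upper bound plus Fatou, which is a genuine simplification of the proof's second half.
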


\begin{proof}
Let $T_{k,1} = T_k$ where $T_k$ are as in Lemma \ref{toytobrw} and for $j >1$, let $T_{k,j}$, $k \leq M$ denote the time of the $k^{th}$ birth in a BRW started with a single particle at $Y_1^M(T_{M,j-1})$ at time $T_{M,j-1}$ and let $\zeta_{M,j}$ denote the position of the rightmost particle at time $T_{M,j}$. Repeatedly applying the comparison in Lemma \ref{toytobrw} and letting yields
\begin{align*}
\frac{Y_1^M(t)}{t} &\geq \frac{\sum_{j: \, T_{M,j} \leq t} (Y_1^M(T_{M,j+1}) - Y_1^M(T_{M,j}))}{t} \\
&\geq  \frac{\sum_{j: \, T_{M,j} \leq t} (\zeta_{M,j+1} - Y_1^M(T_{M,j}))}{t}.
\end{align*}
But the time intervals $T_{M,j+1}-T_{M,j}$ are iid with mean $E B_M$ so the first part of the result follows from the renewal theorem. To prove the second part, we note that Biggins' result implies
$$
Z_{\max}(B_M)/B_M \rightarrow a \quad\hbox{almost surely.}
$$
Since $B_M = \xi_1 + \cdots + \xi_M$ where the $\xi_i$ are independent exponentials with mean $1/i$, it is easy to see that $B_M/EB_M \to 1$, so
$$
Z_{\max}(B_M)/EB_M \rightarrow a \quad\hbox{almost surely.}
$$
Therefore, the result will follow from the dominated convergence theorem if we can show that
\beqx
E\left( \sup \frac{Z_{\max}(B_M)}{EB_M} \right)< \infty.
\eeqx
By the Cauchy Schwarz inequality, it suffices to show
\beq \label{goal1}
E \left(\sup_{t \geq 1}\frac{Z_{\max}(t)}{t}\right)^2 < \infty
\eeq
and
\beq \label{goal2}
E \left(\sup \frac{B_M}{EB_M}\right)^2 < \infty.
\eeq
To prove \eqref{goal1}, we note that \eqref{meanmeas} and \eqref{ldub} imply that
$$
P( Z_{\max}(t) > x t ) \le e^{t(1+\Lambda(x))}
$$
and since $\Lambda$ is concave with $\Lambda(0)=0$ and $\Lambda(a)=-1$ with $a< 1$, it follows that for $x\ge 1$
$$
P( Z_{\max}(t) > x t ) \le e^{t(1-x)}
$$
Now if $Z_{\max}(t)/t > 2x $ for some $t$, then since $Z_{\max}(t)$ is non-decreasing, we must have $Z_{\max}(s)/s > x$ for some $s \in [t,t+1]$ and therefore, summing over all integers $t$ from 1 to $\infty$, we see that if $x >2$
\beqx
P\left( \sup_{t \ge 1} Z_{\max}(t) / t  > 2x \right) \le e^{1-x}
\eeqx
which proves \eqref{goal1}. To prove \eqref{goal2}, we note that $EB_M = \sum_{i=1}^M 1/i$ and
$$
E\exp(\theta B_M ) = \prod_{i=1}^M \frac{1}{1-\theta/i}
$$
for $0 < \theta < 1$, so by Chebyshev's inequality,
$$
P( B_M > y EB_M ) \le \exp\left( -\theta y \sum_{i=1}^M \frac{1}{i} - \sum_{i=1}^M \log(1 - \theta/i) \right)
$$
Taking $\theta=1/2$ and choosing $c$ so that $\log(1-x) \ge - x - c x^2$ when $0< x < 1/2$, we have
\begin{align*}
P( B_M > y EB_M ) & \le \exp\left( \sum_{i=1}^M \frac{1}{2i} (1-y)  + \frac{c}{4 i^2} \right) \\
& \le C \exp\left( \frac{1-y}{2} \log (M+1) \right) = C (M+1)^{(1-y)/2}
\end{align*}
Therefore if $y>3$,
\beqx
\sum_{M=2}^\infty (M+1)^{(1-y)/2} \le \int_2^\infty x^{(1-y)/2} \, dy = \frac{2^{(3-y)/2}}{(y-3)/2}
\eeqx
which yields \eqref{goal2}, completing the proof.
\end{proof}

\eqref{lowerbndgoal} follows from Lemma \ref{edgetotoy} and \ref{toy2} which completes the proof that the speed of the rightmost particle is $a$. We shall complete the proof of Theorem \ref{alphafixedmain} in the next section by showing the speed of the leftmost particle is $b$, but first we pause to prove Corollary \ref{fixdeltspeed}.

\mn
{\it Proof of Corollary \ref{fixdeltspeed}.} Suppose we choose $\ep$ small enough so that $\ep M(M-1)/2 < r$. Using the coupling in Lemma \ref{toytobrw} we can use the particles $\zeta_j^k$, $j \le k \le M$, from the branching random walk started at $X_{max}$ to get a lower bound on the right-most $k \le M$ particles in the predator evolution with fixed $\delta$. An induction argument shows that the spacings between the corresponding particles in the predator evolution are $\le \ep$ at all times. Since we have assumed $\ep \sum_{j=1}^{M-1} j < r$ the right-most $k \le M$ particles are never killed. The remainder of the proof is the same as before. \eopt

\subsection{Asymptotics for $X_{min}$}

In order to get the speed of the leftmost particle, we will need the following result on a branching random walk with killing which is an adaptation of Biggins \cite{JB1}, Theorems 1 and 2, which proves this result without killing.

\begin{lemma} \label{killedBRW}
Let $Z_t(\gamma,A)$ denote the number of particles in $A$ under a branching random walk with birth rate one, displacements uniform on $[-1,1]$, killing to the left of $-K+\gamma t$, and started with one particle at 0. Then for any $c > \gamma$ on the set of nonextinction
\beq \label{ldp}
\lim \frac{1}{t} \log Z_t(\gamma,[ct,\infty)) = I(c)
\eeq
where $I(c) = 1+\Lambda(c)$, and the probability of extinction tends to 0 as $K\to\infty$.
\end{lemma}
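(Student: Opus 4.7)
The plan is to prove the three claims of the lemma in turn, adapting the arguments of Biggins \cite{JB1} with the killing barrier handled by a path-estimate on the underlying random walk.

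\emph{Upper bound.} Since killing can only remove particles, $Z_t(\gamma,[ct,\infty)) \le Z_t^{*}([ct,\infty))$, where $Z_t^{*}$ denotes the unkilled BRW. The many-to-one identity \eqref{meanmeas} combined with \eqref{ldub} yields $E Z_t^{*}([ct,\infty)) \le e^{t(1+\Lambda(c))} = e^{tI(c)}$. For any $\eta > 0$, Markov's inequality and the Borel--Cantelli lemma applied along the integer times $t_n = n$ give $Z_n(\gamma,[cn,\infty)) \le e^{n(I(c)+\eta)}$ eventually a.s.\ A routine interpolation between integer times (using that only finitely many new particles are born in a unit interval and that $ct$ is continuous in $t$) upgrades this to $\limsup_t t^{-1}\log Z_t(\gamma,[ct,\infty)) \le I(c)$ a.s.

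\emph{Lower bound.} Fix $c \in (\gamma, a)$ and let $\theta^* > 0$ be the maximizer in \eqref{ldasy}, so that $\theta^* c - \phi(\theta^*) = -\Lambda(c)$. Let $\widetilde Z_t$ count those particles in $[ct,(c+\eta)t]$ at time $t$ whose entire ancestral path stays strictly above $-K + \gamma s$. The many-to-one identity gives
$$
E \widetilde Z_t = e^t\, P\!\bigl( S_t \in [ct,(c+\eta)t],\ S_s > -K + \gamma s \text{ for all } s \le t\bigr).
$$
Applying the Esscher tilt of parameter $\theta^*$ converts $S_t$ into a random walk of mean drift $c$, under which the path is concentrated near the straight line of slope $c$; since $c > \gamma$, a classical ballot/random-walk-above-a-line estimate gives that the constraint $S_s > -K + \gamma s$ holds with probability at least $p_K > 0$, uniformly in $t$, with $p_K \to 1$ as $K\to\infty$. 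Undoing the tilt yields $E\widetilde Z_t \ge c_K\, e^{tI(c)}$. A parallel second-moment estimate, carried out exactly as in Biggins' proof but restricted to particles with admissible ancestral paths (the covariance terms are dominated by the corresponding unrestricted BRW sum, which Biggins controls via $L^p$-convergence of the additive martingale $W_t(\theta^*)$), gives $E\widetilde Z_t^{\,2} \le C e^{2tI(c)}$. Paley--Zygmund then produces $\widetilde Z_t \ge e^{t(I(c)-\varepsilon)}$ on a set of probability bounded below by $c_K^2/C$. A branching iteration (start afresh from each surviving descendant) promotes this to an a.s.\ statement on the survival event.

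\emph{Extinction probability.} The same path estimate gives the extinction result. Pick any $c \in (\gamma,a)$. In the unkilled BRW, Biggins' theorem gives $Z_t^{*}([ct,\infty)) \ge e^{t I(c)/2}$ for all large $t$ almost surely. Each such particle has, independently of everything to its right, ancestral-path-survival probability bounded below by $p_K$ (again by the tilted random-walk estimate, since on $\{S_t \ge ct\}$ the path behaves like a slope-$c$ walk). Hence the expected number of admissible particles contributing to $Z_t(\gamma,[ct,\infty))$ is at least $p_K e^{tI(c)/2}$; choosing $K$ large makes $p_K$ close to 1, so with probability at least $p_K$ the killed process never dies out.

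The main obstacle is the second-moment bound in the lower-bound step: one must verify that conditioning on the ancestral path staying above $-K + \gamma s$ does not inflate the covariance between two particles with a distant common ancestor. This is handled by factoring the joint event at the splitting time of the two lineages and invoking the $L^2$-boundedness of Biggins' additive martingale, which is the delicate point. Everything else is standard large-deviations and renewal bookkeeping.
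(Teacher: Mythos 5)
Your proof is a genuinely different route from the paper's, and the comparison is worth spelling out. The paper follows Biggins' original strategy: define an embedded Galton--Watson process $Z^k_m$ whose $m$-th generation consists of the particles at time $mk$ that are at least $ck$ to the right of their generation-$(m-1)$ ancestor, then note $(|Z^k_m|)^{1/m}\to E|Z^k_1|$ on survival, with $(1/k)\log E|Z^k_1| \to I(c)$ by the many-to-one and large-deviations identities. The killed version $\bar Z^k_m$ is handled by the observation that generation-$m$ particles in this construction sit at height $\geq cmk$, while the kill line sits at $-K+\gamma mk$ with $\gamma < c$, so the additional killing constraint becomes negligible as $m\to\infty$ and the same GW comparison goes through. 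This avoids any second-moment computation. Your proposal instead uses a first-moment/second-moment argument with the Esscher tilt at $\theta^*$ and a Paley--Zygmund step, followed by a branching bootstrap. This is a perfectly legitimate modern route and will give the result, but you have correctly identified the delicate point: the second-moment bound $E\widetilde Z_t^{\,2} \le C e^{2tI(c)}$ with the ancestral-path constraint requires a genuine argument (factoring at the branch point and controlling the contribution of early splits) and is not simply inherited from $L^2$-boundedness of the additive martingale; in fact Biggins' 1977 paper cited here does not use $L^2$ martingale convergence at all. The embedded-GW route sidesteps this entirely, which is why the paper uses it.

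One additional caution on your extinction-probability argument: having expected number of path-admissible particles $\ge p_K\, e^{tI(c)/2}$ does not by itself give survival probability $\ge p_K$ — you again need a second-moment or GW-comparison step, since the particles' admissibility events are highly correlated through shared ancestry, and your claim of independence "of everything to its right" is not justified. The cleanest fix in your framework is to run the Paley--Zygmund argument once at a fixed large time $t_0$ (where the process has many particles well to the right of the barrier) and then bootstrap; in the paper's framework, one notes that for $K$ large the BRW will, before the barrier is relevant, contain many particles each of which launches an independent supercritical killed GW.
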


\begin{proof}
Theorem 2 in Biggins along with (\ref{meanmeas}) and (\ref{ldasy}) yields \eqref{ldp} in the case of no killing and since $Z_t(\gamma,[ct,\infty)) \subset Z_t([ct,\infty))$, we get the upper bound in \eqref{ldp}. To get the lower bound, we recall that to prove the corresponding lower bound for the process without killing, Biggins lets $Z^k_{m+1}$ be the points at time $(m+1)k$ that are at least $k c$ units to the right of their ancestor in $Z^k_{m}$ at time $mk$. $|Z^k_m|$ is a branching process with offspring distribution $|Z^k_1|$ so $(|Z^k_m|)^{1/m} \to E|Z^k_1|$ on the nonextinction set. Combining (\ref{meanmeas}) and (\ref{ldasy}) implies $(1/k) \log E|Z^k_1| \to I(c)$ which yields the desired lower bound.

To extend this construction to the process with killing, let $\bar Z^k_{m+1}$ be the points at time $(m+1)k$ that are at least $k c$ units to the right of their ancestor in $\bar Z^k_{m}$ at time $mk$ and are not killed by going to the left of $-K + \gamma t$ of $mk \le t \le (m+1)k$. By construction, all points in $\tilde{Z}_m^k$ are $ \geq cmk$ and we have chosen $\gamma < c$ so for large $m$, the killing has little effect and on the set of non-extinction we have
$$
\frac{1}{m} \log |\bar Z^k_m| \to \log E|Z^k_1|.
$$
Using (\ref{meanmeas}) and (\ref{ldasy}) again gives the desired lower bound.
\end{proof}

With this result in hand, we can complete the

\mn
{\it Proof of $X_{min}(t)/t \to b$.} When $X_{min}(t)$ increases we must have
$$
N_t e^{-X_{\min}(t)} \geq r.
$$
Since the particles in $X$ are a subset of the particles in the branching random walk, it follows that if $X_{\min}(t) \geq (b+\ep)t$,
$$
N_t e^{-X_{\min}(t)} \leq Z_t([(b+\ep)t,\infty))e^{-(b+\ep)t }\to 0
$$
as $t \rightarrow \infty$ since $I(c) < c$ for all $c > b$. Therefore, $\limsup X_{\min}(t)/t \leq b$ a.s.

To prove that  $\liminf X_{\min}(t)/t \geq b$ a.s., let $c \in (b,a)$ and $\ep > 0$. Choose $K$ large enough so that the probability of extinction in the branching random walk with killing at $-k + bt$ is less than $\ep$ for all $k \geq K$ and then take $T$ large enough so that $X_1(t)\geq c t$, for all $t \geq T$ (which is possible since $\lim X_1(t)/ t = a$) and so that $b T > K$. Suppose that $ X_{\min}(t) \leq (b- \rho)t$ for some $\rho > 0$. Then by comparing with a branching random walk with killing at $-X_1(T)+b t$, we have
\begin{align}
F(t) &:= e^{-X_{\min}(t)} \sum_{j=1}^{N_t} (1-e^{-X_j(t)/X_{\min}(t)}) \label{thisguy} \\
& \geq e^{-(b-\ep)t} (1-e^{-(c-b+\ep)t})Z_t(b,[ct,\infty)). \nonumber
\end{align}
But on the non-extinction set (which has probability at least $1-\ep$), we have
\begin{align*}
\lim \frac{1}{t} \log [e^{-(b-\ep)t} (1-e^{-(c-b+\ep)}t)Z_t(\gamma,[ct,\infty))] = I(c) - b + \ep \to \ep > 0
\end{align*}
as $c \downarrow b$ and therefore, we must have $ X_{\min}(t) > (b- \ep)t$ eventually or there would exist a sequence of points $t_i \to \infty$ for which $F(t_i) \to \infty$, contradicting \eqref{contscond}. Therefore, $P(\liminf X_{\min}(t)/t < b) < \ep$ and since $\ep$ is arbitrary, this proves the result.

To conclude that $\liminf_{t\to\infty} (\log N_t)/t \ge b$ a.s., note that if $\ep>0$ then for large times there are at least $\exp((I(c)-\ep)t)$ points of $X$ to the left of $ct$. Picking $c$ close to $b$ and $\epsilon$ small gives the desired result.  \eopt

\clearpage

%\begin{figure}[tbp]
%  \centering
%\includegraphics[width=2.5in,keepaspectratio]{preyevole}
% \caption{Simulation of prey evolution with $\ep=0.01$ when $\alpha=4$, $\beta=2$, and $\delta=\gamma=1$.}
%  \label{preyevolvefig}
%\end{figure}

\begin{figure}[tbp] % float placement: (h)ere, page (t)op, page (b)ottom, other (p)age
  \centering
  % file name: F:/predprey/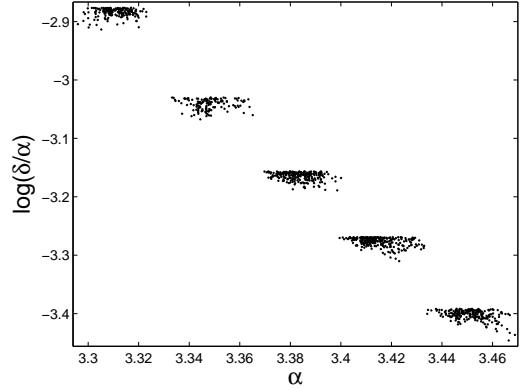
  \includegraphics[width=3in,keepaspectratio]{2Dscatter}
\caption{The five clusters, from upper left to lower right, are the characteristics of the coexisting predators in a sample run of the Predator EP after $10^4, 1.25 \times 10^4, 1.5\times 10^4, 1.75\times 10^4$ and $2\times 10^4$ mutations have occurred. The consumption rates, $\alpha$, of all coexisting predators are plotted on the $x$-axis and the corresponding values of $\log \ell = \log(\delta/\alpha)$ are plotted on the $y$-axis. Parameters: $r=1$, $\alpha(0)=3$, $\delta(0)=.45$, $\ep=.01$.}
\label{2Dscatter}
\end{figure}

\begin{figure}[tbp] % float placement: (h)ere, page (t)op, page (b)ottom, other (p)age
  \centering
  % file name: F:/predprey/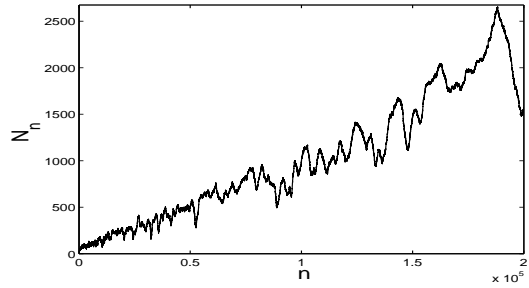
  \includegraphics[height = 1.6 in, width=3 in]{2Dnumber}
\caption{Plot of $N_n$ = number of coexisting species in the population after the $n^{th}$ mutation has occurred in the Predator EP from Figure \ref{2Dscatter}.}
\label{2Dnumber}
\end{figure}

\begin{figure}[tbp] % float placement: (h)ere, page (t)op, page (b)ottom, other (p)age
  \centering
  % file name: F:/predprey/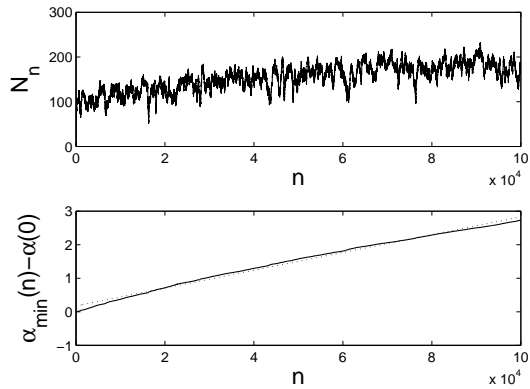
  \includegraphics[height = 2.2in, width=3in]{eps01alp3}
\caption{Top: Plot of the number of coexisting predators at time $n$ in the APEP with $\ep = .01,$ $r=1$, $\alpha(0)=3$. Bottom: Plot of the change in $\alpha_{\min}(n)$ for the same simulation.}
\label{eps01alp3}
\end{figure}

\begin{figure}[tbp] % float placement: (h)ere, page (t)op, page (b)ottom, other (p)age
  \centering
  % file name: F:/predprey/verysmalleps2.EPS
  \includegraphics[height = 2.2in, width=3in]{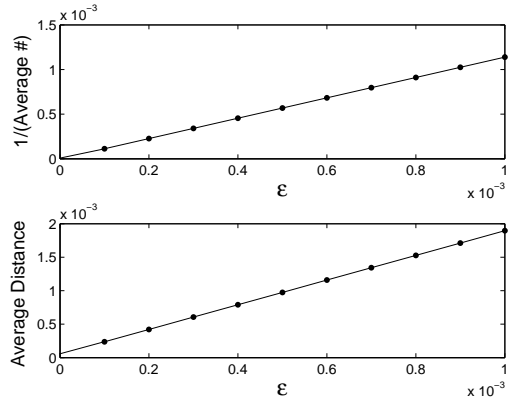}
\caption{The top panel shows the inverse of the average number of species and the bottom panel shows the average maximum distance between $\alpha$'s as a function of $\ep$ for the APEP. Here, we have run one simulation for each value of $\ep=.001,.002,\dotso,.01$ with $r=1$, $\alpha(0)=3$ and then averaged out the results of each simulation over the last 25,000 time steps to obtain the values for the plotted points. The solid lines are the corresponding least square lines. It appears that the number of coexisting species is $O(1/\ep)$ and the maximum distance between coexisting types is $O(\ep)$ as $\ep \to 0$.} \label{verysmalleps}
\end{figure}

\begin{figure}[tbp] % float placement: (h)ere, page (t)op, page (b)ottom, other (p)age
  \centering
  % file name: F:/predprey/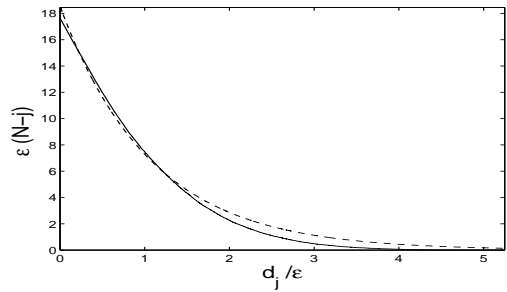
  \includegraphics[height = 1.6 in, width=3 in]{alpden}
\caption{Plot of the distribution of predator types for a single run of the APEP at time $n=50,000$ with $\ep=.001$ and $r=1$. The solid line connects the points $(d_j/\ep, \ep(N_n-j))$, $j=1,2,\dotso N_n = 17626$. The dashed line gives an exponential approximation.}
\label{alphaden}
\end{figure}

\begin{figure}[tbp]
\centering
\includegraphics[height = 1.6 in, width=3 in]{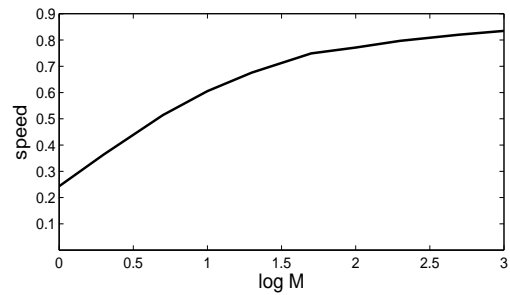}
\caption{Graph of the speed $a_M$ versus $\log M$ showing slow convergence to the limit $a \approx 0.9053$ for the finite branching-selection particle system $Y^M$ defined in Section \ref{fixedalphaintro}.}
\label{aMslow}
\end{figure}

\begin{figure}[tbp] % float placement: (h)ere, page (t)op, page (b)ottom, other (p)age
  \centering
  % file name: F:/predprey/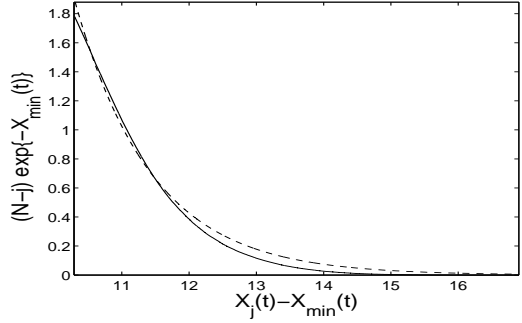
  \includegraphics[height = 1.8 in, width=3 in]{Xdens}
\caption{Plot of the distribution of predator types for a single run of the DPEP with $r=1$ at time $t \approx 20.25$ (after $n=50,000$ insertions). The solid line shows the point $(X_j(t)-X_{\min}(t),(N_t-j) e^{-X_{\min}(t)})$, $j=1,...,N_t =25467$. The dashed line gives an exponential approximation.}
\label{Xdens}
\end{figure}

\begin{figure}[h] % float placement: (h)ere, page (t)op, page (b)ottom, other (p)age
  \centering
  % file name: F:/predprey/preyinvadepic2.EPS
  \includegraphics[height = 2.25 in , width=4 in]{}
\caption{Plot of the invadability curves for $(\alpha_1, \beta_1) = (2,4)$. For the predator, we set $\delta =1$. The dashed line shows the boundary of the viable region.}
\label{preyinvadepic}
\end{figure}

\end{document}